\documentclass[11pt,a4paper]{amsart}
\usepackage[T1]{fontenc}
\usepackage[utf8]{inputenc}
\usepackage{amssymb,url,upref,verbatim,xspace,mathrsfs}
\usepackage{mathtools}
\mathtoolsset{showonlyrefs}
\RequirePackage[%
pdfview=XYZ,
pdfstartview=FitV,
pdffitwindow,
hyperfootnotes=false,
raiselinks=true,
hypertexnames=false,
colorlinks=true,
bookmarks=true,
bookmarksopen=false,
anchorcolor=black,
citecolor=black,
linkcolor=black,
filecolor=black,
urlcolor=black]{hyperref}

\RequirePackage[all,ps,cmtip]{xy}
\newdir^{ (}{{}*!/-5pt/\dir^{(}}
\newdir_{ (}{{}*!/-5pt/\dir_{(}}
\RequirePackage[mathscr]{eucal}
\usepackage{mathrsfs}\let\mathcal\mathscr
\setlength{\arraycolsep}{1.5pt}
\setcounter{tocdepth}{2}
\emergencystretch2em
\multlinegap0pt

\newcounter{toto}
\def\thetoto{\arabic{toto}}
\let\oldmarginpar\marginpar
\def\marginpar#1{\refstepcounter{toto}\textsuperscript{\textup{[\thetoto]}}\oldmarginpar{\footnotesize\textsuperscript{[\thetoto]}\,#1}}

\theoremstyle{plain}
\newtheorem{theo}{Theorem}
\newtheorem{coro}[theo]{Corollary}

\newtheorem{theorem}{Theorem}[section]
\newtheorem{proposition}[theorem]{Proposition}
\newtheorem{lemma}[theorem]{Lemma}
\newtheorem{corollary}[theorem]{Corollary}

\theoremstyle{definition}
\newtheorem{definition}[theorem]{Definition}
\newtheorem{remark}[theorem]{Remark}
\newtheorem{example}[theorem]{Example}
\newtheorem*{convnot}{Convention and notation}

\def\mainmatter{\renewcommand{\baselinestretch}{1.1}\normalfont}
\def\backmatter{\renewcommand{\baselinestretch}{1}\normalfont}

\makeatletter
\@namedef{subjclassname@2010}{\textup{2010} Mathematics Subject Classification}
\def\l@section{\@tocline{1}{0pt}{0pc}{}{}}
\def\l@subsection{\@tocline{2}{0pt}{1.5pc}{}{}}
\def\l@subsubsection{\@tocline{3}{0pt}{2pc}{}{}}
\makeatother

\newcommand{\C}{\mathbb{C}}\let\CC\C
\newcommand{\N}{\mathbb{N}}\let\NN\N
\newcommand{\R}{\mathbb{R}}

\let\ZZ\Z
\newcommand{\perv}{\mathrm{perv}}
\newcommand{\bD}{\boldsymbol{D}}
\newcommand{\shhom}{\mathcal{H}\!\mathit{om}}\let\ho\shhom
\DeclareMathOperator{\rh}{\mathit{R}\shhom}\let\Rhom\rh
\DeclareMathOperator{\tho}{\mathit{T}\shhom}
\DeclareMathOperator{\RH}{RH}
\let\TH\THH
\newcommand{\rb}{\mathrm{b}}
\newcommand{\rd}{\mathrm{d}}
\newcommand{\lc}{\mathrm{lc}}

\newcommand{\coh}{\mathrm{coh}}
\newcommand{\hol}{\mathrm{hol}}
\newcommand{\rhol}{\mathrm{rhol}}
\newcommand{\Mod}{\mathsf{Mod}}
\newcommand{\sa}{\mathrm{sa}}
\newcommand{\tf}{\mathrm{tf}}
\newcommand{\rt}{\mathrm{t}}
\newcommand{\rhog}{\boldsymbol{\rho}}
\renewcommand{\thetag}{\boldsymbol{\theta}}
\newcommand{\cc}{{\C\textup{-c}}}
\newcommand{\rc}{{\R\textup{-c}}}
\newcommand{\XS}{X\times S}
\newcommand{\XT}{X\times T}
\newcommand{\XpS}{X'\times S}
\newcommand{\YpS}{Y'\times\nobreak S}
\newcommand{\XsS}{X^*\times S}
\newcommand{\YS}{Y\times\nobreak S}

\newcommand{\US}{U\times S}
\newcommand{\DX}{\shd_X}
\newcommand{\DXS}{\shd_{\XS/S}}

\newcommand{\DXpS}{\shd_{\XpS/S}}

\newcommand{\DYS}{\shd_{Y\times S/S}}
\newcommand{\DUS}{\shd_{U\times S/S}}
\newcommand{\DXT}{\shd_{X\times T/T}}

\DeclareMathOperator{\Char}{Char}

\DeclareMathOperator{\codim}{codim}

\DeclareMathOperator{\pD}{{}^\mathrm{p}\mathsf{D}}
\DeclareMathOperator{\rD}{\mathsf{D}}

\DeclareMathOperator{\DR}{DR}
\DeclareMathOperator{\Db}{\mathfrak{Db}}

\DeclareMathOperator{\Hom}{Hom}
\DeclareMathOperator{\id}{Id}\let\Id\id
\let\im\imm
\DeclareMathOperator{\Op}{Op}

\DeclareMathOperator{\Sol}{Sol}
\DeclareMathOperator{\pSol}{{}^\mathrm{p}Sol}

\DeclareMathOperator{\supp}{Supp}
\let\tilde\widetilde
\let\wt\widetilde

\let\epsilon\varepsilon

\let\setminus\smallsetminus
\let\leq\leqslant
\let\geq\geqslant

\def\loccit{loc.\kern3pt cit.{}\xspace}
\def\cf{cf.\kern.3em}
\def\eg{e.g.\kern.3em}
\def\ie{i.e.,\ }
\def\resp{\text{resp.}\kern.3em}
\let\moins\smallsetminus
\let\oldvee\vee
\renewcommand{\vee}{{\scriptscriptstyle\oldvee}}

\newcommand{\wtj}{\wt\jmath}
\newcommand{\Df}{{}_{\scriptscriptstyle\mathrm{D}}f}

\newcommand{\Di}{{}_{\scriptscriptstyle\mathrm{D}}i}
\newcommand{\Ddelta}{{}_{\scriptscriptstyle\mathrm{D}}\delta}
\newcommand{\Dpi}{{}_{\scriptscriptstyle\mathrm{D}}\pi}
\newcommand{\cbbullet}{{\raisebox{1pt}{$\sbullet$}}}
\newcommand{\sbullet}{{\scriptscriptstyle\bullet}}
\newcommand{\pOS}{p^{-1}\sho_S}

\newcommand{\pOXS}{p^{-1}_X\sho_S}
\newcommand{\pOXT}{p^{-1}_X\sho_T}
\newcommand{\pOYS}{p^{-1}_Y\sho_S}

\def\sha{\mathcal{A}}

\def\shc{\mathcal{C}}
\def\shd{\mathcal{D}}

\def\shf{\mathcal{F}}\let\cF F
\def\shg{\mathcal{G}}\let\cG G
\def\shh{\mathcal{H}}

\def\shl{\mathcal{L}}
\def\shm{\mathcal{M}}
\def\shn{\mathcal{N}}
\def\sho{\mathcal{O}}

\def\shs{\mathcal{S}}
\def\sht{\mathcal{T}}

\newcommand{\RedefinitSymbole}[1]{%
\expandafter\let\csname old\string#1\endcsname=#1
\let#1=\relax
\newcommand{#1}{\csname old\string#1\endcsname\,}%
}
\RedefinitSymbole{\forall} \RedefinitSymbole{\exists}

\def\to{\mathchoice{\longrightarrow}{\rightarrow}{\rightarrow}{\rightarrow}}
\def\mto{\mathchoice{\longmapsto}{\mapsto}{\mapsto}{\mapsto}}
\def\hto{\mathrel{\lhook\joinrel\to}}

\def\To#1{\mathchoice{\xrightarrow{\textstyle\kern4pt#1\kern3pt}}{\stackrel{#1}{\longrightarrow}}{}{}}

\def\isom{\stackrel{\sim}{\longrightarrow}}

\let\oldbigoplus\bigoplus
\renewcommand{\bigoplus}{\mathop{\textstyle\oldbigoplus}\displaylimits}
\let\oldbigwedge\bigwedge
\renewcommand{\bigwedge}{\mathop{\textstyle\oldbigwedge}\displaylimits}
\let\oldbigcap\bigcap
\renewcommand{\bigcap}{\mathop{\textstyle\oldbigcap}\displaylimits}
\let\oldbigcup\bigcup
\renewcommand{\bigcup}{\mathop{\textstyle\oldbigcup}\displaylimits}

\begin{document}
\author{Luisa Fiorot}
\address[L. Fiorot]{Dipartimento di Matematica ``Tullio Levi-Civita'' Universit\`a degli Studi di Padova\\
Via Trieste, 63\\
35121 Padova Italy}
\email{luisa.fiorot@unipd.it}

\author{Teresa Monteiro Fernandes}
\address[T. Monteiro Fernandes]{Centro de Matem\'atica e Aplica\c{c}\~{o}es Funda\-men\-tais-CIO and Departamento de Matem\' atica da Faculdade de Ci\^en\-cias da Universidade de Lisboa, Bloco C6, Piso 2, Campo Grande, 1749-016, Lisboa
Portugal}
\email{mtfernandes@fc.ul.pt}

\author{Claude Sabbah}
\address[C.~Sabbah]{CMLS, CNRS, École polytechnique, Institut Polytechnique de Paris, 91128 Palaiseau cedex, France}
\email{Claude.Sabbah@polytechnique.edu}

\title[Relative regular Riemann-Hilbert correspondence]{Relative Regular Riemann-Hilbert correspondence}

\thanks{The research of L.\,Fiorot was supported by project PRIN 2017 Categories, Algebras: Ring-Theoretical and Homological Approaches (CARTHA). The research of T.\,Monteiro Fernandes was supported by Funda\c c\~ao para a Ci\^encia e a Tecnologia, UID/MAT/04561/2019.}

\keywords{relative $\mathcal D$-module, De Rham functor, regular holonomic $\mathcal D$-module}

\subjclass[2010]{14F10, 32C38, 35A27, 58J15}

\begin{abstract}
On the product of a complex manifold $X$ by a complex curve $S$ considered as a parameter space, we show a Riemann-Hilbert correspondence between regular holonomic relative $\mathcal D$-modules (resp.\ complexes) on the one hand and relative perverse complexes (resp.\ $S$\nobreakdash-$\mathbb{C}$-constructible complexes) on the other hand.
\end{abstract}

\maketitle
\tableofcontents
\mainmatter
\section*{Introduction}

Let $X$ and $T$ be complex manifolds. Relative $\shd$-modules are modules over the sheaf $\DXT$ of differential operators relative to the projection $p_X:\XT\to T$. \emph{Holonomic} $\DXT$-modules encode holomorphic families of holonomic $\shd_X$-modules parametrized by $T$ whose characteristic variety is contained in a fixed Lagrangian subset $\Lambda\subset T^*X$. This is a strong condition avoiding confluence phenomena. It describes nevertheless the generic behaviour of a deformation of a holonomic $\shd_X$-module. From this point of view, it is natural to emphasize those $\DXT$-modules which are $\pOXT$-flat, that we call \emph{strict}.

There exists a solution functor from the category of \emph{holonomic} $\DXT$-modules to that of perverse $\CC$-constructible complexes of $\pOXT$-modules, objects defined in \cite{MFCS1}. Strictness (\ie flatness) on the $\DXT$-module side corresponds to the property that the Verdier dual of a perverse complex (as~a complex of $\pOXT$-modules) is also perverse.

In their previous work \cite{MFCS2}, Monteiro Fernandes and Sabbah have introduced the notion of relative regular holonomic $\DXT$-modules. For example, if $T=\CC^*$, a $\DXT$-module underlying a regular mixed twistor $\shd$-module (\cf \cite{Mochizuki11}) is regular holonomic in this sense.

\begin{convnot}
In this article, we mainly consider the case where the parameter space $T$ has dimension one, and we emphasize this assumption by denoting it by $S$. Therefore, throughout this article, $S$ will denote a complex curve. On the other hand, we will set $d_X=\dim X$.
\end{convnot}

Our main result is a Riemann-Hilbert correspondence for regular holonomic $\DXS$-modules, in the following form. We let $\rD^{\rb}_{\rhol}(\DXS)$ denote the full subcategory of $\rD^{\rb}(\DXS)$ consisting of complexes having regular holonomic cohomology modules (\cf Section \ref{S1d} for details) and $\rD^\rb_{\cc}(\pOXS)$ the category of $\CC$-constructible complexes of $\pOXS$-modules (\cf Section \ref {subsec:prelimtop} and \cite{MFCS1}).

\begin{theo} \label{RHH}
The functors
\begin{align*}
\pSol_X: \rD^{\rb}_{\rhol}(\DXS)&\to \rD^\rb_{\cc}(\pOXS) \\
\RH^S_X:\rD^\rb_{\cc}(\pOXS)&\to \rD^{\rb}_{\rhol}(\DXS)
\end{align*}
are quasi-inverse equivalences of categories.
\end{theo}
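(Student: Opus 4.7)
The plan is to reduce Theorem \ref{RHH} to the absolute Riemann-Hilbert correspondence of Kashiwara-Mebkhout by a devissage along the perverse t-structure, combined with a fiberwise analysis over $S$. As preliminaries, I would verify that both functors are well-defined between the stated categories, using the constructibility and regularity results of \cite{MFCS1, MFCS2}, and construct natural adjunction morphisms $\eta\colon \id\Rightarrow \RH^S_X\circ\pSol_X$ and $\epsilon\colon \pSol_X\circ\RH^S_X\Rightarrow\id$ via the tautological pairing involving the sheaf of tempered relative holomorphic functions entering the construction of $\RH^S_X$.

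Next, I would use the perverse t-structures on both derived categories (whose preservation by $\pSol_X$ and $\RH^S_X$ should come with the construction of the functors) to reduce the question of $\eta$ and $\epsilon$ being isomorphisms to their respective hearts: regular holonomic $\DXS$-modules on one side and perverse $\CC$-constructible complexes of $\pOXS$-modules on the other. Within each heart I would further analyze an object through the short exact sequence given by its $\pOS$-torsion subobject and its $\pOS$-torsion-free (strict) quotient. The torsion piece is supported on a finite union of fibers $X\times\{s_i\}$ of $p_X$, and the assertion for it reduces immediately to the absolute Riemann-Hilbert correspondence on each such fiber via Kashiwara's theorem on coherent extensions.

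The substantive case is therefore the strict part. Working locally near each $s_0\in S$, I would use the relative $V$-filtration and the nearby/vanishing cycles machinery introduced in \cite{MFCS2}, combined with faithfully flat base change to the formal disc $\wh{\sho}_{S,s_0}$, to reduce the isomorphism statement to its restriction at $\{s_0\}$, where it is exactly the Kashiwara-Mebkhout theorem. The main obstacle is precisely this step: a fiberwise isomorphism of $\eta$ or $\epsilon$ does not automatically assemble into a relative isomorphism, so one must show that the reconstruction $\RH^S_X\circ\pSol_X$ is \emph{uniform} in the parameter $s$. The relative regular $V$-filtration, which controls how the local monodromy and the formal decomposition vary with $s$, is the tool by which this uniformity is to be secured; once uniformity is in hand, Nakayama-type arguments on the formal disc propagate the fiberwise isomorphism to a genuine relative one, and the patching across $S$ is automatic by sheafiness of the constructions.
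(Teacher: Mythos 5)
Your overall architecture—establish the two natural transformations, check they are isomorphisms, split into torsion and strict pieces, and reduce fiberwise to the absolute Kashiwara--Mebkhout theorem—agrees with the paper in spirit: one direction ($\pSol_X\circ\RH^S_X\simeq\id$) is already in \cite{MFCS2}, and the remaining one is indeed established by checking that a natural morphism becomes an isomorphism fiberwise and applying the Nakayama-type conservativity of the family $(Li_{s_o}^*)_{s_o\in S}$.

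However, you correctly identify the central obstruction—``a fiberwise isomorphism does not automatically assemble into a relative isomorphism''—and then fill it with a black box. The Nakayama arguments of \cite{MFCS1,MFCS2} only apply to complexes whose cohomology fibers are \emph{finitely generated} over $\sho_{S,s}$, so the real work is proving this finite generation, which is exactly the content of Theorem~\ref{Tequivtf}: $\Rhom_{\DXS}(\shm,\RH^S_X(F))\in\rD^\rb_{\cc}(\pOXS)$. You propose to secure uniformity with a ``relative regular $V$-filtration'' and faithfully flat base change to the formal disc, but no such $V$-filtration theory for $\DXS$-modules along the parameter $S$ is developed in the cited literature or in your sketch, and the functors $\RH^S_X$, $\TH^S_X$ are built from tempered growth conditions on distributions, which do not obviously commute with formal completion in $s$; so this step is not an argument, it is a placeholder for the hardest part of the proof. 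The paper's actual mechanism is quite different: (i) a Kashiwara-style finite-generation theorem (Theorem~\ref{th:finitude}) for $\ho_{\DXS}(\shm,\shn)$ between strict holonomic modules, proved by induction on a $\mu$-stratification; (ii) an induction on $\dim Z_\shm$ using the triangle $R\Gamma_{[\YS]}\shm\to\shm\to\shm(\ast\YS)$; (iii) resolution of singularities to reduce to a $\DXS$-module of D-type along a normal crossing divisor; and (iv) the ramification trick of Lemma~\ref{D-type 5}, which kills the local monodromy of a D-type module by multiplying by $\prod_i|x_i|^{2\alpha_i(s)}$ and thus reduces to the case $\shl=\sho_{\XS}$. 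None of this appears in your sketch, and without a substitute, the strict case is unproved.

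Two smaller issues. First, you propose a ``devissage to the hearts,'' but $\RH^S_X$ is not known a priori to be $t$-exact for any of the relevant $t$-structures, and indeed the paper's reduction is to an induction on the length of the complex and then on the dimension of the support, not to the abelian hearts; the $t$-exactness of $\pSol_X$ holds for the \emph{dual} $t$-structure $\Pi$, not the natural one, which complicates such a reduction. Second, your treatment of the torsion case (``reduces immediately to the absolute Riemann--Hilbert correspondence on each fiber'') is too quick: a torsion module is not simply a direct sum over fibers, it can have $s$-nilpotent structure, and the paper has to pass through Proposition~\ref{L:tor0}, showing that $\shd_{\XS}\otimes_{\DXS}\shm$ is regular holonomic in the absolute sense, before invoking \cite[Cor.\,8.6]{Ka3}.
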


The functor $\pSol_X$ is the solution functor shifted by the dimension of $X$ (\cf\cite[\S3.3]{MFCS1} and Section \ref{subsec:solfunctor}), and $\RH^S_X$ is the relative Riemann-Hilbert functor (\cf\cite[\S3.4]{MFCS2} and Section~\ref{S2}). One direction of the correspondence, namely $\id\isom\pSol_X\circ\RH^S_X$, was proved in \cite[Th.\,3]{MFCS2}, and a particular case of this correspondence was obtained as Theorem~5 in \loccit, namely, if $\shm$ underlies a regular mixed twistor $\shd$-module, then $\shm$ can be recovered from $\pSol_X(\shm)$ up to isomorphism by the formula $\shm\simeq\RH^S_X(\pSol_X(\shm))$.

The methods used in the present paper rely on the previous works \cite{MFCS1}, \cite{MFCS2} as well as \cite{SS1}, \cite{TL}, \cite{FMF1}. The main tools in the proof given in \cite{MFCS2} are the good functorial properties satisfied by holonomic $\shd$-modules underlying mixed twistor $\shd$-modules, which include stability under pullback, localization along an hypersurface and direct image by projective morphisms.

The main problem to extend these results to more general situations is the bad behaviour of $\DXS$-holonomicity by pullback in general (\cite[Ex.\,2.4]{MFCS3}). We realized however that we can avoid these general arguments for \emph{regular} holonomic $\DXS$-modules.
We replace them by proving that regular holonomiciy behaves well with respect to pullback:

\begin{theo}\label{inverseimage}
Let $\shm\in\rD^\rb_{\rhol}(\DXS)$ and let $f:Y\to X$ be a morphism of complex manifolds, then $\Df^*\shm\in\rD^\rb_{\rhol}(\DYS)$.
\end{theo}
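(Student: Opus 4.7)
The plan is to reduce $\Df^*$ to elementary operations by factoring $f$ through its graph. Write $f=\pi\circ i$, where $i\colon Y\hto Y\times X$, $y\mapsto(y,f(y))$, is the graph embedding (a closed immersion of smooth manifolds) and $\pi\colon Y\times X\to X$ is the second projection (a smooth morphism). Since $\Df^*\simeq\Di^*\circ\Dpi^*$ up to shift, it suffices to treat separately the case of a smooth $\pi$ and that of a closed immersion $i$.

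The smooth case is essentially formal: locally $\pi$ is a projection $X\times Z\to X$, so $\Dpi^*$ is represented by the flat base-change functor $\sho_{Y\times S}\otimes_{\pi^{-1}\sho_{X\times S}}\pi^{-1}(\cdot)$, suitably shifted. It is exact, preserves coherence for the relative $\shd$-module structure, and transforms the relative characteristic variety via the cotangent pullback, keeping it $S$-Lagrangian. Regularity is preserved because the characterization of regular holonomicity from \cite{MFCS2} is formulated in terms stable under smooth pullback.

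For the closed-immersion case, I would first factor $i$ locally as a composition of codimension-one closed immersions, so we may assume that $Y$ is cut out by a single holomorphic function $g\in\sho_X$. The key tool is the localization triangle
\[
\Di_+\Di^!\shm\to\shm\to\shm(*g)\to{+1}
\]
in $\rD^\rb(\DXS)$. By the stability of regular holonomicity under localization along a hypersurface proved in \cite{MFCS2}, the term $\shm(*g)$ lies in $\rD^\rb_{\rhol}(\DXS)$; hence so does $\Di_+\Di^!\shm$. Applying the relative analogue of Kashiwara's equivalence for the closed immersion $i$, one obtains $\Di^!\shm\in\rD^\rb_{\rhol}(\DYS)$. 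Since for a smooth closed immersion $\Di^*\shm$ differs from $\Di^!\shm$ only by a cohomological shift and a line-bundle twist (by the determinant of the conormal bundle), both operations visibly preserving regular holonomicity, the conclusion for $\Di^*\shm$ follows.

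The main obstacle is the closed-immersion step. It depends crucially on two inputs: first, the stability of regular holonomicity under hypersurface localization, which is the nontrivial analytic input from \cite{MFCS2}; and second, the relative analogue of Kashiwara's equivalence preserving regularity, which requires showing that a regular holonomic $\DXS$-module supported on $Y\times S$ descends to a regular holonomic $\DYS$-module. The latter is tractable because $\Di_+$ is cohomologically trivial for a closed immersion and the characteristic variety transforms via the conormal, but the compatibility with the MFCS2 regularity condition must be tracked through each cohomological degree. Any delicacy arising from the poor behaviour of Verdier duality in the relative setting is sidestepped by working throughout with $\Di^!$ and converting to $\Di^*$ only at the end, via the explicit local Koszul computation for the sequence defining $Y$.
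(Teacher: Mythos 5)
Your structural outline -- factor $f$ through its graph, reduce to a smooth projection plus a closed immersion, and treat each case separately -- is the same as the paper's, and your smooth-case argument (base change, behaviour of the characteristic variety, commutation with $Li^*_s$) coincides with Section~\ref{subsubsec:smoothpullbackreghol}. However, there is a genuine gap in the closed-immersion step.

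You invoke ``stability of regular holonomicity under localization along a hypersurface proved in \cite{MFCS2}'', but this result is not available in \cite{MFCS2} in the generality you need. What \cite{MFCS2} gives (via Theorem~\ref{TRHS} and Proposition~\ref{RHSloc}) is that localization along $\YS$ preserves regular holonomicity for modules \emph{in the essential image of} $\RH^S_X$, i.e.\ for modules of the form $\RH^S_X(F)$ with $F\in\rD^\rb_{\cc}(\pOXS)$. Extending this to \emph{all} strict regular holonomic $\DXS$-modules is precisely Corollary~\ref{C:ststY} of the present paper, whose proof requires knowing that every strict regular holonomic module satisfies $\shm\simeq\RH^S_X(\pSol_X\shm)$ (Proposition~\ref{P:strictperv2}); and that in turn rests on the finiteness Theorem~\ref{th:finitude} for $\ho_{\DXS}$ between strict holonomic modules, which is proved by a non-trivial induction on the support. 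None of this is in \cite{MFCS2} for general regular holonomic modules -- there it is done only for modules underlying mixed twistor $\shd$-modules, and the whole point of the present paper (stated explicitly in the introduction) is to replace those twistor-theoretic arguments. So your localization step is circular: it silently assumes a piece of the Riemann-Hilbert correspondence that this theorem is itself being used to help establish.

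The paper also splits the closed-immersion case into strict and torsion parts, handling the torsion part not by localization but by the already-established equivalence for torsion objects (Proposition~\ref{Tequiv1}) together with the compatibility of $\RH^S$ with pullback (Proposition~\ref{KSmorph}); your uniform localization-triangle argument would again need the unproven localization stability for a possibly non-strict $\shm$. To make your approach rigorous you would have to first prove Theorem~\ref{th:finitude}, Proposition~\ref{P:strictperv2}, and Proposition~\ref{Tequiv1}, and then run the localization plus Kashiwara-equivalence argument on the strict and torsion parts separately -- which is essentially what the paper does.
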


Let us indicate the main points in the proof of Theorem~\ref{RHH} in Section~\ref{S3}. The first tool is \cite[Th.\,3]{MFCS2} which asserts that there exists a natural transformation
\[
\Id_{\rD^\rb_{\cc}(\pOXS)} \To{\alpha}\pSol_X \circ \RH^S_X
\]
providing a functorial isomorphism
\[
F\xrightarrow[\sim]{~\textstyle\alpha_F~} \pSol_X(\RH^S_X(F))
\]
for any $F\in \rD^\rb_{\cc}(\pOXS)$. We are then reduced to proving that there exists a natural transformation
\[
\Id_{\rD^\rb_{\rhol}(\DXS)}\To{\beta} \RH^S_X \circ \pSol_X
\]
such that, for any $\shm\in \rD^\rb_{\rhol}(\DXS)$, denoting by
\[
\beta_\shm:\shm\to \RH^S_X(\pSol_X(\shm))
\]
the unique morphism such that $\pSol_X(\beta_\shm) \circ \alpha_{\pSol_X(\shm)}=\Id_{\pSol_X(\shm)}$, we have
\[
\shm\xrightarrow[\sim]{~\textstyle\beta_\shm~}\RH^S_X(\pSol_X(\shm)).
\]
The proof of the existence of such a $\beta$ is reduced to that of the following result, which is equivalent to Theorem~\ref{RHH} by an argument already used in \cite[\S4.3]{MFCS2} (\cf introduction to Section \ref{S3}):

\begin{theo}\label{Tequivtf}
For any $\shm \in \rD^\rb_{\rhol}(\DXS)$
and for any $F\in\rD^\rb_\cc(\pOXS)$ the
complex $\Rhom_{\DXS}(\shm, \RH^S_X(F))$ belongs to $\rD^\rb_{\cc}(\pOXS)$.
\end{theo}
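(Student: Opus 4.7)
The plan is to establish a natural isomorphism
\[
\Rhom_{\DXS}(\shm, \RH^S_X(F)) \simeq \Rhom_{\pOXS}(F, \pSol_X(\shm))
\]
and then invoke the constructibility of the right-hand side. Recalling the definition of $\RH^S_X(F)$ from \cite[\S3.4]{MFCS2} as (essentially) $\Rhom_{\pOXS}(F,\ot_{X\times S}[d_X])$, with its appropriate subanalytic interpretation, and that of $\pSol_X(\shm)=\Rhom_{\DXS}(\shm,\ot_{X\times S})[d_X]$, this isomorphism is a form of $\Rhom$-adjunction interchanging $\Rhom_{\DXS}$ and $\Rhom_{\pOXS}$ across the two slots of $\ot_{X\times S}$. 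Once it is in place, the conclusion follows from two results: the $\CC$-constructibility of $\pSol_X(\shm)$ for any holonomic $\DXS$-module, established in \cite{MFCS1}, and the stability of $\rD^\rb_\cc(\pOXS)$ under $\Rhom_{\pOXS}(F,-)$ for $F\in\rD^\rb_\cc(\pOXS)$.

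To prove the adjunction isomorphism I would proceed by a double devissage. The class of pairs $(\shm,F)$ for which the claim holds is stable under distinguished triangles and shifts in each variable, so the t-structure on $\rD^\rb_{\rhol}(\DXS)$ reduces us to a single regular holonomic $\DXS$-module, while a standard triangulated argument on $\rD^\rb_\cc(\pOXS)$ reduces to $F$ of the form $(j_U)_!L$ with $L$ a local system on a subanalytic stratum $U\subset X\times S$. A further reduction on $\shm$ should bring us to a meromorphic connection with logarithmic poles along a normal crossings hypersurface in $X\times S$: locally on $X$ this can be achieved via a relative version of Hironaka, presenting $\shm$ as (part of a resolution by) a direct image $\Df_*\shn$ along a projective morphism $f:Y\to X$, with $\shn$ in the normal crossings situation on $Y\times S$. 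The standard $\shd$-module adjunction
\[
\Rhom_{\DXS}(\Df_*\shn,\RH^S_X(F))\simeq Rf_*\Rhom_{\DYS}(\shn,\Df^!\RH^S_X(F)),
\]
combined with a base-change isomorphism $\Df^!\RH^S_X(F)\simeq\RH^S_Y(f^{-1}F)$ (up to shift), would then transport the problem to the elementary case on $Y$, where the required identification can be checked directly from the explicit local description of $\pSol_Y(\shn)$. Theorem~\ref{inverseimage} is crucial here, since it ensures that pullbacks remain inside the regular holonomic class at each step and thus that the base-change makes sense within the framework we work in.

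The main obstacle I expect is the verification of the adjunction and base-change isomorphisms at the level of the subanalytic site on which $\ot_{X\times S}$ naturally lives, where one must check carefully that the sheaf-theoretic and $\shd$-module-theoretic manipulations interact correctly with the subanalytic sheafification. The devissage and the geometric reduction to the normal crossings case should be more routine given the functoriality already available in \cite{MFCS1}, \cite{MFCS2}, \cite{FMF1}, and the final $\CC$-constructibility is a standard consequence once the two isomorphisms above are established.
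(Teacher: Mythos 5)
Your broad strategy — dévissage, resolution of singularities, reduction to the normal crossing/D-type case, and a $\shd$-module adjunction across a projective morphism — is indeed the skeleton of the paper's proof. But there are several genuine gaps, and the logical organization is different in a way that matters.

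First, the normal crossing divisor you reduce to should live in $X$, not in $X\times S$: in the relative setting one resolves only in the $X$-direction, keeping $S$ fixed. More substantively, the claim that in the local model ``the required identification can be checked directly from the explicit local description of $\pSol_Y(\shn)$'' underestimates the difficulty. The D-type case is Lemma~\ref{D-type 5}, and its proof is not a direct computation: one must first pull back by a ramification $\delta:S'\to S$ (Propositions~\ref{ram} and~\ref{ram4}) so that the exponents become single-valued holomorphic functions on $S'$, and then conjugate $\tho$ by the real-analytic automorphism $|x_1|^{2\alpha_1(s)}\cdots|x_\ell|^{2\alpha_\ell(s)}$ to reduce to the case $\shl=\sho_{\XS}$ from \cite[Lem.\,3.19]{MFCS2}. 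Nothing in your sketch anticipates this.

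Second, and perhaps most important, your dévissage skips the torsion/strict dichotomy that is specific to the relative setting. A torsion regular holonomic $\DXS$-module is not, even locally, a direct image of a D-type module, and the paper handles it by a completely separate argument (Propositions~\ref{L:tor0} and~\ref{Tequiv1}, passing through $\shd_{\XS}\otimes_{\DXS}\shm$ and the absolute theory). Your plan gives no route for this case. Moreover, the reduction to a localized strict module (so that $\shm\simeq\Dpi_*\Dpi^*\shm[d_{X'}-d_X]$ holds on the nose) uses the triangle $R\Gamma_{[\YS]}\shm\to\shm\to\shm(\ast(\YS))$, whose terms must be shown to be regular holonomic. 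This is Corollary~\ref{C:ststY}, and its proof requires the strict case of the Riemann–Hilbert theorem itself (Proposition~\ref{P:strictperv2}), which in turn depends on the new finiteness Theorem~\ref{th:finitude}. These intermediate steps are not ``routine functoriality'': they are non-trivial new inputs that your sketch does not account for.

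Finally, a note on the organization: you propose to prove the isomorphism $\Rhom_{\DXS}(\shm,\RH^S_X(F))\simeq\Rhom_{\pOXS}(F,\pSol_X\shm)$ directly, and extract constructibility as a corollary. The paper deliberately goes the other way: it proves constructibility of the left side (the statement $P_X(\shm)$), and then deduces that the canonical morphism~\eqref{E:20} is an isomorphism by the relative Nakayama lemma, since both sides become isomorphic after applying $Li^*_{s_o}$ by the absolute theorem. This is cleaner precisely because proving the isomorphism directly in the local model forces you through the same ramification argument anyway, and the Nakayama route lets one handle arbitrary $F$ (not only those of the form $F\otimes\C_{(X\setminus D)\times S}$) by the dual localization trick with $F\otimes\C_{\YS}$.
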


The proof of Theorem~\ref{Tequivtf} follows the ideas of \cite[Lem.\,4.1.4]{KS6} (see also Kashiwara's proof \cite[\S8.3]{Ka3} in the absolute case). The proof in the $S$-torsion case amounts to that of \loccit\ On the other hand, in order to prove Theorem~\ref{Tequivtf} in the general case, we apply induction on the dimension of the support and proceed by considering the case of $\DXS$-modules of D-type (normal crossing case).

We know (\cf\cite[\S3]{MFCS1}) that the functor $\pSol_X$ transforms duality on $\rD^\rb_\hol(\DXS)$ to Poincaré-Verdier duality on $\rD^\rb_{\cc}(\pOXS)$. A~consequence of the Riemann-Hilbert correspondence of Theorem \ref{RHH} and the full faithfulness of $\RH^S_X$ is the good behaviour of the functor $\RH^S_X$ with respect to Poincaré-Verdier duality on the one hand, and duality for $\DXS$-modules on the other hand.

\begin{coro}\label{D2}
For any $F\in\rD^\rb_{\cc}(\pOXS)$, there exists an isomorphism in $\rD^\rb_{\rhol}(\DXS)$
\[
\bD(\RH^S_X(F))\simeq \RH^S_X(\bD F)
\]
which is functorial in $F$.
\end{coro}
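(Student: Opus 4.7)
The plan is to exploit Theorem~\ref{RHH}, which makes $\pSol_X$ and $\RH^S_X$ mutually quasi-inverse equivalences, together with the known compatibility between $\pSol_X$ and duality recalled in the paragraph preceding the corollary: there is a functorial isomorphism
\[
\pSol_X \circ \bD \simeq \bD \circ \pSol_X
\]
on $\rD^\rb_{\hol}(\DXS)$, where on the right $\bD$ denotes Poincaré--Verdier duality on $\rD^\rb_{\cc}(\pOXS)$. For this to be applicable to objects of $\rD^\rb_{\rhol}(\DXS)$, one uses that $\bD$ preserves regular holonomicity (this stability should be part of the definition/characterisation of $\rD^\rb_{\rhol}(\DXS)$ in Section~\ref{S1d}, or else follows from Theorem~\ref{RHH} itself since the target $\rD^\rb_{\cc}(\pOXS)$ is stable under Verdier duality).

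Given $F\in\rD^\rb_{\cc}(\pOXS)$, both $\bD(\RH^S_X(F))$ and $\RH^S_X(\bD F)$ lie in $\rD^\rb_{\rhol}(\DXS)$. I would apply $\pSol_X$ to the first one and compute
\[
\pSol_X\bigl(\bD(\RH^S_X(F))\bigr)\simeq \bD\bigl(\pSol_X(\RH^S_X(F))\bigr)\simeq \bD F,
\]
the first isomorphism by the compatibility above and the second by $\pSol_X\circ\RH^S_X\simeq\Id$ from Theorem~\ref{RHH}. Applying $\RH^S_X$ and using the other half of Theorem~\ref{RHH}, namely $\Id\simeq \RH^S_X\circ\pSol_X$ on $\rD^\rb_{\rhol}(\DXS)$, we obtain
\[
\bD(\RH^S_X(F))\simeq \RH^S_X\bigl(\pSol_X(\bD(\RH^S_X(F)))\bigr)\simeq \RH^S_X(\bD F),
\]
which is the sought isomorphism.

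Functoriality in $F$ follows step by step: $\RH^S_X$ is a functor, $\bD$ is a functor on both categories, the compatibility $\pSol_X\circ\bD\simeq\bD\circ\pSol_X$ is a natural isomorphism of functors, and the unit/counit isomorphisms of Theorem~\ref{RHH} are natural transformations. Uniqueness of the constructed isomorphism (and hence the possibility to patch local choices into a canonical natural transformation) is ensured by the full faithfulness of $\RH^S_X$, itself a formal consequence of Theorem~\ref{RHH}.

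The only non-routine point is to make sure that the compatibility $\pSol_X\circ\bD\simeq\bD\circ\pSol_X$ already stated in the paper for holonomic $\DXS$-modules can be invoked on $\rD^\rb_{\rhol}(\DXS)$; this is where the stability of regular holonomicity under $\bD$ is used, but no new argument is needed. The rest is a diagram chase that mirrors the classical derivation of the analogous statement in the absolute Riemann--Hilbert correspondence.
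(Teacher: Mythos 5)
Your proof is correct and follows essentially the same route as the paper: transport $\bD$ through $\pSol_X$ using the known compatibility $\pSol_X\circ\bD\simeq\bD\circ\pSol_X$, simplify with $\pSol_X\circ\RH^S_X\simeq\Id$, and then transfer back via full faithfulness (equivalently, your use of the unit $\Id\simeq\RH^S_X\circ\pSol_X$). The only difference is stylistic: the paper writes the whole computation as a single chain of isomorphisms $\pSol_X\bD\RH^S_X(F)\simeq\bD F\simeq\pSol_X\RH^S_X(\bD F)$ and then invokes full faithfulness of $\pSol_X$, which is exactly your diagram chase phrased more compactly.
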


\subsubsection*{Acknowledgements}
We are grateful to Luca Prelli for useful advising in Section \ref{s5}. We thank the anonymous referee for valuable comments which helped us to improve the presentation of the article.

\section{Review on the relative holonomic \texorpdfstring{$\DXT$}{DXT}-modules and constructible complexes}\label{S1}

In this section, we review the main definitions and properties of the objects entering the relative Riemann-Hilbert correspondence. We refer to \cite{MFCS1, MFCS2, MFCS3} for details. We also give supplementary properties that will happen to be useful in the proof of the main results of this article.

\subsection{Holonomic \texorpdfstring{$\DXT$}{DXT}-modules}\label{subsec:prelimD}

We denote by $\DXT$ the subsheaf of $\shd_{\XT}$ of
relative differential operators with respect to the projection
\[
p_X:\XT\to T,
\]
that we simply denote by $p$ when there is no ambiguity. This is a Noetherian sheaf of rings. A coherent $\DXT$-module $\shm$ is said to be \emph{holonomic} if
\hbox{$\Char(\shm)\subseteq\Lambda\times T$} for some closed conic Lagrangian complex analytic subset~$\Lambda$ of $T^* X$ (see \cite[Lem.\,2.10]{FMF1} for a more precise description of the characteristic variety).

\begin{example}\label{exam:fsd}
Let $f_1,\dots,f_d:X\to\CC$ be holomorphic functions. We let here $T=\CC^d$ with coordinates $s_1,\dots,s_d$ and we consider the partially algebraic version $\shd_X[s_1,\dots,s_d]$ of $\DXT$. Let $M$ be a holonomic $\shd_X$-module and let $m$ be a local section of $M$. Extending \cite{Ka1} (which is the case $d=1$), one considers the $\shd_X[s_1,\dots,s_d]$-submodule $\shm=\shd_X[s_1,\dots,s_d]\cdot m\cdot f_1^{s_1}\cdots f_d^{s_d}$ of $M[(\prod_if_i)^{-1}][s_1,\dots,s_d]$. It is proved in \cite[Prop.\,13]{Maisonobe16} that $\shm$ is relatively holonomic.
\end{example}

\begin{example}\label{exam:mtm}
Set $S=\C^*$ with coordinate $z$. Any mixed twistor $\shd$-module, in the sense of \cite{Mochizuki11}, which consists of data parametrized by $\mathbb{P}^1$, gives rise, when restricting the parameter to $S=\CC^*$, to a holonomic $\DXS$-module (this is by definition, \cf \cite[Chap.\,1]{Sabbah05}).
\end{example}

We denote by
$\rD^\rb_{\hol}(\DXT)$ the full subcategory of $\rD^\rb_{\coh}(\DXT)$
whose complexes have holonomic cohomologies.

Given $t_o\in T$, let $i_{t_o}$ denote the inclusion $X\times \{t_o\}\hto \XT$. Following~\cite{MFCS1}, we denote by
\[
Li^{*}_{t_o}:\rD^\rb_\hol(\DXT)\to\rD^\rb_\hol(\shd_X)
\]
the derived functor
\begin{equation}\label{eq:Listar}
p_X^{-1}(\sho_T/\mathfrak{m}_{t_o})\overset{L}{\otimes}_{\pOXT}(\cbbullet),
\end{equation}
where $\mathfrak{m}_{t_o}$ is the maximal ideal of functions vanishing at $t_o$. Thanks to the variant of Nakayama's lemma \cite[Prop.\,1.9 \& Cor.\,1.10]{MFCS2}, the family of functors~$Li^{*}_{t_o}$ on $\rD^\rb_\hol(\DXT)$, for $t_o\in T$, is a conservative family, \ie if \hbox{$\phi:\shm\to \shn$} is a morphism in $\rD^\rb_\hol(\DXT)$ such that $Li^*_{t_o}\phi$ is an isomorphism in
$\rD^\rb_\hol({\mathcal D}_X)$ for each $t_o\in T$ then $\phi$ is an isomorphism (or, equivalently, using the mapping cone:
if $\shm\in\rD^\rb_\hol(\DXT)$ is such that $Li^*_{t_o}\shm=0$ for each $t_o\in T$ then $\shm=0$).

Recall (\cf\cite{MFCS1}) that a coherent $\DXT$-module is said to be \emph{strict} if it is $\pOXT$-flat. If $\shm$ is strict, $Li^*_{t_o}\shm$ consists of a single coherent $\shd_X$\nobreakdash-module $i^*_{t_o}\shm$ (in~degree zero). For example (recall that $\dim S=1$) a coherent $\DXS$-module $\shm$ is strict if and only if it has no $\pOXS$-torsion. If $\shm$ is possibly not strict, we shall denote by $t(\shm)$ its (coherent) submodule consisting of germs of sections which are torsion elements for the $\pOXS$-action, and $f(\shm):=\shm/t(\shm)$ is called its strict (or torsion-free) quotient. Therefore, the $\DXS$-module $\shm$ is strict if and only if $\shm\simeq f(\shm)$.

Given $\shm\in \rD^\rb_{\hol}(\DXT)$, the functor\enlargethispage{\baselineskip}%
\[
\bD(\shm):=\rh_{\DXT}(\shm, \DXT\otimes_{\sho_{\XT}}\Omega^{\otimes^{-1}}_{\XT/T})[d_X]
\]
provides a duality in $\rD^\rb_{\hol}(\DXT)$ but, contrary to the absolute case (\ie $\dim T=0$), this functor is not $t$-exact. For example, when the parameter space has dimension one, the lack of exactness of the dual functor on $\rD^\rb_{\hol}(\DXS)$ is due to the fact that the dual of a torsion holonomic $\DXS$-module $\shm$ is not concentrated in degree zero: if~$\shm\simeq\nobreak t(\shm)$ we have $\bD(\shm)\simeq \shh^1(\bD(\shm))[-1]$. On the other hand, if $\shn$ is a strict holonomic $\DXT$-module, then $\bD(\shn)\simeq \shh^0(\bD(\shn))$ and $\shh^0(\bD(\shn))$ is strict (see \cite[Prop.\,2]{MFCS2}), that is, $\shn$ is also dual holonomic: recall that a complex $\shn$ in $\rD^\rb_{\hol}(\DXT)$ is called \emph{dual holonomic} if it is in the heart of the $t$\nobreakdash-structure~$\Pi$ (see \cite[\S2]{FMF1}) which, by definition, is the $t$\nobreakdash-structure dual to the canonical $t$-structure.

We recall the following result in \cite[Lem.\,2.10]{FMF1}:\footnote{We keep the same numbering as in the published paper.}
\addtocounter{theorem}{2}
\begin{proposition}\label{Charhol}
For any
holonomic
$\shd_{X\times T/T}$-module $\shm$ we have
\[
\mathrm{Char}(\shm)=\bigcup_{i\in I} \Lambda_i\times T_i
\]
for some closed $\C^*$-conic irreducible Lagrangian subsets $\Lambda_i$ of $ T^*X$ and some
closed analytic subsets $T_i$ of $T$, and, locally on $X$, the set $I$ is finite.
Moreover $p_X(\mathrm{Supp}(\shm))=\bigcup_{i\in I} T_i$, hence it is an analytic subset of $T$, and
\[
\dim {\mathrm{Char}}(\shm)=\dim X+t, \quad \hbox{where } t=\dim p_X(\mathrm{Supp}(\shm))=
\sup_{i\in I} \dim T_i.
\]
\end{proposition}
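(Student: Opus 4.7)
The plan is to start from the defining inclusion $\Char(\shm)\subseteq\Lambda\times T$ for some closed $\CC^*$-conic Lagrangian $\Lambda\subseteq T^*X$ and to sharpen it to the claimed product decomposition by analyzing irreducible components. Working locally on $X$, I would choose a good filtration $F_\sbullet\shm$; then $\gr^F\shm$ is coherent over the relative symbol algebra, which is an $\NN$-graded $\sho_{T^*X\times T}$-algebra, and its support equals $\Char(\shm)$. Hence $\Char(\shm)$ is a closed analytic subset of $T^*X\times T$ with a locally finite irreducible decomposition $\Char(\shm)=\bigcup_\alpha C_\alpha$. Each $C_\alpha$ is closed, irreducible, $\CC^*$-conic in the $T^*X$-direction, and, by irreducibility, contained in some $\Lambda_{i(\alpha)}\times T$ for an irreducible component $\Lambda_{i(\alpha)}$ of $\Lambda$.

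The central step is then to establish the product structure $C_\alpha=\Lambda_\alpha\times T_\alpha$, where $\Lambda_\alpha:=\Lambda_{i(\alpha)}$ and $T_\alpha$ is the closure of the image of $C_\alpha$ in $T$. For generic $t_o\in T_\alpha$, the specialization $Li^*_{t_o}\shm$ is a holonomic $\shd_X$-module whose characteristic variety is contained in the fiber $C_\alpha\cap(T^*X\times\{t_o\})$; in particular this fiber is a closed $\CC^*$-conic Lagrangian subset of $T^*X$ lying inside $\Lambda$. By generic flatness, the projection $C_\alpha\to T_\alpha$ has equidimensional fibers over a Zariski open $T_\alpha^\circ\subseteq T_\alpha$; irreducibility of $C_\alpha$, relative Gabber involutivity, and the irreducible Lagrangianity of $\Lambda_\alpha$ (of dimension $d_X$) together should then force these generic fibers to equal $\Lambda_\alpha$ itself. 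Hence $\Lambda_\alpha\times T_\alpha^\circ\subseteq C_\alpha$, and closedness and irreducibility give $C_\alpha=\Lambda_\alpha\times T_\alpha$.

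The remaining assertions then follow formally. Since $\Char(\shm)$ is $\CC^*$-conic, $\supp\shm\subseteq X\times T$ coincides with the image of $\Char(\shm)$ under the bundle projection $T^*X\times T\to X\times T$; writing $C_\alpha=\Lambda_\alpha\times T_\alpha$ yields $p_X(\supp\shm)=\bigcup_\alpha T_\alpha$, a locally finite analytic subset of $T$, and $\dim\Char(\shm)=\max_\alpha(d_X+\dim T_\alpha)=d_X+t$.

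The hard part will be the product-structure step: ruling out ``twisted'' closed subvarieties of $\Lambda_\alpha\times T$ whose generic fiber is a proper subset of $\Lambda_\alpha$. I expect this to be the only non-formal step, resting on the confluence of irreducibility of $\Lambda_\alpha$, the $\CC^*$-conic structure, the holonomicity of each specialization $Li^*_{t_o}\shm$, and a Gabber-type involutivity for the relative characteristic variety. The assertions on support and dimensions are then consequences of the product decomposition and should not present difficulties.
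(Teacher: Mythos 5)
This statement is recalled verbatim from~\cite[Lem.\,2.10]{FMF1}; the paper itself gives no proof, so there is no internal argument to match your proposal against. I will therefore assess the proposal on its own merits.

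Your global strategy---decompose $\Char(\shm)$ into irreducible components $C_\alpha$, note each sits inside some $\Lambda_\alpha\times T$, and force $C_\alpha=\Lambda_\alpha\times T_\alpha$ by a generic-fiber dimension count---is the right shape for the argument, and the ``formal consequences'' paragraph at the end (identification of $\supp\shm$ with the image of $\Char(\shm)$, hence $p_X(\supp\shm)=\bigcup T_\alpha$ and the dimension formula) is correct once the product decomposition is in hand. The upper bound $\dim C_\alpha\leq d_X+\dim T_\alpha$ is also immediate from $C_\alpha\subseteq\Lambda_\alpha\times T_\alpha$.

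The gap is exactly where you flag it, but it is more than ``one non-formal step to be filled in'': the mechanism you invoke does not give what you need, and the conclusion hinges on a nontrivial input you neither state nor cite. Concretely, to run your closure argument $\Lambda_\alpha\times T_\alpha^\circ\subseteq C_\alpha$ you must show that the generic fiber of $C_\alpha\to T_\alpha$ has dimension $\geq d_X$; since $\Lambda_\alpha$ is irreducible of pure dimension $d_X$, this forces the fiber to be all of $\Lambda_\alpha$. Your sentence about $Li^*_{t_o}\shm$ only gives a containment $\Char(Li^*_{t_o}\shm)\subseteq\Char(\shm)\cap(T^*X\times\{t_o\})$, i.e.\ an \emph{upper} bound on the fiber, which is useless for this purpose (and even that containment involves the union over cohomologies of $Li^*_{t_o}\shm$, not a single module). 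What is actually needed is a \emph{lower} bound, namely a relative Bernstein inequality: every irreducible component $C$ of $\Char(\shm)$ satisfies $\dim C\geq d_X+\dim\overline{p_T(C)}$. Your phrase ``relative Gabber involutivity'' gestures at this, but naive fiberwise coisotropicity does not immediately yield the componentwise lower bound (two components with small generic fibers over the same $t_o$ could together fill out a Lagrangian). You would need to state the relative involutivity/Bernstein result precisely and deduce the componentwise bound from it, or else argue via a good filtration and a generic flatness statement showing $\Char(Li^*_{t_o}\shm)$ actually \emph{equals} the fiber of $\Char(\shm)$ for generic $t_o\in T_\alpha^\circ$ (which, combined with the absolute Bernstein inequality for $\shd_X$-modules, would give the lower bound). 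As written, the proposal identifies the right bottleneck but does not close it.
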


\subsection{Relative constructible and perverse complexes}
\label{subsec:prelimtop}

\subsubsection{Relative local systems}
Following \cite{MFCS1} we say that a sheaf $\pOXT$-module $F$ is
$T$-locally constant coherent if, for each point $(x_0,t_o)\in \XT$ there exists a neighborhood $U=V_{x_0}\times T_{t_o}$ and
a coherent sheaf $G^{(x_0,t_o)}$ of $\sho_{T_{t_o}}$-modules such that $F_{|U}\cong p_{V_{x_0}}^{-1}(G^{(x_0,t_o)})$. We refer to \cite[App.]{MFCS2} for basic properties.

By definition, $\rD^\rb_{\lc\; \coh}(\pOXT)$ is the full subcategory of $\rD^\rb(\pOXT)$ whose complexes have $T$-locally constant coherent cohomologies (notice that, for such an $F$, $F_{|\{x_0\}\times T}\in \rD^\rb_\coh(\sho_T)$). We refer to \cite[\S2]{MFCS1} for more properties.

\subsubsection{Relative \texorpdfstring{$\R$}{R}-constructibility}\label{rc}
In the following, we will have to consider derived categories $\rD^\star$ with $\star=\rb$ or $\star={}-$. We denote by $\rD^\star_\rc(\pOXT)$ the full subcategory of $\rD^\star(\pOXT)$ whose objects~$F$ admit a $\mu$-stratification $(X_\alpha)$ of $X$ such that $i_{\alpha}^{-1}(F)\in\rD^\star_{\lc\; \coh}(p_{X_\alpha}^{-1}\sho_T)$ for any~$\alpha$. We refer to \cite[\S2]{MFCS1} for details.

Objects of $\rD^-_\rc(\pOXT)$ can be given a simple representative. Let us denote by $\shs$ the full additive subcategory of $\Mod_{\rc}(\pOXT)$ whose objects are sheaves which can be expressed as locally finite direct sums of terms of the form $\C_{\Omega}\boxtimes \sho_V:=\pOXT\otimes \C_{\Omega\times V}$ for some relatively compact open subanalytic subsets $\Omega$ in $X$ and $V$ in~$T$. A morphism $\varphi:\C_{\Omega}\boxtimes \sho_V\to\C_{\Omega'}\boxtimes \sho_{V'}$ is easily described: setting $\Omega''=\Omega\cap\Omega'$ and $V''=V\cap V'$, $\varphi$ is the extension by zero of its restriction $\varphi_{|\Omega''\times V''}:\pOXT{}_{|\Omega''\times V''}\to\pOXT{}_{|\Omega''\times V''}$, which is the multiplication by a section of $\pOXT$ on each connected component of $\Omega''\times V''$.

Following the terminology of \hbox{\cite[App.\,A]{KS4}}, we say that an object $F$ of $\Mod(\pOXT)$ is $\shs$-coherent if there exist $L\in\shs$ and an epimorphism $L\to F$, and if, for any morphism $L'\to F$ with $L'$ in $\shs$, there exist~$L''$ in $\shs$ and a morphism $L''\to L'$ such that $L''\to L'\to F$ is exact. It follows from \cite[Prop.\,3.5]{MFCS2} that the category of $\shs$\nobreakdash-coherent objects of $\Mod(\pOXT)$ is equal to $\Mod_{\rc}(\pOXT)$ and the category $\rD^-_{\shs\text{-}\coh}(\Mod(\pOXT))$ is nothing but $\rD^-_{\rc}(\pOXT)$. On the other hand, one defines the category $\rD^-_\coh(\shs)$ as in \cite[p.\,63]{KS4}, with the identification $\mathbf{A}=\Mod(\pOXT)$ and $\mathbf{P}=\shs$ (the functor~$L$ of \loccit\ is here the inclusion, $H$ is the restriction of $\Hom_{\Mod(\pOXT)}$ to $\shs\times\Mod(\pOXT)$ and $\alpha$ is the identity). In the present situation, we have $\rD^-_\coh(\shs)=\rD^-(\shs)$.

\begin{proposition}[\cf {\cite[Th.\,A.5]{KS4}}]\label{P:AF}
The natural functor
\[
L:\rD^-(\shs)\to\rD^-_{\rc}(\pOXT)
\]
is an equivalence of categories.
\end{proposition}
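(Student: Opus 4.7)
The plan is to apply \cite[Th.\,A.5]{KS4} to the data $(\mathbf{A},\mathbf{P})=(\Mod(\pOXT),\shs)$, using the inclusion $L:\shs\hookrightarrow\Mod(\pOXT)$, the bifunctor $H=\Hom_{\Mod(\pOXT)}|_{\shs\times\Mod(\pOXT)}$ and the identification $\alpha=\Id$ pointed out in the text. The identifications $\rD^-_\coh(\shs)=\rD^-(\shs)$ (every object of $\shs$ being trivially $\shs$-coherent) and $\rD^-_{\shs\text{-}\coh}(\Mod(\pOXT))=\rD^-_\rc(\pOXT)$ are already recorded, so Proposition \ref{P:AF} reduces to verifying the two hypotheses of \cite[Th.\,A.5]{KS4}: first, that $\shs$ is stable by finite direct sums and contains $0$; second, that for any morphism $\varphi:L_1\to L_0$ in $\shs$, the kernel computed in $\Mod(\pOXT)$ is $\shs$-coherent.

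The first hypothesis is immediate from the definition of $\shs$, since locally finite direct sums of terms $\C_\Omega\boxtimes\sho_V$ contain the empty sum and are stable by finite sum. For the second, observe that $L_0, L_1\in\Mod_\rc(\pOXT)$ and that $\Mod_\rc(\pOXT)$ is an abelian subcategory of $\Mod(\pOXT)$ by \cite[Prop.\,3.5]{MFCS2}; therefore $\ker\varphi$ lies in $\Mod_\rc(\pOXT)$, which by the same reference coincides with the category of $\shs$-coherent objects.

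Together, these two conditions yield, for every object $F$ of $\Mod_\rc(\pOXT)$, a left resolution by objects of $\shs$: from $\shs$-coherence one gets an epimorphism $L_0\twoheadrightarrow F$ with $L_0\in\shs$; the kernel $K_0=\ker(L_0\to F)$ lies in $\Mod_\rc(\pOXT)$ (abelian subcategory), so the coherence of $K_0$ provides $L_1\twoheadrightarrow K_0$ with $L_1\in\shs$, and the procedure iterates. The general machinery of \cite[App.\,A]{KS4} assembles such resolutions into a quasi-inverse of $L$, proving that $L$ is an equivalence. The main conceptual input, already absorbed into \cite[Prop.\,3.5]{MFCS2}, is the abelian-category structure on $\Mod_\rc(\pOXT)$ and its coincidence with the $\shs$-coherent objects; once this is granted, the assertion is a formal consequence of the framework of \cite{KS4}.
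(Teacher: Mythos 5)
Your proposal correctly situates the result within the framework of \cite[Th.\,A.5]{KS4} and correctly records the identifications $\rD^-_\coh(\shs)=\rD^-(\shs)$ and $\rD^-_{\shs\text{-}\coh}(\Mod(\pOXT))=\rD^-_\rc(\pOXT)$, but it misidentifies the hypotheses to be verified. The hypotheses of that theorem are the four axioms (A.1)--(A.4) of \loccit, and the paper's entire point is that only (A.3) is nontrivial: it is a lifting property through epimorphisms, which the paper isolates and proves as Lemma~\ref{PTMF6}. Concretely, one must show that for any epimorphism $\psi\colon F\to G$ in $\Mod(\pOXT)$, any $\sht\in\shs$ and any $g\colon\sht\to G$, there exist $\sht'\in\shs$, an epimorphism $\psi'\colon\sht'\to\sht$, and $g'\colon\sht'\to F$ with $\psi g'=g\psi'$. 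Establishing this requires a genuine argument — the paper reduces to $\sht=\pOXT\otimes\C_{\Omega\times V}$, covers $\Omega\times V$ by products $\Omega_i\times V_i$ on which local lifts $f_i$ of the section $g(1)$ exist, and assembles $\sht'=\bigoplus_i\pOXT\otimes\C_{\Omega_i\times V_i}$. This is the substance of the proof and it is entirely absent from your proposal.

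The two hypotheses you state — additivity of $\shs$, and $\shs$-coherence of kernels of morphisms in $\shs$ — do not coincide with (A.1)--(A.4), and your verification of the second one is on shaky ground: you invoke abelian-ness of $\Mod_\rc(\pOXT)$ as a subcategory of $\Mod(\pOXT)$, but \cite[Prop.\,3.5]{MFCS2} provides the identification of the $\shs$-coherent objects with $\Mod_\rc(\pOXT)$, not the closure of the latter under kernels inside $\Mod(\pOXT)$; in the Kashiwara--Schapira framework that closure is deduced \emph{from} the axioms (A.1)--(A.4), so using it to verify them is circular unless you supply an independent proof. Finally, the resolution construction you sketch would only address essential surjectivity of $L$ on bounded-above derived categories; full faithfulness is precisely what hinges on the lifting property (A.3), which your argument never touches. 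So there is a genuine gap: the missing ingredient is Lemma~\ref{PTMF6}.
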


\begin{proof}
We only need to check that the conditions for applying \cite[Th.\,A.5]{KS4} are fulfilled, that is, that the pair $(\Mod(\pOXT),\shs)$ satisfies the properties (A.1)--(A.4) for $(\mathbf{A},\mathbf{P})$ in \loccit, and only (A.3) is not obvious. It is proved in the lemma below.
\end{proof}

\begin{lemma}\label{PTMF6}
Let $F$ and $G$ be objects of $\Mod(\pOXT)$, let $\psi: F\to G$ be an epimorphism, let $\sht\in\shs$ and let $g:\sht\to G$ be given. Then there exist an object $\sht'\in\shs$, an epimorphism $\psi': \sht'\to \sht$ and a morphism $g': \sht'\to F$ such that $\psi g'=g\psi'$.
\end{lemma}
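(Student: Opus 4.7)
My plan is to reduce to the case of a single generator $\sht = \C_{\Omega} \boxtimes \sho_V$ and then exploit the epimorphism $\psi$ to produce local lifts which I assemble into the desired $\sht' \in \shs$.

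First, since any $\sht \in \shs$ is a locally finite direct sum $\sht = \bigoplus_{i \in I} \sht_i$ with $\sht_i = \C_{\Omega_i} \boxtimes \sho_{V_i}$, it suffices to solve the lifting problem for each summand $\sht_i$, producing $\sht'_i \in \shs$ whose basic terms are supported inside $\Omega_i \times V_i$: then $\sht' := \bigoplus_i \sht'_i$ is again a locally finite direct sum of basic terms, and $\psi' := \bigoplus_i \psi'_i$, $g' := \bigoplus_i g'_i$ solve the original square. So I may assume $\sht = \C_{\Omega} \boxtimes \sho_V$.

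Via the adjunction for the extension by zero along $j: \Omega \times V \hookrightarrow \XT$, the morphism $g: \sht = j_!(\pOXT|_{\Omega \times V}) \to G$ of $\pOXT$-modules corresponds to a section $g \in \Gamma(\Omega \times V, G)$, and any lift of this section to $F$ through $\psi$ translates back into a morphism of $\pOXT$-modules to $F$. Since $\psi$ is an epimorphism of sheaves, each point $(x_0, t_0) \in \Omega \times V$ admits a relatively compact subanalytic product neighborhood $\Omega_0 \times V_0 \subset \Omega \times V$ and a section $s_0 \in \Gamma(\Omega_0 \times V_0, F)$ with $\psi(s_0) = g|_{\Omega_0 \times V_0}$.

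The key step is to extract from the resulting open cover of $\Omega \times V$ a refinement that is locally finite in $X \times T$ by such products. I would exhaust $\Omega \times V$ by an increasing sequence of relatively compact subanalytic open subsets whose closures lie in $\Omega \times V$, cover each compact layer by finitely many of the above product neighborhoods, and invoke subanalytic paracompactness to arrange that the collected family $\{\Omega_j \times V_j\}_{j \in J}$ is locally finite in $X \times T$, each piece equipped with a lift $s_j \in \Gamma(\Omega_j \times V_j, F)$. Then I set $\sht' := \bigoplus_{j \in J} \C_{\Omega_j} \boxtimes \sho_{V_j} \in \shs$; the morphism $\psi': \sht' \to \sht$ is the direct sum of the natural inclusions $\C_{\Omega_j} \boxtimes \sho_{V_j} \hookrightarrow \C_{\Omega} \boxtimes \sho_V$, which is an epimorphism because the $\Omega_j \times V_j$ cover $\Omega \times V$; and $g': \sht' \to F$ is the direct sum of the morphisms determined by the sections $s_j$. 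The equality $\psi g' = g \psi'$ is then checked summand by summand and reduces to $\psi(s_j) = g|_{\Omega_j \times V_j}$, which holds by construction.

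The main obstacle in this plan is the subanalytic bookkeeping at the refinement step. Local lifts are immediate from $\psi$ being an epimorphism, but a naive refinement of the local cover may accumulate on the boundary $\partial(\Omega \times V)$ and thus fail to be locally finite in the ambient space, so the cover must be chosen carefully in order that $\sht'$ genuinely lies in $\shs$; this is where subanalytic paracompactness plays its role.
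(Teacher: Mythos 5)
Your proof follows essentially the same route as the paper's: reduce to a single basic generator $\sht=\pOXT\otimes\C_{\Omega\times V}$, identify $g$ with the image $g(e)$ of the distinguished section, produce local lifts of $g(e)$ through the epimorphism $\psi$, and assemble those lifts (extended by zero) into a direct sum $\sht'$ of terms $\pOXT\otimes\C_{\Omega_j\times V_j}$, together with the obvious $\psi'$ and $g'$. The reduction step (you: by decomposing the direct sum; the paper: by ``$\pOXT$-linearity'') and the assembly are identical in content.

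The one place where your argument deviates is the bookkeeping of the cover, and this is also where it is weaker than the paper's. You correctly observe that for $\sht'$ to land in $\shs$ the family $\{\Omega_j\times V_j\}$ must be locally finite in $X\times T$, not merely in $\Omega\times V$; but the fix you propose --- exhaust $\Omega\times V$ by compact layers, cover each by finitely many product neighborhoods, and ``invoke subanalytic paracompactness'' --- yields local finiteness inside the open set $\Omega\times V$, not in the ambient $X\times T$: by construction, the pieces produced that way necessarily accumulate on $\partial(\Omega\times V)$. The paper avoids this entirely by taking the covering family to be \emph{finite} ($i=1,\dots,m$), which is trivially locally finite; the relative compactness of $\Omega$ and $V$ is exactly what makes a finite subanalytic cover available, and the paper exploits this directly rather than passing through an exhaustion. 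So the step you flagged as ``the main obstacle'' is a genuine one, but your proposed remedy does not resolve it; the remedy the paper uses --- keep the family finite by compactness --- is the one you should adopt.
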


\begin{proof}
By $\pOXT$-linearity we may reduce to the case $\sht={\pOXT}\otimes \C_{\Omega\times V}$, for some relatively compact open subanalytic subsets $\Omega,V$ respectively in $X$ and~$T$.

Let $e$ be the section $1\in\Gamma(\Omega\times V,\sht)$. By the assumption on $\psi$, we can cover~$\Omega$ (\resp $V$) by a locally finite family of relatively compact open subanalytic sets $\Omega_i\subset\nobreak X$ (\resp $V_i\subset T$), $i=1,\dots, m$, and find sections $f_i\in\Gamma(\Omega_i\times V_i,F)$ such that $\psi|_{\Omega_i\times V_i}(f_i)=g(e)|_{\Omega_i\times V_i}$. The morphism $\CC_{|\Omega_i\times V_i}\to F_{|\Omega_i\times V_i}$ extends in a unique way as a morphism $\CC_{\Omega_i\times V_i}\to F$ and, by $\pOXT$-linearity, as a morphism $\pOXT\otimes \C_{\Omega_i\times V_i}\to F$.

Setting $\sht':=\bigoplus_{i=1}^m\pOXT\otimes \C_{\Omega_i\times V_i}$, we obtain in this way a $\pOXT$-linear morphism $g':\sht'\to F$. On the other hand, by the covering property, the natural morphism $\CC_{\Omega_i\times V_i}\to\CC_{\Omega\times V}$ which extends $\id:\CC_{\Omega_i\times V_i|\Omega_i\times V_i}\to\CC_{\Omega\times V|\Omega_i\times V_i}$ induces an epimorphism $\bigoplus_{i=1}^m\C_{\Omega_i\times V_i}\to\CC_{\Omega\times V}$ and, by $\pOXT$-linearity, an epimorphism $\psi':\sht'\to\sht$, which clearly satisfies $\psi g'=g\psi'$.
\end{proof}

We note the following, to be used in the course of the proof of Lemma~\ref{LdirS}:
\begin{remark}\label{C:functors}
Let $\Phi,\Psi$ be two triangulated functors from $\rD^-_\rc(\pOXT)$ to a triangulated category $\mathsf{C}$.
Any morphism of functors $\eta_\shs:\Phi_\shs\to\Psi_\shs$  (with
$\Phi_\shs=\Phi\circ L$, $\Psi_\shs=\Psi\circ L$) can be extended to a morphism of functors $\eta:\Phi\to\Psi$.
\end{remark}

\subsubsection{Relative \texorpdfstring{$\C$}{C}-constructibility}\label{subsubsec:relativeCc}
By definition (\cf \cite[Def.\,2.19]{MFCS1}), the full subcategory $\rD^\rb_\cc(\pOXT)$ consists of objects of $\rD^\rb_{\rc}(\pOXT)$ whose microsupport is $\C^*$-conic. We call these objects \emph{$T$-$\C$-constructible complexes}.

For any $t_o\in T$, there is a functor
\[
Li^{*}_{t_o}:\rD^\rb(\pOXT)\to\rD^\rb(\C_X)
\]
also defined by \eqref{eq:Listar}. It sends $\rD^\rb_\rc(\pOXT)$ to $\rD^\rb_\rc(\C_X)$ and $\rD^\rb_\cc(\pOXT)$ to $\rD^\rb_\cc(\C_X)$.

Recall (\cf \cite[Prop.\,2.2]{MFCS1}) that a variant of Nakayama's lemma holds for complexes $F$ in $\rD^\rb(\pOXT)$
whose cohomology objects $\shh^jF$ have fibers $\shh^jF_{(x,s)}$ of finite type over $\sho_{T,s}$ for any $(x,s)\in \XT$. As a consequence, the family of functors
$(Li^{*}_{t_o})_{{t_o}\in T}$ on $\rD^\rb_\rc(\pOXT)$ (\resp $\rD^\rb_\cc(\pOXT)$) is a conservative family (in particular,
if \hbox{$F\!\in\!\rD^\rb_\cc(\pOXT)$} satisfies $Li^*_{t_o}F=0$ for each ${t_o}\in T$, then $F=0$).

\subsubsection{Perversity}\label{subsubsec:perv}
The category $\rD^\rb_\cc(\pOXT)$ is endowed with a perverse $t$\nobreakdash-structure defined in \cite[\S2.7]{MFCS1}
as the relative analogue to the middle perverse $t$-structure in the absolute case where $\dim T=0$:
\begin{itemize}
\item
$\pD^{\leq 0}_{\cc}(\pOXT)$ is the full subcategory of objects $F$ of $\rD^\rb_{\cc}(\pOXT)$ such that there exists an adapted $\mu$-stratification $(X_\alpha)$ of $X$ for which $i_x^{-1}F\in {\rD}_\coh^{\leq -d _{X_{\alpha}}}(\sho_T)$ for any $x\in X_\alpha$ and any $\alpha$.

\item
$\pD^{\geq 0}_{\cc}(\pOXT)$ is the full subcategory of objects $F$ of $\rD^\rb_{\cc}(\pOXT)$ such that there exists an adapted $\mu$-stratification $(X_\alpha)$ of $X$ for which $i_x^{!}F\in {\rD}_\coh^{\geq d _{X_{\alpha}}}(\sho_T)$ for any $x\in X_\alpha$ and any $\alpha$.
\end{itemize}

The heart of this $t$-structure is the abelian category of \emph{relative perverse sheaves} denoted by $\perv(\pOXT)$. We often omit the word ``relative''.

In analogy with the $\DXS$-module counterpart ($S$ is a curve), following
\cite[Prop.\,3.12]{FMF1}, we say that a perverse sheaf is \emph{torsion} if it belongs to
the subcategory $\perv(\pOXS)_\rt$ of $\perv(\pOXS)$ whose objects $F$ satisfy $\codim p(\supp F)\geq 1$ (\cf \cite[Cor.\,3.1]{FMF1} for this condition), while a perverse sheaf is called {\emph{strictly perverse} if it belongs to the full subcategory $\perv(\pOXS)_{\tf}$ of $\perv(\pOXS)$ whose objects~$F$ satisfy
$Li^*_sF\in\perv(\C_X)$ for all $s\in S$. The category $\perv(\pOXS)_\rt$ is a full thick abelian subcategory of the category $\perv(\pOXS)$.

We denote by $\rD^\rb_{\cc}(\pOXS)_{\rt}$ the thick subcategory of
$\rD^\rb_{\cc}(\pOXS)$ whose objects have support in $X\times T$, where $T$ is a subset of $S$ with $\dim T=0$ or, equivalently, whose perverse cohomologies belong to
$\perv(\pOXS)_\rt$.

Given an object $F$ of $\rD^\rb_{\cc}(\pOXS)$, the functor $\bD$ defined by
\[
\bD(F)=\rh_{\pOXS}(F,\pOXS)[2d_X]
\]
provides a duality in $\rD^\rb_{\cc}(\pOXS)$,
which is however not $t$-exact with respect to the perverse $t$-structure.
For example, if $F$ is a torsion perverse sheaf, then $\bD(F)\simeq {}^p\shh^1(\bD(F))[-1]$
(it is a perverse sheaf shifted in degree $1$),
while if~$F$ is a strictly perverse sheaf, $\bD(F)\simeq {}^p\shh^0(\bD(F))$ is perverse too.

Let us recall that an object $F$ of $\rD^\rb_{\cc}(\pOXS)$ is called
\emph{dual perverse} if it is in the heart of the $t$-structure $\pi$,
which by definition is the $t$-structure dual
to the perverse $t$-structure introduced in \cite[\S2.7]{MFCS1}.
By \cite[Lem.\,1.4]{MFCS2}, a complex $F\in \rD^\rb_{\cc}(\pOXS)$
is perverse and dual perverse if and only if it is strictly perverse.

\subsection{The relative solution functor}\label{subsec:solfunctor}
The solution functor for a coherent $\DXT$-module or an object of $\rD^\rb_\coh(\DXT)$ is defined by
\begin{align*}
\pSol_X: \rD^{\rb}_{\coh}(\DXT)&\to \rD^\rb(\pOXT) \\
\shm &\mto \rh_{\DXT}(\shm,\sho_{\XT})[d_X]
\end{align*}
By \cite[Th. 3.7]{MFCS1}, when restricted to $\rD^{\rb}_{\hol}(\DXT)$ the solution functor $\pSol_X$ takes values in $\rD^\rb_{\cc}(\pOXT)$
and by \cite[Cor.\,4.3]{FMF1},}
is $t$-exact with respect to the $t$-structure $\Pi$ in $ \rD^{\rb}_{\hol}(\DXT)$ and the perverse one
$p$ in $\rD^\rb_{\cc}(\pOXT)$.

\subsection{Regular holonomic complexes of \texorpdfstring{$\DXS$}{DXS}-modules}\label{S1d}
In this section, we review the notion of relative regularity as introduced in \cite[\S2.1]{MFCS2} and recall the fundamental example of relative $\DXS$-modules of D-type.

\begin{definition}[Regularity, {\cite[Def.\,2.1]{MFCS2}}]
A~holonomic $\DXS$-module $\shm$ is said to be \emph{regular} if, for any $s_o\in S$, the object $Li^*_{s_o}\shm$ of $\rD^\rb_\hol(\shd_X)$ has regular holonomic cohomologies.
\end{definition}

\begin{example}\mbox{}
\begin{enumerate}

\item{In Example \ref{exam:fsd} let us assume that $M$ is regular. Then the $\DXS$-module generated by $\shm$ is regular.}
\item{In Example \ref{exam:mtm}, assume that the mixed twistor $\shd$-module is regular in the sense of \cite[Def.\,4.1.2]{Sabbah05}. Then the underlying holonomic $\DXS$-module $\shm$ is regular.}
\end{enumerate}
\end{example}

According to \cite[\S2.1]{MFCS2}, we say that an object $\shm\in\rD^\rb_{\hol}(\DXS)$ is \emph{regular} if each of its cohomology modules is regular.

\begin{remark}
An object $\shm$ of $\rD^\rb_{\hol}(\DXS)$ is regular if and only if, for each $s_o\in S$, the object $Li^*_{s_o}\shm$ of $\rD^\rb_\hol(\shd_X)$ has regular holonomic cohomology. Indeed, we argue by induction on the amplitude of the complex~$\shm$.
Without loss of generality, we may assume that $\shm\in \rD^{\geq 0}_{\hol}(\DXS)$ and we consider the following distinguished triangle
\[
\shh^0\shm\to\shm\to \tau^{\geq1}\shm\To{+1}
\]
(where $\tau^{\geq 1}$ is the truncation functor with respect to the natural $t$-structure on $\rD^\rb_{\hol}(\DXS)$). We deduce $\shh^{-1}Li^*_{s_o}\shh^0(\shm)\simeq \shh^{-1}Li^*_{s_o}(\shm)$ and an exact sequence
\[
0\to \shh^0 Li^*_{s_o}\shh^0\shm\to
\shh^0 Li^*_{s_o}\shm\to \shh^0 Li^*_{s_o}\tau^{\geq 1}\shm\to 0.
\]
(Note that $\shh^kLi^*_{s_o}\shh^0(\shm)=0$ for $k\neq0, -1$.) The assertion follows from the induction hypothesis and the property that the category of regular holonomic $\DX$-modules is closed under sub-quotients in the category $\Mod_{\coh}(\DX)$.
\end{remark}

We recall the following result in \cite{MFCS2}:

\begin{proposition}[\cf {\cite[Cor.\,2.4]{MFCS2}}]\label{Dirim}
Let $f: Y\to X$ be a proper morphism of complex manifolds. Then, for each $\shm\in\rD^\rb_{\rhol}(\DYS)$ whose cohomology is $f$-good, the pushforward $\Df_*\shm$ is an object of $\rD^\rb_{\rhol}(\DXS)$.
\end{proposition}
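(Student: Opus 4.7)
The statement being a corollary of results in \cite{MFCS2}, my plan is to indicate the natural route rather than to reprove everything from scratch. The approach rests on three ingredients: preservation of holonomicity by proper direct image in the relative setting, a base change compatibility between the relative direct image $\Df_*$ and the fiber functors $Li^*_{s_o}$, and the classical absolute regular Riemann--Hilbert theorem (or more precisely, the preservation of regular holonomicity under proper direct image of ordinary $\shd$-modules, due to Kashiwara and Mebkhout). The induction will be on the amplitude of $\shm$, reducing by standard truncation triangles to the case of a single regular holonomic $\DYS$-module concentrated in degree zero.

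First, I would verify that $\Df_*\shm \in \rD^\rb_{\hol}(\DXS)$. Since $f$ is proper, so is $f\times\id_S: \YS\to\XS$, and the relative $\DYS$-module direct image preserves coherence and, via the standard characteristic-variety argument (using Proposition \ref{Charhol}), also holonomicity: writing $\Char(\shm)\subset\Lambda\times S'$ with $\Lambda\subset T^*Y$ closed $\C^*$-conic Lagrangian and $S'\subset S$ a closed analytic subset, the characteristic variety of $\Df_*\shm$ is contained in $f_*^\pi f^{*\pi}(\Lambda)\times S'$ with $f_*^\pi, f^{*\pi}$ the usual cotangent maps, hence contained in a closed Lagrangian of the form $\Lambda'\times S'$ with $\Lambda'\subset T^*X$.

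Second, I would establish the base change isomorphism
\[
Li^*_{s_o}\, \Df_*\shm \;\simeq\; \Df_*\, Li^*_{s_o}\shm
\]
in $\rD^\rb_\hol(\shd_X)$, for each $s_o\in S$. The morphism is constructed in the usual way from the projection formula, using that the inclusion $i_{s_o}: X\times\{s_o\}\hto\XS$ (resp.\ for $Y$) is non-characteristic for $\Df_*\shm$ (resp.\ for $\shm$), since these are relative objects. Once the morphism is built, it is enough to check it at stalks, where it reduces to a flat base change between $p_X^{-1}\sho_S$ and the residue field $\sho_{S,s_o}/\mathfrak{m}_{s_o}$ along the proper map $f\times\id$; properness is what makes this formal, since $R(f\times\id)_*$ then commutes with the derived tensor product by $p^{-1}(\sho_S/\mathfrak{m}_{s_o})$.

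Third, I would conclude by applying the absolute regular case. By hypothesis $Li^*_{s_o}\shm\in\rD^\rb_{\rhol}(\shd_Y)$ for every $s_o\in S$, so the classical theorem on proper direct image of regular holonomic $\shd$-modules yields $\Df_*Li^*_{s_o}\shm\in\rD^\rb_{\rhol}(\shd_X)$. Combined with the base change isomorphism of the previous step, this shows $Li^*_{s_o}\Df_*\shm\in\rD^\rb_{\rhol}(\shd_X)$ for every $s_o\in S$, which is the defining condition for $\Df_*\shm$ to lie in $\rD^\rb_{\rhol}(\DXS)$.

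The genuine technical point is the base change isomorphism: the rest is either formal or cited. In the relative setting this is less standard than in the absolute one, and the cleanest justification seems to go through a Koszul-type presentation of $\sho_S/\mathfrak{m}_{s_o}$ as a $p^{-1}\sho_S$-module combined with properness of $f\times\id_S$; I expect this is exactly the verification carried out in \cite[Cor.\,2.4]{MFCS2} to which the authors refer.
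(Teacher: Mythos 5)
The paper does not prove this proposition: it is cited verbatim from \cite[Cor.\,2.4]{MFCS2}, so there is no in-paper proof to compare against. Your reconstruction is nonetheless the natural route, dictated by the very definition of relative regularity in Section~\ref{S1d} (which tests regularity through the family $Li^*_{s_o}$) and by the Remark following it, which says that an object of $\rD^\rb_\hol(\DXS)$ is regular if and only if each $Li^*_{s_o}\shm$ has regular holonomic cohomology. Your three steps --- (i) relative holonomicity is preserved by proper direct image, (ii) base change $Li^*_{s_o}\Df_*\shm\simeq\Df_*Li^*_{s_o}\shm$, (iii) invoke the absolute Kashiwara--Mebkhout theorem --- combine correctly, and I have no doubt this is substantially what the cited reference does.

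Two small imprecisions, neither of which creates a gap. First, your description ``$\Char(\shm)\subset\Lambda\times S'$ with $S'\subset S$ a closed analytic subset'' conflates the definition of holonomicity ($\Char(\shm)\subset\Lambda\times S$, with $S$ the whole parameter space) with the finer description of Proposition~\ref{Charhol} ($\Char(\shm)=\bigcup_i\Lambda_i\times T_i$); the pushforward estimate on $\Char$ still works, but you should phrase the containment as being in $\Lambda'\times S$. Second, properness is not what makes the projection formula for $R(f\times\id_S)_*$ against the pullback sheaf $p^{-1}(\sho_S/\mathfrak{m}_{s_o})$ work --- that commutation is essentially formal (cf.\ \cite[(2.6.6)]{KS1} in the appropriate form); properness is instead what guarantees that $\Df_*$ preserves $\DXS$-coherence and hence sends $\rD^\rb_\hol(\DYS)$ into $\rD^\rb_\hol(\DXS)$, which you need for step~(i) and so that $Li^*_{s_o}\Df_*\shm$ even lands in $\rD^\rb_\hol(\shd_X)$. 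The logic of the proposal is otherwise sound.
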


\subsubsection{Proof of Theorem \ref{inverseimage} for a smooth morphism}\label{subsubsec:smoothpullbackreghol}
Let $f: Y\to X$ be a smooth morphism and let $\shm$ be an object of $\rD^\rb_\rhol(\DXS)$. Due to the locality of the regular holonomic property, we may assume that $f$ is a projection $Y=Z\times X\to X$. In that case, $\shd_{(Y\to X)\times S}$ is $f^{-1}\DXS$-flat, so $\shh^j\Df^*\shm\simeq\Df^*\shh^j\shm$ for every $j$, and we can assume that $\shm=\shh^0\shm$. As in the absolute case one checks that $\Char(\Df^*\shm)=T^*_ZZ\times\Char(\shm)$, so $\Df^*\shm
\in \rD^\rb_\hol(\DYS)$. Moreover, the commutativity of
\[
\xymatrix{
Y
\ar[r]^-{i_s} \ar[d]_{f}
&
\YS \ar[d]^{f} \\
X \ar[r]^-{i_s} & X\times S \\
}
\]
implies that $Li^*_s\Df^*(\shm)\simeq \Df^*Li_s^*\shm$, and the latter is known to be regular holonomic on $Y$. Therefore, Theorem \ref{inverseimage} is proved for $f$ smooth.\qed

\subsubsection{\texorpdfstring{$\DXS$}{DXS}-modules of D-type}
They are the fundamental examples of regular holonomic $\DXS$-modules, so we recall their definition. Let $D$ be a normal crossing divisor in $X$ and let $j:X^*:=X\moins D\hto X$ denote the inclusion
(we will also denote by $j$ the morphism $j\times {\id}_S$).
Let~$F$ be a coherent $S$\nobreakdash-locally constant sheaf on $\XsS$ and let $(V,\nabla)=(\sho_{\XsS}\otimes_{\pOXS}\nobreak F,\rd_{X/S})$ be the associated coherent $\sho_{\XsS}$-module with flat relative connection. There exists a coherent $\sho_S$-module~$\cG$ such that, if $U$ is any contractible open set of $X^*$, then $F_{|\US}\simeq p_U^{-1}\cG$.

Let $\varpi:\wt X\to X$ denote the real oriented blowing up of $X$ along the components of~$D$. Denote by $\wtj:X^*\hto\wt X$ the inclusion, so that $j=\varpi\circ\wtj$. Let $x^o\in D$, $\wt x^o\in\varpi^{-1}(x^o)$ and let $s^o\in S$. Choose local coordinates $(x_1,\dots,x_n)$ at $x^o$ such that $D=\{x_1\cdots x_\ell=0\}$ and consider the associated polar coordinates $(\rhog,\thetag,\boldsymbol{x}'):=(\rho_1,\theta_1,\dots,\rho_\ell,\theta_\ell,x_{\ell+1},\dots,x_n)$ so that $\wt x^o$ has coordinates $\rhog^o=\nobreak0$, $\thetag^o$, $\boldsymbol{x}^{\prime o}=0$.

A local section $\wt v$ of $(\wtj_*V)_{(\wt x^o,s^o)}$ is said to have \emph{moderate growth} if for some system of generator of $\cG_{s^o}$, and some neighbourhood
\[
U_\epsilon:=\{\Vert\rhog\Vert<\epsilon,\Vert\boldsymbol{x}'\Vert<\epsilon,\Vert\thetag-\thetag^o\Vert<\epsilon\}
\]
($\epsilon$ small enough) on which it is defined, its coefficients on the chosen generators of~$\cG_{s^o}$ (these are sections of $\sho(U^*_\epsilon\times U(s^o))$ for a small enough neighbourhood~$U(s^o)$ of $s^o$ in $S$, and $U^*_\epsilon:=U_\epsilon\moins\{\rho_1\cdots\rho_\ell=0\}$) are bounded by $C\rhog^{-N}$, for some $C,N>0$.
A local section $v$ of $(j_*V)_{(x^o,s^o)}$ is said to have \emph{moderate growth}
if for each~$\wt x^o$ in $\varpi^{-1}(x^o)$, the corresponding germ in $(\wtj_*V)_{(\wt x^o,s^o)}$ has moderate growth.

On the other hand (\cf \cite[Def.\,2.10]{MFCS2}), a coherent $\DXS$-module $\shl$ is said to be of D-type with singularities along a normal crossing divisor $D\subset X$ if it satisfies the following conditions:
\begin{enumerate}
\item $\Char(\shl)\subset(\pi^{-1}(D)\times S)\cup (T^*_XX\times S)$,
\item $\shl$ is regular holonomic and strict,
\item $\shl\simeq \shl(*(D\times S))$.
\end{enumerate}

The following result is proved in \cite[Th.\,2.6, Cor.\,2.8 \& Prop.\,2.11]{MFCS2}:

\begin{theorem}\mbox{}\label{th:Dtype}
\begin{enumerate}
\item\label{th:Dtype1}
The subsheaf $\wt V$ of $j_*V$ consisting of local sections having moderate growth is stable by $\nabla$ and it is $\sho_{\XS}(*D)$-coherent. Moreover, $\wt V$ is a regular holonomic
$\DXS$-module with characteristic variety contained in $\Lambda\times S$, where $\Lambda$ is the union of the conormal spaces of the natural stratification of $(X,D)$.
\item\label{th:Dtype2}
A coherent $\DXS$-module $\shl$ is of D-type on $(X,D)$ if and only if it is isomorphic to some $\wt V$ as in \eqref{th:Dtype1}.
\end{enumerate}
\end{theorem}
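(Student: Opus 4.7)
The plan is to prove both parts simultaneously via a relative version of Deligne's construction of the canonical meromorphic extension, carried out locally on $X$ near each point of $D$.

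For part~\eqref{th:Dtype1}, I would localize to coordinates $(x_1,\dots,x_n)$ at $x^o\in D$ with $D=\{x_1\cdots x_\ell=0\}$ and to a neighborhood $U(s^o)\subset S$ on which $\cG$ is $\sho_{U(s^o)}$-free of rank $r$. On a contractible $U\subset X$ containing $x^o$, restricting to $U^*=U\moins D$, $(V,\nabla)$ becomes an $S$-local system of rank $r$ whose monodromies along the components $\{x_i=0\}$ are matrices $M_i(s)\in\GL_r(\sho_{U(s^o)})$. After shrinking $U(s^o)$, I would pick holomorphic logarithms $B_i(s)\in M_r(\sho_{U(s^o)})$ with $M_i(s)=\exp(2\pi i B_i(s))$, and define a candidate $\wt V_{\mathrm{loc}}$ as the $\sho_{\XS}(*D)$-submodule of $\wtj_*V$ generated by the multivalued sections $\exp\bigl(-\sum_i B_i(s)\log x_i\bigr)\cdot e$, for $e$ a chosen flat frame. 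A direct computation gives $\partial_{x_i}$ acting as $-x_i^{-1}B_i(s)$ on this frame, so $\wt V_{\mathrm{loc}}$ is $\nabla$-stable and $\sho_{\XS}(*D)$-locally free of rank $r$. The identification $\wt V_{\mathrm{loc}}=\wt V$ then follows from a relative Deligne-type asymptotic argument: any multivalued flat section with moderate growth, uniform in $s$ on compact subsets of $U(s^o)$, expands in the fixed generators with coefficients in $\sho_{\XS}(*D)$. The characteristic variety bound $\Char(\wt V)\subset\Lambda\times S$ is read off from the annihilators $x_i\partial_{x_i}+B_i(s)$. For regular holonomicity, $i^*_{s_o}\wt V$ is by construction the classical Deligne meromorphic extension of $(V_{|X^*\times\{s_o\}},\nabla_{s_o})$ and hence regular holonomic on $X$, while strictness is automatic from $\sho_{\XS}(*D)$-local freeness.

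For part~\eqref{th:Dtype2}, the ``if'' direction is contained in \eqref{th:Dtype1}. For the converse, given $\shl$ of D-type, condition (a) on the characteristic variety yields that $\shl|_{\XsS}$ is $\sho_{\XsS}$-coherent with integrable relative connection, hence isomorphic to some $(V,\nabla)=\sho_{\XsS}\otimes_{\pOXS}F$ as in the statement. Condition (c) then produces a morphism $\shl\to j_*V$ which is injective because strictness forbids sections supported on $D\times S$. I would show that its image lies in $\wt V$ by invoking the conservativity of the family $(Li^*_{s_o})_{s_o\in S}$ combined with the absolute Deligne theorem applied fiber by fiber: each $i^*_{s_o}\shl$ is regular holonomic and meromorphic along $D$, hence embeds in the Deligne extension of $V_{s_o}$, and this embedding lifts to an inclusion $\shl\hto\wt V$ by strictness. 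Equality follows because both are $\sho_{\XS}(*D)$-coherent of the same generic rank and coincide on $\XsS$, so their quotient is supported on $D\times S$ while being a submodule of a meromorphic connection, and therefore vanishes.

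The principal technical obstacle is controlling the relative Deligne construction when eigenvalues of $B_i(s)$ cross integers as $s$ varies, which prevents any uniform normalization of the fundamental matrix on $U(s^o)$. One absorbs these crossings into the $\sho_{\XS}(*D)$-structure via integer matrix shifts $B_i(s)\to B_i(s)+N_i$, and checks that the resulting moderate-growth estimates remain uniform on compact subsets of $S$. This uniformity is exactly what distinguishes the relative statement from a pointwise application of the classical Deligne theorem and underpins both the $\sho_{\XS}(*D)$-coherence of $\wt V$ and the regular holonomicity of the resulting $\DXS$-module.
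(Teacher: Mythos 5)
The paper does not prove this theorem; it is recalled verbatim from \cite[Th.\,2.6, Cor.\,2.8 \& Prop.\,2.11]{MFCS2}, so there is no internal proof to compare against. That said, your relative Deligne-lattice strategy is the natural one and is close in spirit to the source, but the write-up has a mislocated difficulty and a genuine gap.

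Your diagnosis of the ``principal technical obstacle'' is a red herring. Once you tensor with $\sho_{\XS}(*D)$, the $\sho_{\XS}(*D)$-module generated by the twisted frame $\exp\bigl(-\sum_i B_i(s)\log x_i\bigr)\cdot e$ is insensitive to integer shifts $B_i \mapsto B_i + N_i$: those shifts change the Deligne \emph{lattice}, not the meromorphic extension. So eigenvalues of $B_i(s)$ crossing $\Z$ do not threaten the identification $\wt V_{\mathrm{loc}} = \wt V$. A local holomorphic logarithm $B_i(s)$ does exist (Dunford integral on a small disc in $S$, choosing a branch of $\log$ avoiding $\sigma(M_i(s^o))$), and commuting choices can be made since the $M_i(s)$ commute; you use this implicitly but should say so. The difficulty that actually bites in \cite{MFCS2}, and which your proposal avoids rather than resolves, is that the eigenvalue \emph{functions} of $B_i(s)$ are only algebraic over $\sho_S$ (they can branch as $s$ varies), so one cannot holomorphically diagonalize or reduce to a rank-one normal form over $S$; this is precisely why the finite ramification $\delta\colon S'\to S$ enters (compare the proof of Lemma~\ref{D-type 5} above, which invokes \cite[Cor.\,2.8]{MFCS2} exactly for this normal form). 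That tool is entirely missing from your argument.

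The real gap is in the converse direction of~\eqref{th:Dtype2}. You obtain the injection $\shl \hookrightarrow j_*V$ (though note: injectivity comes from $\shl\simeq\shl(*(D\times S))$ killing the $\Gamma_{D\times S}$-part, not from $\pOXS$-strictness), and you want the image to land in $\wt V$, i.e.\ moderate growth \emph{uniformly} in $(x,s)$ near $(x^o,s^o)$. Your argument is fiberwise: each $i^*_{s_o}\shl$ lands in the absolute Deligne extension. But the passage from a fiberwise bound $|h_j(\cdot,s_o)|\le C(s_o)\rhog^{-N(s_o)}$ to a bound with $C,N$ locally uniform in $s_o$ is exactly what the relative statement requires, and ``lifts to an inclusion $\shl\hto\wt V$ by strictness'' does not establish it: $\pOXS$-flatness is a statement about torsion, not about growth estimates. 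One either has to do the asymptotic analysis of the regular-singular relative system with $s$-dependent coefficients and check the constants are locally uniform, or reduce after ramification to the explicit normal form $\shd_{\XS'}\big/\bigl(\sum_i\shd_{\XS'}(x_i\partial_{x_i}-\alpha_i(s'))+\sum_{i>\ell}\shd_{\XS'}\partial_{x_i}\bigr)$ where the estimate is immediate. Without one of these, $\shl\subset\wt V$ is unproven; the final step (equality from generic rank plus $\sho(*D)$-coherence) is fine once the inclusion is available.

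Two smaller points. First, you tacitly assume $\cG$ is $\sho_S$-free; the statement allows arbitrary coherent $\cG$, and if $\cG$ has $\sho_S$-torsion then $\wt V$ will not be strict, so part~\eqref{th:Dtype2} should be read as characterizing D-type modules among the $\wt V$'s with $F$ locally free (this is consistent with Theorem~\ref{Dt}, which restricts to locally free $p^{-1}_{X^*}\sho_S$-modules). Second, $\DXS$-coherence of $\wt V$ is asserted but not proved; it follows from $\sho_{\XS}(*D)$-coherence plus $\nabla$-stability by the relative analogue of the absolute argument, and is worth a sentence.
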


\section{The relative Riemann-Hilbert functor \texorpdfstring{$\RH^S$}{RHS}}\label{S2}

In this section we recall the definition of the relative Riemann-Hilbert functor $\RH^S(\cbbullet)$ introduced in \cite{MFCS2} and state some supplementary results needed in the sequel.

\subsection{Relative subanalytic sites and relative subanalytic sheaves}\label{subsec:relsubanalytic}
For details on this subject we refer to \cite{TL} and \cite{EP}. We also refer to \cite{KS5} as a foundational paper and to \cite{KS3} for a detailed exposition on the general theory of sheaves on sites.

Let $X$ and $T$ be real or complex analytic manifolds. One denotes by $\Op(\XT)$ the family of open subsets of $\XT,$ by $\Op((\XT)_{\sa})\subset \Op (\XT)$ the family of open subanalytic sets; $\sht:=\Op^\mathrm{c}((\XT)_{\sa})$ denotes the family of relatively compact open subanalytic subsets of $\XT$ and $\sht'\subset \sht$ denotes the family of finite unions of relatively compact open subanalytic sets of the form $U\times V$.

The product $\XT$ is both a $\sht$- as well as a $\sht'$-space.
The associated sites $({\XT})_{\sht}$ and $({\XT})_{\sht'}$ are, respectively, the subanalytic site $(\XT)_{\sa}$, for which the coverings of an element $\Omega\in \Op((\XT)_{\sa})$ are the locally finite coverings with elements in $\sht$, and the site denoted by $X_{\sa}\times T_{\sa}$, for which the coverings of $\Omega\in\sht'$ are the coverings with elements in $\sht'$ which admit a finite subcovering.

We shall denote by $\rho_{T}$ the natural functor of sites $\rho_T:\XT \to (\XT)_{\sa}$ associated to the inclusion $\Op_{\sa}(\XT) \subset \Op(\XT)$.
Accordingly, we shall consider the associated functors $\rho_{T*}, \rho_T^{-1}, \rho_{T!}$.

We shall also denote by $\rho'_T:\XT \to X_{\sa}\times T_{\sa}$ the functor of sites associated to the inclusion $\sht'\subset \Op (\XT)$. Following \cite{KS3} we have functors~$\rho'_{T*}$ and $\rho'_{T !}$ from $\Mod(\CC_{\XT})$ to $\Mod(\CC_{X_{\sa}\times T_{\sa}})$.
We simply denote by~$\rho$, \resp $\rho'$, the previous morphism when there is no ambiguity.

Subanalytic sheaves are defined on the subanalytic site of a real analytic manifold, and relative subanalytic sheaves are defined on the subanalytic site $X_{\sa}\times T_{\sa}$. We refer to \cite{TL}
for the detailed construction of the relative subanalytic sheaves $\Db^{t,T}_{\XT}$ (where $X$ and $T$ are real analytic) and $\sho^{t,T}_{\XT}$ in the complex framework (denoted $\Db^{t,T,\sharp}_{\XT}$ and 
 $\sho^{t,T,\sharp}_{\XT}$ in \cite{TL}).

They are both $\rho'_!\shd_{\XT}$-modules (either in the real or the complex case) as well as a $\rho'_*\pOXT$-modules when $T$ is complex.

If $\Db^t_{\XT}$ denotes the subanalytic sheaf of tempered distributions introduced by Kashiwara-Schapira in \cite{KS5}, we have, for $U\in\Op(X_{\sa})$ and $V\in\Op(T_{\sa})$
\begin{align*}
\Gamma(U\times V; \Db^{t,T}_{\XT})&=\varprojlim_{W\Subset V}\Gamma(U\times W; \Db^t_{\XT})\\
&\simeq \Gamma (X\times V; \rho'^{-1} \Gamma_{U\times T}\Db^t_{\XT})\\
&\simeq \Gamma(X\times V; \tho(\C_{U\times T}, \Db_{\XT})).
\end{align*}

Moreover, when~$X$ is also complex, considering the complex conjugate structure $\overline{X}$ on $X$ (\resp $\overline{T}$ on~$T$) and the underlying real analytic \hbox{structure}~$X_{\R}$ (\resp $T_{\R}$),
we have
\[
\sho^{t,T}_{\XT}=\Rhom_{\rho'_!\shd_{\overline{X}\times\overline{T}}}(\rho'_!\sho_{\overline{X}\times\overline{T}},\shd^{t,T}_{\XT}),
\]
where we omit the reference to the real structures.

\subsection{The functors \texorpdfstring{$\TH^S$}{THS} and \texorpdfstring{$\RH^S$}{RHS}}\label{subsec:RHS}

\subsubsection{The functor \texorpdfstring{$\TH^S$}{THS}}
When $X$ is a real analytic manifold and $S$ is a complex curve,  we define the triangulated functor $$\TH^S_X: \rD^\rb_\rc(\pOXS)^\mathrm{op}\to\rD^+(\shd_{X\times S_{\R}/S})$$ given by  
$$F \mto \TH^S_X(F):=
\rho'^{-1}\rh_{\rho'_*\pOXS}(\rho'_*F, \Db^{t,S}_{\XS})$$
where $\shd_{X\times S_{\R}/S}$ denotes the sheaf of linear differential operators with real analytic coefficients on $X\times S_{\R}$ which commute with $\pOXS$.

Recall that, as a consequence of \cite[Prop. 4.7]{TL} we have \begin{align*}
\TH^S_X(\pOXS\otimes \C_{H\times V})&\simeq \Rhom(\C_{X\times V}, \tho(\C_{H\times S}, \Db_{\XS}))\\
&\simeq R\Gamma_{X\times V}\tho(\C_{H\times S}, \Db_{\XS})\quad(\text{\cite[(2.6.9)]{KS1}})
\end{align*}
for any relatively compact locally closed, \resp open, subanalytic subsets $H$ of~$X$, \resp $V$ of $S$. If $H=Z$ is closed, we have $\tho(\C_{Z\times S}, \Db_{\XS})=\Gamma_{Z\times S}\Db_{\XS}$ by definition. We conclude:
\begin{equation}\label{ETMF1}
\TH^S_X(\pOXS\otimes \C_{Z\times V})\simeq \Gamma_{Z\times V} \Db_{\XS}.
\end{equation}
On the other hand, if $H=\Omega$ is open, since $\tho(\C_{\Omega\times S}, \Db_{\XS})$ is a c-soft sheaf, we obtain
\begin{equation}\label{ETMF2}
\TH^S_X(\pOXS\otimes \C_{\Omega\times V})\simeq \Gamma_{X\times V} \tho(\C_{\Omega\times S}, \Db_{\XS}).
\end{equation}

\subsubsection{The functor \texorpdfstring{$\RH^S$}{RHS}}\label{subsubsec:real}
If $X$ is a complex manifold and $S$ is a complex curve,
$\RH^S_X:\rD^\rb_\rc(\pOXS)^\mathrm{op}\to\rD^\rb(\DXS)$ is given by the assignment
\begin{equation}
\begin{split}\label{RHS1}
F\mto \RH^S_X(F)&:=
\rho'^{-1}\rh_{\rho'_*\pOXS}(\rho'_*F, \sho^{t,S}_{\XS})[d_X]\\
&\phantom{:}\simeq \Rhom_{\shd_{\overline{X}\times\overline{S}}}(\sho_{\overline{X}\times\overline{S}}, \TH^S_X(F))[d_X],
\end{split}
\end{equation}
the last isomorphism being called here ``realification procedure'' for short (\cf\cite[(3.16)]{MFCS2}).

We collect below some results in \cite{MFCS2} which will be useful in the sequel.
The first gives the behaviour of $Li^*_s$ with respect to $\RH^S$.

\begin{proposition}[\cf{\cite[Prop.\,3.29]{MFCS2}}]\label{com}
For each $s\in S$ there is an isomorphism of functors on $\rD^\rb_{\rc}(p_X^{-1}\sho_S)$
\[
Li^*_s\RH^S_X[-d_X](\cbbullet)\simeq \tho(Li^*_s(\cbbullet), \sho_X),
\]
where $X$ is identified to $X\times\{s\}$ and $X_{\sa}$ is identified to $X_{\sa}\times \{s\}$.
\end{proposition}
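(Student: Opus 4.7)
The plan is to reduce the desired isomorphism to a verification on a generating family of $\rD^\rb_\rc(\pOXS)$ and then to a fiberwise restriction statement for the relative tempered sheaves. First, I would observe that both $F\mapsto Li^*_s\RH^S_X[-d_X](F)$ and $F\mapsto \tho(Li^*_sF,\sho_X)$ define triangulated contravariant functors from $\rD^\rb_\rc(\pOXS)$ to $\rD^+(\shd_X)$. Since Proposition~\ref{P:AF} identifies $\rD^-_\rc(\pOXS)$ with $\rD^-(\shs)$, Remark~\ref{C:functors} allows me to construct the sought natural transformation by defining it on the subcategory $\shs$, and then to check it is an isomorphism on generators of the form $\pOXS\otimes\CC_{H\times V}$, with $H\subset X$ a relatively compact locally closed subanalytic subset and $V\subset S$ a relatively compact open subanalytic subset.

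Second, I would build the natural morphism from the canonical restriction of the relative tempered sheaves $\Db^{t,S}_{\XS}$ and $\sho^{t,S}_{\XS}$ along the inclusion $\{s\}\hookrightarrow S$: such a map arises from the $\pOS$\nobreakdash-module structure together with the quotient $\pOS\to\pOS/\mathfrak m_s$. After passage through the realification functor $\Rhom_{\shd_{\overline{X}\times\overline{S}}}(\sho_{\overline{X}\times\overline{S}},\cbbullet)$ of \eqref{RHS1}, this produces the required morphism
\[
Li^*_s\RH^S_X[-d_X](F)\longrightarrow \tho(Li^*_sF,\sho_X).
\]

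Third, I would check this morphism is an isomorphism on each generator. For $s\notin V$, one has $Li^*_s(\pOXS\otimes\CC_{H\times V})=0$, so both sides vanish. For $s\in V$, $Li^*_s(\pOXS\otimes\CC_{H\times V})\simeq \CC_H$, while \eqref{ETMF1} (if $H=Z$ is closed) or \eqref{ETMF2} (if $H=\Omega$ is open) describes $\TH^S_X(\pOXS\otimes\CC_{H\times V})$ explicitly as $\Gamma_{Z\times V}\Db_{\XS}$, respectively $\Gamma_{X\times V}\tho(\CC_{\Omega\times S},\Db_{\XS})$. Fiberwise restriction reduces these to $\Gamma_Z\Db_X$, respectively $\tho(\CC_\Omega,\Db_X)$, and the realification step converts them into $\Gamma_Z\sho_X=\tho(\CC_Z,\sho_X)$ and $\tho(\CC_\Omega,\sho_X)$, which is precisely the right-hand side evaluated at these generators.

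The main obstacle will be the fiberwise restriction property underlying Step~2: that $Li^*_s\Db^{t,S}_{\XS}$ is quasi-isomorphic to the absolute tempered distribution sheaf on $X\times\{s\}$, and that $Li^*_s$ commutes with the realification functor. This compatibility requires a careful analysis of the subanalytic sites $X_\sa\times S_\sa$ and their base change to the point $s\in S$, and ultimately amounts to the statement that a tempered distribution on $\XS$ depending holomorphically on the parameter $s$ restricts to a tempered distribution on the fiber. Once this fiberwise-restriction lemma is established at the level of $\Db^{t,S}$, the corresponding statement for the tempered holomorphic companion follows by realification, and the extension from generators to general $F\in\rD^\rb_\rc(\pOXS)$ is formal via Remark~\ref{C:functors}.
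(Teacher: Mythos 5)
The paper gives no proof of Proposition~\ref{com}; it simply cites \cite[Prop.\,3.29]{MFCS2}, so there is no in-text argument to compare against, and I can only judge your outline on its own terms. Your strategy---use Proposition~\ref{P:AF} and Remark~\ref{C:functors} to reduce to objects of $\shs$, then compute both sides on generators $\pOXS\otimes\CC_{H\times V}$ via fiberwise restriction of the relative tempered sheaves and realification---is a reasonable plan of attack. The crux, which you correctly identify, is the compatibility of $Li^*_s$ with the relative tempered sheaves: one needs (i) that $i_s^{*}\Db^{t,S}_{\XS}$ identifies with $\Db^{t}_X$ in the appropriate site-theoretic sense, and (ii) that $Li^*_s$ commutes with the realification $\rh_{\shd_{\overline X\times\overline S}}(\sho_{\overline X\times\overline S},\cbbullet)$. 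Neither (i) nor (ii) is proved in your proposal. Note also that (ii) is not a formal consequence of (i): the realification complex lives in the $\overline{S}$-direction, which is exactly the direction being collapsed by $Li^*_s$, so the commutation needs its own argument (for instance an explicit Dolbeault-type resolution in $\bar s$ together with a base-change check along $\{s\}\hto S$).

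There is also an unacknowledged gap in your verification on generators. For $s\notin V$ you assert that ``both sides vanish,'' but you only justify the vanishing of the right-hand side, which follows from $Li^*_s(\pOXS\otimes\CC_{H\times V})=0$. The vanishing of the left-hand side is a statement about $Li^*_s\RH^S_X(\pOXS\otimes\CC_{H\times V})$ and does not follow from the vanishing of $Li^*_s$ of the argument (that would presuppose the commutation you are trying to prove). When $s\in\partial V$, the sheaf $\Gamma_{Z\times V}\Db_{\XS}$ given by \eqref{ETMF1} has non-trivial stalks at points of $Z\times\partial V$, so one must still argue that after realification---i.e.\ after passing to the $\overline\partial_s$-closed part---the result vanishes there. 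This is true, by an identity-theorem argument in the $s$-variable, but it is a step of the proof, not a free consequence. As written, the proposal is therefore a credible sketch with the main technical lemmas still open, rather than a complete proof.
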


\begin{theorem}[\cf {\cite[Th.\,3]{MFCS2}}]\label{TRHS}
Let $F\in\rD^\rb_{\cc}(p_X^{-1}\sho_S)$. Then $\RH^S_X(F)\in\rD^\rb_{\rhol}(\DXS)$ and we have an isomorphism $F\simeq \pSol_X(\RH^S_X(F))$ which is functorial in $\rD^\rb_{\cc}(p_X^{-1}\sho_S)$.
\end{theorem}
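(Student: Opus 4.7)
The plan is to prove the two assertions of Theorem~\ref{TRHS} simultaneously by combining a local description of $F$, the realification procedure, and a fiberwise reduction to Kashiwara's absolute regular Riemann-Hilbert correspondence via the conservative family $(Li^{*}_{s})_{s\in S}$ on $\rD^\rb_\cc(\pOXS)$.

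First I would construct the natural morphism $\alpha_F:F\to\pSol_X(\RH^S_X(F))$. The defining formula
\[
\RH^S_X(F)=\rho'^{-1}\rh_{\rho'_*\pOXS}(\rho'_*F,\sho^{t,S}_{\XS})[d_X]
\]
carries a tautological evaluation pairing $F\otimes^{L}_{\pOXS}\RH^S_X(F)[-d_X]\to\sho^{t,S}_{\XS}\to\sho_{\XS}$, which, by the $\DXS$-linearity encoded in the subanalytic sheaf $\sho^{t,S}_{\XS}$, produces a $\DXS$-linear morphism $\alpha_F:F\to\Rhom_{\DXS}(\RH^S_X(F),\sho_{\XS})[d_X]=\pSol_X(\RH^S_X(F))$, functorial in $F\in\rD^\rb_\cc(\pOXS)$. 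To verify that $\alpha_F$ is an isomorphism I would argue fiberwise. By Proposition~\ref{com},
\[
Li^{*}_{s}\RH^S_X(F)[-d_X]\simeq\tho(Li^{*}_{s}F,\sho_X),
\]
and the base-change identity $Li^{*}_{s}\pSol_X\simeq\pSol_X Li^{*}_{s}$ (which holds because $i_s$ is non-characteristic for every holonomic $\DXS$-module) reduces $Li^{*}_{s}\alpha_F$ to the canonical morphism $Li^{*}_{s}F\to\pSol_X(\tho(Li^{*}_{s}F,\sho_X)[d_X])$. Kashiwara's absolute regular Riemann-Hilbert correspondence asserts that this latter morphism is an isomorphism in $\rD^\rb_\cc(\C_X)$ since $Li^{*}_{s}F\in\rD^\rb_\cc(\C_X)$; conservativity of the family $(Li^{*}_{s})_{s\in S}$ then yields the isomorphism globally.

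The main obstacle is the first assertion, namely that $\RH^S_X(F)$ is $\DXS$-coherent with regular holonomic cohomology. My strategy is to combine a local $\shs$-resolution with resolution of singularities. By Proposition~\ref{P:AF}, $F$ is represented locally by a bounded-above complex in $\shs$ whose terms are finite direct sums of $\pOXS\otimes\C_{\Omega\times V}$, and formulas~\eqref{ETMF1}--\eqref{ETMF2} combined with the realification procedure give an explicit description of $\RH^S_X$ on such generators. Choosing a Whitney stratification of $X$ adapted to~$F$ and applying Hironaka's theorem to the skeleton, one reduces (via proper direct image, which preserves $\rD^\rb_\rhol(\DXS)$ by Proposition~\ref{Dirim}) to the case in which $F$ is built from extensions of $S$-locally constant coherent sheaves on the complement of a normal crossing divisor $D\subset X$. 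In this situation $\RH^S_X(F)$ can be identified with the moderate-growth extension $\wt V$ of Theorem~\ref{th:Dtype}\eqref{th:Dtype1}, which is precisely the D-type regular holonomic $\DXS$-module associated with the data, and the devissage propagates regular holonomicity to the original $F$. The regularity conclusion is finally reconfirmed by Proposition~\ref{com}: each $Li^{*}_{s}\RH^S_X(F)\simeq\tho(Li^{*}_{s}F,\sho_X)[d_X]$ is regular holonomic over $\shd_X$ by Kashiwara's theorem, which is exactly the fiberwise criterion for membership in $\rD^\rb_\rhol(\DXS)$.
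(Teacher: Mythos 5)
First note that the paper itself does not prove Theorem~\ref{TRHS}: the statement carries the citation to \cite[Th.\,3]{MFCS2} and is recalled without argument, so there is no ``paper's own proof'' to compare against. On its own merits, your outline is broadly in the spirit of the original argument in \loccit (devissage by stratification, reduction to the D-type case via resolution of singularities, fiberwise reduction to Kashiwara's absolute theorem), but several steps contain genuine gaps.

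The most serious one is your closing sentence. You claim that the fiberwise statement ``each $Li^{*}_{s}\RH^S_X(F)$ has regular holonomic cohomology'' is ``exactly the fiberwise criterion for membership in $\rD^\rb_\rhol(\DXS)$.'' This is false. Relative holonomicity requires $\Char(\shm)\subseteq\Lambda\times S$ for a single fixed Lagrangian $\Lambda\subset T^*X$ --- a global constraint that is not implied by any collection of fiberwise conditions --- and the definition of relative regularity recalled in Section~\ref{S1d} presupposes holonomicity before applying the $Li^{*}_{s}$ test. Without a separate argument establishing coherence and holonomicity of $\RH^S_X(F)$, the fiberwise check tells you nothing about membership in $\rD^\rb_\rhol(\DXS)$. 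This circularity also infects your proof that $\alpha_F$ is an isomorphism: the base-change $Li^{*}_{s}\pSol_X\simeq\pSol_X Li^{*}_{s}$ you invoke is justified by $i_s$ being non-characteristic for holonomic modules, so you need holonomicity of $\RH^S_X(F)$ first, yet you present that as the second step. Finally, the devissage you propose via the $\shs$-resolution of Proposition~\ref{P:AF} is problematic: the generators $\pOXS\otimes\C_{\Omega\times V}$ are only $\R$-constructible, and $\RH^S_X$ applied to such objects produces modules that are not $\DXS$-coherent (their microsupport is not $\C^*$-conic), so one cannot propagate holonomicity along that resolution. The correct devissage works with the open/closed triangles attached to a $\C$-analytic stratification and the extension by zero of $S$-local systems, and the reduction to the normal crossing/D-type case requires the compatibility of $\RH^S$ with proper direct image (the relative analogue of \cite[Th.\,3.28]{MFCS2}), which is a substantial statement that Proposition~\ref{Dirim} (a purely $\DXS$-module-theoretic fact) does not supply.
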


Let us fix a normal crossing divisor $D$.

\begin{theorem}[\cf {\cite[Prop.\,2.11 \& Lem.\,4.2]{MFCS2}}]\label{Dt}
The category of holonomic $\DXS$-modules $\shl$ of D-type with singularities along $D$ is equivalent to
the category of locally free $p_{X^*}^{-1}\sho_S$-modules with $X^*:=X\setminus D$
under the correspondence
\begin{equation*}
\shl\mto \shh^0\DR_X(\shl)_{|X^*\times S}, \qquad F\mto \RH^S_X(j_! \bD(F[d_X]))=\shl.
\end{equation*}
\end{theorem}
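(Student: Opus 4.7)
The plan is to combine Theorem~\ref{th:Dtype} with the compatibility of $\RH^S_X$ with the moderate growth extension construction. By Theorem~\ref{th:Dtype}\eqref{th:Dtype2}, every holonomic $\DXS$-module $\shl$ of D-type along $D$ is of the form $\wt V$, where $V=(\sho_{\XsS}\otimes_{\pOXS}F,\rd_{X/S})$ is a flat relative connection built from a locally free $p_{X^*}^{-1}\sho_S$-module $F$ on $X^*\times S$. Since $\wt V_{|X^*\times S}=V$ and the relative Poincaré lemma gives $\shh^0\DR_{X^*/S}(V)\simeq V^\nabla=F$, we obtain $\shh^0\DR_X(\shl)_{|X^*\times S}\simeq F$. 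This shows that the first functor lands in the category of locally free $p_{X^*}^{-1}\sho_S$-modules, and by Theorem~\ref{th:Dtype}\eqref{th:Dtype2} it is essentially surjective, since every such $F$ arises from some $\wt V$.

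For the inverse direction, the key point is the identification $\RH^S_X(j_!\bD(F[d_X]))\simeq\wt V$. Since $F$ is locally free of finite rank on $X^*\times S$, the definition $\bD G=\rh_{\pOXS}(G,\pOXS)[2d_X]$ simplifies to $\bD(F[d_X])\simeq F^\vee[d_X]$, hence $j_!\bD(F[d_X])\simeq j_!F^\vee[d_X]\in\rD^\rb_\cc(\pOXS)$. Using the definition \eqref{RHS1} of $\RH^S_X$ through the subanalytic sheaf $\sho^{t,S}_{\XS}$, the computation reduces, on the real oriented blow-up $\varpi:\wt X\to X$ introduced in Section~\ref{S1d}, to the determination of tempered holomorphic sections of the dual connection subject to the vanishing condition across $\varpi^{-1}(D)$ encoded by $j_!$. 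In the local polar coordinates $(\rhog,\thetag,\boldsymbol{x}')$ adapted to the normal crossing model, the resulting polynomial growth bounds coincide with those defining the moderate growth extension $\wt V$, yielding $\RH^S_X(j_!\bD(F[d_X]))\simeq\wt V$; this is precisely the content of \cite[Prop.\,2.11 \& Lem.\,4.2]{MFCS2}.

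The two functors are then quasi-inverse: starting from $F$, the first step gives $\shh^0\DR_X(\wt V)_{|X^*\times S}\simeq F$; starting from $\shl\simeq\wt V$, the construction produces $F=V^\nabla$ and then returns $\RH^S_X(j_!\bD(F[d_X]))\simeq\wt V\simeq\shl$ by the identification above. Functoriality in both directions is inherited from $\DR_X$, $j_!$, $\bD$, and $\RH^S_X$. The main obstacle is precisely the analytical step matching tempered holomorphic solutions of $j_!$-sheaves on $\XS$ with moderate growth flat sections on $\wt X\times S$; this is carried out by the polar-coordinate analysis on the real blow-up in the cited proof of \cite[Prop.\,2.11]{MFCS2}, and it is exactly the point where the normal crossing hypothesis on $D$ is essential.
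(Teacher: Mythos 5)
Your proposal is correct and follows the paper's own approach: Theorem~\ref{Dt} is stated here as a recall from \cite[Prop.\,2.11 \& Lem.\,4.2]{MFCS2} rather than reproved, and you correctly reduce the crux of the argument --- identifying $\RH^S_X(j_!\bD(F[d_X]))\simeq\RH^S_X(j_!F^\vee[d_X])$ with the moderate growth extension $\wt V$ --- to the tempered-versus-moderate-growth analysis on the real oriented blow-up carried out in those references. The one point you assert without justification is the local freeness of $F$: Theorem~\ref{th:Dtype}\eqref{th:Dtype1} only provides $F$ coherent $S$-locally constant, and to conclude local freeness (needed both so that $\shh^0\DR_X(\shl)_{|X^*\times S}$ lands in the stated category and so that $\bD(F[d_X])$ simplifies to $F^\vee[d_X]$) one must invoke the strictness condition in the definition of D-type together with the fact that $\dim S=1$, so that a coherent $\sho_S$-module which is flat is automatically locally free.
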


\subsection{Some functorial properties}
Let $f:Y\to X$ be a morphism of real or complex analytic manifolds. We denote similarly the morphism $f\times\id:\YS\to X\times S$. We consider in this section the corresponding pullback functor.

\subsubsection{Pullback with respect to \texorpdfstring{$X$}{X} for \texorpdfstring{$\TH^S$}{THS}}

\begin{proposition}\label{Ldir}
For any morphism $f:Y\to X$ of real analytic manifolds there exists a morphism of functors from $\rD^-_\rc(\pOXS)^\mathrm{op}$ to $\rD^+(\shd_{X\times S_{\R}/S})$:
\begin{equation}\label{eq:Ldir}
\Df_!\TH^S_Y(f^{-1}\cbbullet)\to \TH^S_X(\cbbullet).
\end{equation}
\end{proposition}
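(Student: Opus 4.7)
The plan is to construct the natural transformation \eqref{eq:Ldir} as a relative (over $S$) version of the classical integration morphism for tempered distributions due to Kashiwara-Schapira. The argument proceeds in three steps.

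First, by Proposition~\ref{P:AF} the functor $L:\rD^-(\shs)\to\rD^-_\rc(\pOXS)$ is an equivalence of categories, and by Remark~\ref{C:functors} it is enough to define \eqref{eq:Ldir} on the subcategory $\shs$, functorially with respect to its morphisms. The generators of $\shs$ are of the form $F=\C_{\Omega}\boxtimes\sho_V$ with $\Omega$ (resp.\ $V$) a relatively compact open subanalytic subset of $X$ (resp.\ $S$), and morphisms between such generators are extensions by zero of multiplications by sections of $\pOXS$ on overlaps. Note in particular that $f^{-1}F\simeq \C_{f^{-1}\Omega}\boxtimes\sho_V$ as $\pOYS$-modules.

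Second, for $F$ as above, formula \eqref{ETMF2} together with the definition of $\TH^S$ yields
\[
\TH^S_X(F)\simeq \Gamma_{X\times V}\tho(\C_{\Omega\times S},\Db_{\XS}),
\qquad
\TH^S_Y(f^{-1}F)\simeq \Gamma_{Y\times V}\tho(\C_{f^{-1}\Omega\times S},\Db_{\YS}).
\]
The classical integration morphism of Kashiwara-Schapira (\cite[\S7]{KS5}) provides, for $G\in\rD^\rb_\rc(\C_X)$, a natural morphism of $\shd_X$-modules $\Df_!\tho(f^{-1}G,\Db_Y)\to\tho(G,\Db_X)$. Its parametrized version over $S$---obtained by replacing $\Db$ by $\Db^{t,S}$ and integrating only in the $X$-direction of $f\times\id_S$---produces the required morphism in $\rD^+(\shd_{X\times S_\R/S})$ on generators, as the $S$-direction is untouched by the integration and hence the $\pOXS$-action is preserved. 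The action of $R\Gamma_{X\times V}$ on the target side arises from the analogous construction on $Y\times V$ via $\Df_!$ by adjunction.

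Third, functoriality in $F$ reduces to two compatibilities: the integration morphism commutes with multiplication by sections of $\pOXS$ and with extensions by zero associated to subanalytic inclusions of opens; both follow from the corresponding absolute statements in \cite{KS5}, using the $X$-directionality of the integration. Extending via Proposition~\ref{P:AF} and Remark~\ref{C:functors} then yields the desired morphism of functors on all of $\rD^-_\rc(\pOXS)$. The main technical obstacle will be handling the interplay between the subanalytic sites $X_{\sa}\times S_{\sa}$ and $(\XS)_{\sa}$ via $\rho'$, and verifying the $\rho'_*\pOXS$-linearity of the parametrized integration map; these issues are controlled by the constructions of $\Db^{t,S}_{\XS}$ as both a $\rho'_!\shd_{\XS}$-module and a $\rho'_*\pOXS$-module carried out in \cite{TL}.
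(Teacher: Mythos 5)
Your overall strategy coincides with the paper's: reduce to the additive category $\shs$ via Proposition~\ref{P:AF} and Remark~\ref{C:functors}, compute $\TH^S$ on generators of~$\shs$ via \eqref{ETMF1}--\eqref{ETMF2}, and relativize the Kashiwara--Schapira integration morphism for tempered distributions (the paper mimics \cite[Prop.\,4.3]{KS4}, with the absolute transfer module replaced by $\shd_{(Y\to X)\times S/S}$), then invoke functoriality on $\shs$.

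What you gloss over, though, is where the real technical weight sits, and you misplace the ``main obstacle.'' Remark~\ref{C:functors} needs a morphism of \emph{functors} out of $\rD^-(\shs)$, not merely a family of morphisms in $\rD^+(\shd_{X\times S_\R/S})$ indexed by objects of~$\shs$; to produce the former one must realize the construction at the level of honest complexes, as a morphism between functors $\shs^\mathrm{op}\to\mathsf{C}^+(\shd_{X\times S_\R/S})$, so that an object of $\mathsf{C}^-(\shs)$ yields a double complex whose associated simple complex is then taken. The paper does this by tensoring $\Gamma_{f^{-1}Z\times V}\Db_{\YS}^\vee$ with the Spencer resolution $\mathrm{Sp}_\sbullet(\shd_{(Y\to X)\times S/S})$ (locally free over $\shd_{\YS_\R/S}$), obtaining c-soft complexes on which $f_!$ computes $Rf_!$ termwise and represents $\Df_!$, and then writing the morphism $\varphi_{Z\times V}$ in $\mathsf{C}^+$; the open generator $\pOXS\otimes\C_{\Omega\times V}$ is then treated by taking a cokernel along the sequence $0\to\C_{\Omega\times V}\to\C_{X\times V}\to\C_{Z\times V}\to 0$. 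By contrast, the issue you flag at the end --- the interplay of $\rho'$ between $X_\sa\times S_\sa$ and $(\XS)_\sa$ and the $\rho'_*\pOXS$-linearity --- is no longer an issue once you have reduced to~$\shs$: formulas \eqref{ETMF1}--\eqref{ETMF2} express $\TH^S$ on generators as ordinary sheaves on $\XS$ (namely $\Gamma_{Z\times V}\Db_{\XS}$ and $\Gamma_{X\times V}\tho(\C_{\Omega\times S},\Db_{\XS})$), so the subanalytic-site bookkeeping has already been absorbed at that point.
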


\begin{proof}
We first define the desired morphism functorially on the category $\shs$ introduced in Section \ref{rc}. We deduce from \eqref{ETMF2} that the objects of~$\shs$ are acyclic for $\TH^S_X(\cbbullet)$ and for $\TH^S_Y(f^{-1}\cbbullet)$.

\begin{lemma}\label{LdirS}
For any morphism $f:Y\to X$ of real analytic manifolds there exist functors 
\begin{align*}
\Df_!\TH^S_Y(f^{-1}\cbbullet)_{|\shs^\mathrm{op}}:\shs^\mathrm{op} &\to \mathsf{C}^+(\shd_{X\times S_{\R}/S}) \\
\TH^S_X(\cbbullet)_{|\shs^\mathrm{op}}:\shs^\mathrm{op} &\to  \mathsf{C}^+(\shd_{X\times S_{\R}/S}).
\end{align*}
and there exists a canonical morphism of functors
\[
\Df_!\TH^S_Y(f^{-1}\cbbullet)_{|\shs^\mathrm{op}}\to \TH^S_X(\cbbullet)_{|\shs^\mathrm{op}}
\]
inducing the morphism \eqref{eq:Ldir}.
\end{lemma}

\begin{proof}
Let $\Omega$, \resp $V$, be a relatively compact open subanalytic set in $X$, \resp $S$ (here, we \hbox{consider} the real analytic structures in $X$ and $S$, also usually identified by~$-_{\R}$). We set $Z=X\moins\Omega$ and we start by considering the sheaf $G=\pOXS\otimes\C_{Z\times V}$. According to \eqref{ETMF1}, we have $\TH^S_X(G)=\Gamma_{Z\times V}\Db_{\XS}$, \hbox{regarded} as a $\shd_{X_{\R}\times S_{\R}/S}$-module.

On the other hand, the integration of distributions induces a morphism
\[
\int_f:f_!(\Db_{\YS}\otimes_{\sha_{\YS}} \omega_{\YS/S})\to \Db_{\XS}\otimes_{\sha_{\XS}} \omega_{X\times S/S}.
\]
One then mimics \cite[Prop.\,4.3]{KS4} by replacing the transfer module $\shd_{\YS\to X\times S}$ for the morphism $\YS\to X\times S$ in the absolute sense by the relative one $\shd_{(Y\to X)\times S/S}:=\sha_{\YS}\otimes_{f^{-1}\sha_{\XS}} f^{-1}\shd_{X\times S_{\R}/S}$. Let us set $\Db_{\YS}^\vee=\Db_{\YS}\otimes_{\sha_{\YS}} \omega_{\YS/S}$ and let $\mathrm{Sp}_{\sbullet}(\shd_{(Y\to X)\times S/S})$ denote the Spencer resolution of $\shd_{(Y\to X)\times S/S}$ (we recall that, for $k\in\NN$, $\mathrm{Sp}_{k}(\shd_{(Y\to X)\times S/S})$ are locally free over $\shd_{\YS_{\R}/S}$). The terms of the complex in $\mathsf{C}^+(\shd_{\YS_{\R}/S})$
\begin{align*}
\shc_{f^{-1}Z\times V}&:=(\Gamma_{f^{-1}Z\times V}\Db_{\YS}^\vee)\otimes_{\shd_{\YS_{\R}/S}}\mathrm{Sp}_{\sbullet}(\shd_{(Y\to X)\times S/S})\\
&\hphantom{:}=\Gamma_{f^{-1}Z\times V}\bigl(\Db_{\YS}^\vee\otimes_{\shd_{\YS_{\R}/S}}\mathrm{Sp}_{\sbullet}(\shd_{(Y\to X)\times S/S})\bigr)
\end{align*}
are thus c-soft sheaves. Hence, the object 
\hbox{$\Df_!\TH^S(\pOYS\otimes\C_{f^{-1}Z\times V})$} is represented by the complex
\[
f_!\shc_{f^{-1}Z\times V}\simeq\Gamma_{Z\times V}f_!\bigl(\Db_{\YS}^\vee\otimes_{\shd_{\YS_{\R}/S}}\mathrm{Sp}_{\sbullet}(\shd_{(Y\to X)\times S/S})\bigr)
\]
in $\mathsf{C}^+(\shd_{X\times S_{\R}/S})$. As in \loccit, we get a morphism in $\mathsf{C}^+(\shd_{X\times S_{\R}/S})$:
\[
\varphi_{Z\times V}:f_!\shc_{f^{-1}Z\times V}\to \Gamma_{Z\times V}\Db_{\XS}=\TH^S_X(G).
\]

We now consider the object $L=\pOXS\otimes\C_{\Omega\times V}$ of $\shs$. From the short exact sequence
\[
0\to \C_{\Omega\times V}\to \C_{X\times V}\to \C_{Z\times V}\to 0,
\]
we \emph{define} $f_!\shc_{f^{-1}(\Omega\times V)}$ as the cokernel of the natural morphism $f_!\shc_{f^{-1}Z\times V}\to f_!\shc_{f^{-1}X\times V}$ in $\mathsf{C}^+(\shd_{X\times S_{\R}/S})$.

Therefore, on the one hand, the complex $f_!\shc_{f^{-1}(\Omega\times V)}$ is a representative of \hbox{$\Df_!\TH^S_Y(f^{-1}L)$} and $\TH^S_X(L)$ is the cokernel in $\Mod(\shd_{X\times S_{\R}/S})$ of
\[
\TH^S_X(\pOXS\otimes\C_{Z\times V})\hto\TH^S_X(\pOXS\otimes\C_{X\times V}).
\]
On the other hand, by completing the commutative diagram
\[
\xymatrix@C=4mm{
0\ar[r]&f_!\shc_{f^{-1}Z\times V}\ar[d]_{\varphi_{Z\times V}}\ar[r]&f_!\shc_{f^{-1}X\times V}\ar[d]_{\varphi_{X\times V}}\ar[r]&f_!\shc_{f^{-1}(\Omega\times V)}\ar[r]&0\\
0\ar[r]&\TH^S_X(\pOXS\otimes\C_{Z\times V})\ar[r]&\TH^S_X(\pOXS\otimes\C_{X\times V})\ar[r]&\TH^S_X(L)\ar[r]&0
}
\]
we define a morphism $\varphi_{\Omega\times V}:f_!\shc_{f^{-1}(\Omega\times V)}\to\TH^S_X(L)$ in $\mathsf{C}^+(\shd_{X\times S_{\R}/S})$. Functoriality with respect to $\shs$ follows from the description of the morphisms in~$\shs$ (\cf Section \ref{rc}).
\end{proof}

We can now end the proof of Proposition~\ref{Ldir}. According to Remark~\ref{C:functors}, it is enough to extend the morphism obtained in Lemma~\ref{LdirS} as a morphism of functors from $\rD^-(\shs)$ to $\rD^+(\shd_{X\times S_{\R}/S})$. Functoriality above leads to the definition of a morphism of functors from $\mathsf{C}^-(\shs)$ to the category of double complexes indexed by $\NN^2$ of $\Mod(\shd_{X\times S_{\R}/S})$, and we obtain the desired morphism of functors by passing to the associated simple complexes.
\end{proof}

\subsubsection{Behaviour of \texorpdfstring{$\RH^S$}{RHS} by localization and pullback with respect to \texorpdfstring{$X$}{X}}

\begin{proposition}\label{RHSloc}
Let $Y$ be a complex hypersurface of $X$. Then, for any $F\in\rD^\rb_\rc(\pOXS)$ there is a natural isomorphism
\begin{equation}\label{eq:RHSloc}
\RH^S_X(F)(*(\YS))\isom \RH^S_X(F\otimes \C_{(X\setminus Y)\times S}).
\end{equation}
In particular, if $F\in\rD^\rb_\cc(\pOXS)$,
\begin{enumerate}
\item\label{RHSloc1}
$\RH^S_X(F)(*(\YS))$ belongs to $\rD^\rb_\rhol(\DXS)$.

\item\label{RHSloc2}
There is a natural isomorphism $\RH^S_X(F\otimes \C_{\YS})\!\simeq\!R\Gamma_{[\YS]}(\RH^S_X(F))$ and so $R\Gamma_{[\YS]}(\RH^S_X(F))$ also belongs to $\rD^\rb_\rhol(\DXS)$.
\item\label{RHSloc3}
If the natural morphism $\RH^S_X(F)\to\RH^S_X(F)(*(\YS))$ is an isomorphism, then so is the natural morphism $F\otimes \C_{(X\setminus Y)\times S}\to F$.
\end{enumerate}
\end{proposition}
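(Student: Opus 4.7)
The heart of the proposition is the isomorphism \eqref{eq:RHSloc}; parts (a)--(c) then follow formally from it together with Theorem~\ref{TRHS}. To prove \eqref{eq:RHSloc}, write $F\otimes\C_{(X\setminus Y)\times S}\simeq j_!j^{-1}F$ where $j:(X\setminus Y)\times S\hookrightarrow\XS$ is the open inclusion, and unwind the definition \eqref{RHS1}. Using the tensor--hom adjunction on the subanalytic site to move the factor $\rho'_*\C_{(X\setminus Y)\times S}$ into the second argument, the proof reduces to the relative tempered localization identity
\[
\rho'^{-1}\rh_{\rho'_*\pOXS}\bigl(\rho'_*\C_{(X\setminus Y)\times S},\,\sho^{t,S}_{\XS}\bigr)\simeq \sho_{\XS}(\ast(\YS))\otimes^L_{\sho_{\XS}}\rho'^{-1}\sho^{t,S}_{\XS},
\]
which is the $S$-relative subanalytic analogue of Kashiwara's theorem $\tho(\C_{X\setminus Y},\sho_X)\simeq\sho_X(\ast Y)$ for a complex hypersurface. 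This should be accessible by adapting the classical argument to the framework of Section~\ref{subsec:relsubanalytic}, using the realification description $\sho^{t,S}_{\XS}\simeq\Rhom_{\rho'_!\shd_{\ov X\times\ov S}}(\rho'_!\sho_{\ov X\times\ov S},\Db^{t,S}_{\XS})$ and the flatness of $\sho_{\XS}(\ast(\YS))$ over $\sho_{\XS}$.

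Granting \eqref{eq:RHSloc}, part (a) is immediate: $F\otimes\C_{(X\setminus Y)\times S}$ belongs to $\rD^\rb_\cc(\pOXS)$ (since $Y$ is complex analytic), so Theorem~\ref{TRHS} yields $\RH^S_X(F\otimes\C_{(X\setminus Y)\times S})\in\rD^\rb_\rhol(\DXS)$, whence so does $\RH^S_X(F)(\ast(\YS))$ via \eqref{eq:RHSloc}. For (b), tensor the exact sequence $0\to\C_{(X\setminus Y)\times S}\to\C_{\XS}\to\C_{\YS}\to0$ with $F$ and apply the contravariant functor $\RH^S_X$ to obtain a distinguished triangle
\[
\RH^S_X(F\otimes\C_{\YS})\to\RH^S_X(F)\to\RH^S_X(F\otimes\C_{(X\setminus Y)\times S})\To{+1}.
\]
Rewriting the third term via \eqref{eq:RHSloc} and comparing with the canonical triangle
\[
R\Gamma_{[\YS]}\RH^S_X(F)\to\RH^S_X(F)\to\RH^S_X(F)(\ast(\YS))\To{+1}
\]
identifies $R\Gamma_{[\YS]}\RH^S_X(F)\simeq\RH^S_X(F\otimes\C_{\YS})$, regular holonomic by Theorem~\ref{TRHS} applied to $F\otimes\C_{\YS}\in\rD^\rb_\cc(\pOXS)$. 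For (c), if $\RH^S_X(F)\isom\RH^S_X(F)(\ast(\YS))$, then (b) forces $\RH^S_X(F\otimes\C_{\YS})=0$; applying $\pSol_X$ and using the isomorphism $\Id\simeq\pSol_X\circ\RH^S_X$ from Theorem~\ref{TRHS} yields $F\otimes\C_{\YS}=0$, so the natural morphism $F\otimes\C_{(X\setminus Y)\times S}\to F$ is an isomorphism.

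The main obstacle lies in the tempered localization identity underpinning \eqref{eq:RHSloc}. One must control the moderate growth of tempered holomorphic sections of $\sho^{t,S}_{\XS}$ along $\YS$ uniformly in the parameter $s\in S$; once reduced via realification to the tempered distribution sheaf $\Db^{t,S}_{\XS}$, the strategy parallels the absolute case, but the subanalytic formalism of \cite{TL} is essential to manipulate $\rho'_!,\rho'_*,\rho'^{-1}$ and the sheaves of tempered functions with parameters, and to justify the compatibility of $\rh$ with $\rho'_*$-images of constructible sheaves of the form $\C_{(X\setminus Y)\times S}$.
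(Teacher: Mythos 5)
Your derivations of parts~\eqref{RHSloc1}--\eqref{RHSloc3} from the isomorphism~\eqref{eq:RHSloc} are correct and match the paper's (the distinguished triangle comparison for~\eqref{RHSloc2} and the $\pSol_X$ argument for~\eqref{RHSloc3} are exactly what is done). The gap is in the proof of~\eqref{eq:RHSloc} itself: you correctly identify that it hinges on a relative tempered localization statement, but you then write ``this should be accessible by adapting the classical argument'' and stop --- which is precisely the core of the proof and cannot be left as an aside. Moreover, the specific formula you propose, with a derived tensor product against $\sho_{\XS}(\ast(\YS))$ and $\rho'_*\C_{(X\setminus Y)\times S}$ appearing as a $\rho'_*\pOXS$-module, is not quite the right object to isolate and is not easily verified directly on $\sho^{t,S}$.

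The paper's proof closes this gap by a concrete, two-step reduction that avoids establishing a new localization theorem for $\sho^{t,S}$. First, Proposition~\ref{P:AF} reduces the claim to the case $F=\pOXS\otimes\C_{\Omega\times S}$, for which $\TH^S_X(F)=\tho(\C_{\Omega\times S},\Db_{\XS})$ by~\eqref{ETMF2}; second, at the level of tempered distributions one observes that a local equation $f$ of $Y$ is already invertible on $\tho(\C_{(\Omega\setminus Y)\times S},\Db_{\XS})$, and \cite[Prop.\,3.23]{Ka3} then gives that the restriction
\[
\tho(\C_{\Omega\times S},\Db_{\XS})(\ast(\YS))\to\tho(\C_{(\Omega\setminus Y)\times S},\Db_{\XS})
\]
is an isomorphism. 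Applying the realification procedure~\eqref{RHS1} then carries this to $\RH^S_X$ without ever having to control tempered holomorphic growth along $\YS$ directly. Your sketch invokes both the $\shs$-resolution and realification implicitly, but you neither carry out the reduction to simple representatives nor pinpoint the ``invertibility of $f$'' observation that makes the cited result from \cite{Ka3} applicable, so as written the argument is incomplete at its central step.
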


\begin{proof}
Let $f=0$ be a local defining equation of $Y$. Since this is a local problem we may start by assuming that $F=\pOXS\otimes\C_{\Omega\times S}$ for a relatively compact open subanalytic subset $\Omega$ of~$X$. Noting that $f$ is invertible on $ \tho(\C_{(\Omega\setminus Y)\times S}, \Db_{\XS})$, according to \cite[Prop.\,3.23]{Ka3}, the natural restriction morphism
\[
\tho(\C_{\Omega\times S}, \Db_{\XS})(*(\YS))\to \tho(\C_{(\Omega\setminus Y)\times S}, \Db_{\XS})
\]
is an isomorphism. Thus, applying Proposition \ref{P:AF}, the natural $\shd_{\overline{X}\times\overline{S}}$-linear morphism $\TH^S_X(F)(*(\YS))\to \TH^S_X(F\otimes \C_{(X\setminus Y)\times S})$ is an isomorphism for any $F\in\rD^\rb_\rc(\pOXS)$. The existence of the morphism \eqref{eq:RHSloc} and the fact that it is an isomorphism then follow by \eqref{RHS1} and functoriality.

The remaining statements~\eqref{RHSloc1} and \eqref{RHSloc2} follow straightforwardly (see also \cite[Ex.\,3.20]{MFCS2}), while \eqref{RHSloc3} is obtained by applying $\pSol_X$ to the isomorphism~\eqref{eq:RHSloc}, and by using Theorem \ref{TRHS}.\end{proof}

\begin{corollary}\label{C iminv}
For any $F\!\in\!\rD^\rb_\cc(\pOXS)$ and for any closed submanifold~$Y$ of $X$,
$R\Gamma_{[\YS]}(\RH^S_X(F))$ is a complex with regular holonomic $\DXS$-coho\-mologies.
\end{corollary}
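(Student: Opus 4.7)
The plan is to reduce to the hypersurface case, already handled by Proposition~\ref{RHSloc}\eqref{RHSloc2}, by a local argument on $X$. Since the assertion is local, I work in a neighborhood of an arbitrary point of $Y$ and choose coordinates $(x_1,\dots,x_n)$ on $X$ adapted to $Y$, so that $Y$ is cut out by $x_1=\cdots=x_k=0$ with $k:=\codim_XY$. Setting $Y_i:=\{x_i=0\}$, I obtain smooth hypersurfaces $Y_1,\dots,Y_k$ meeting transversally, with $Y=Y_1\cap\cdots\cap Y_k$.

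For these smooth transverse hypersurfaces, a standard property of the algebraic local cohomology functor on $\DXS$-modules yields the composition identity
\[
R\Gamma_{[Y\times S]}\simeq R\Gamma_{[Y_1\times S]}\circ\cdots\circ R\Gamma_{[Y_k\times S]}
\]
as triangulated endofunctors of $\rD^\rb(\DXS)$. It can be verified either by iterated Mayer--Vietoris, or by unfolding both sides as the total fiber of a Koszul-type complex built from the localizations $\cbbullet(\ast(Y_{i_1}\cup\cdots\cup Y_{i_j})\times S)$ along unions of the chosen hyperplanes.

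Starting from $\RH^S_X(F)$, I then iterate Proposition~\ref{RHSloc}\eqref{RHSloc2}, one hypersurface at a time. The key point making the iteration possible is that at each step the input remains of the form $\RH^S_X(G)$ with $G\in\rD^\rb_\cc(\pOXS)$: if $G$ is $\CC$-constructible, so is $G\otimes\C_{Y_i\times S}$ (\cf Section~\ref{subsubsec:relativeCc}), since $Y_i$ is a complex hypersurface. After $k$ applications I obtain a functorial isomorphism
\[
R\Gamma_{[Y\times S]}(\RH^S_X(F))\simeq \RH^S_X(F\otimes\C_{Y\times S}),
\]
and the right-hand side belongs to $\rD^\rb_\rhol(\DXS)$ by Theorem~\ref{TRHS}, since $F\otimes\C_{Y\times S}\in\rD^\rb_\cc(\pOXS)$.

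The delicate point I anticipate is the composition identity in the second paragraph; it is standard for smooth transverse hypersurfaces but genuinely uses transversality, which is the reason for choosing coordinates adapted to $Y$ from the outset. Everything else is routine bookkeeping, combining the hypersurface case of Proposition~\ref{RHSloc}\eqref{RHSloc2} with the stability of $\CC$-constructibility under tensor product with constant sheaves supported on closed complex submanifolds.
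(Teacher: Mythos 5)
Your argument takes essentially the same route as the paper's: reduce locally to writing $Y$ as an intersection of smooth hypersurfaces and iterate Proposition~\ref{RHSloc}\eqref{RHSloc2}, using at each step that $\C$-constructibility is preserved by tensoring with $\C_{Y_i\times S}$, to arrive at $R\Gamma_{[\YS]}(\RH^S_X(F))\simeq\RH^S_X(F\otimes\C_{\YS})$ and conclude by Theorem~\ref{TRHS}. You merely spell out the composition identity for algebraic local cohomology that the paper leaves implicit; one small inaccuracy in your commentary is that this identity holds for arbitrary closed analytic subsets (not just transverse ones), so transversality is not what is ``genuinely used''---it is the smoothness of each $Y_i$, needed to invoke Proposition~\ref{RHSloc}\eqref{RHSloc2}, that forces the choice of adapted coordinates.
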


\begin{proof}
The statement being local, we may assume that $Y$ is an intersection of smooth hypersurfaces of $X$ and then conclude by Proposition \ref{RHSloc}\eqref{RHSloc2} that
$R\Gamma_{[\YS]}(\RH^S(F))\simeq \RH^S(F\otimes \C_{\YS})$ which concludes the proof.
\end{proof}

\begin{proposition}\label{P2new}
Let $f:Y\to X$ be a smooth morphism
of complex manifolds. Then there exists a natural isomorphism in $\rD^\rb(f^{-1}\DXS)$, functorial in $F\in\rD^\rb_{\rc}(\pOXS)$:
\[
\Rhom_{\DYS}(\shd_{(Y\to X)\times S/S}, \RH^S_Y(f^{-1}F))\simeq f^{-1}\RH^S_X(F)[d_Y-d_X].
\]
\end{proposition}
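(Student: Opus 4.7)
The plan is to use a Spencer-type resolution to identify $\Rhom_{\DYS}(\shd_{(Y\to X)\times S/S},\cbbullet)$ with the relative de Rham functor along the smooth submersion $f\times\id:\YS\to\XS$, and then to apply a tempered Poincaré lemma. Since $f$ is smooth, the sheaf $\Theta_{\YS/\XS}$ of vector fields on $\YS$ tangent to the fibers of $f\times\id$ is $\sho_{\YS}$-locally free of rank $d_Y-d_X$, and the relative Spencer complex $\DYS\otimes_{\sho_{\YS}}\bigwedge^{\sbullet}\Theta_{\YS/\XS}$ (placed in non-positive degrees) yields a resolution of $\shd_{(Y\to X)\times S/S}$ by locally free $(\DYS,f^{-1}\DXS)$-bimodules of length $d_Y-d_X$. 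Applying $\Rhom_{\DYS}(\cbbullet,\shn)$ gives, for any $\DYS$-module $\shn$, a natural isomorphism in $\rD^+(f^{-1}\DXS)$
\[
\Rhom_{\DYS}(\shd_{(Y\to X)\times S/S},\shn)\simeq\Omega^\sbullet_{\YS/\XS}\otimes_{\sho_{\YS}}\shn,
\]
the right-hand side being the relative de Rham complex along the fibers of $f$, placed in cohomological degrees $[0,d_Y-d_X]$.

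I would then apply this identification to $\shn=\RH^S_Y(f^{-1}F)=\rho'^{-1}\rh_{\rho'_*\pOYS}(\rho'_*f^{-1}F,\sho^{t,S}_{\YS})[d_Y]$. Since the components of the Spencer resolution are $\pOYS$-free and the de Rham differentials are $\pOYS$-linear, they commute with $\rh_{\rho'_*\pOYS}(\rho'_*f^{-1}F,\cbbullet)$, so that
\[
\Rhom_{\DYS}(\shd_{(Y\to X)\times S/S},\RH^S_Y(f^{-1}F))\simeq\rho'^{-1}\rh_{\rho'_*\pOYS}\bigl(\rho'_*f^{-1}F,\Omega^\sbullet_{\YS/\XS}\otimes_{\sho_{\YS}}\sho^{t,S}_{\YS}\bigr)[d_Y].
\]
The crux of the argument, and what I expect to be the main obstacle, is a tempered relative Poincaré lemma asserting the quasi-isomorphism
\[
\Omega^\sbullet_{\YS/\XS}\otimes_{\sho_{\YS}}\sho^{t,S}_{\YS}\simeq f^{-1}\sho^{t,S}_{\XS}
\]
(concentrated in degree $0$). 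Via the realification procedure of Section \ref{subsubsec:real}, this reduces to the analogous statement for $\Db^{t,S}_{\YS}$ along the fibers of $f\times\id$, which, by working locally and factoring $f$ as a projection $Y=Z\times X\to X$, follows from a tempered Poincaré lemma along such a projection, in the spirit of the classical Kashiwara-Schapira result for tempered distributions.

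Once the Poincaré lemma is granted, it remains to establish the commutation
\[
\rho'^{-1}\rh_{\rho'_*\pOYS}(\rho'_*f^{-1}F,f^{-1}\sho^{t,S}_{\XS})\simeq f^{-1}\rho'^{-1}\rh_{\rho'_*\pOXS}(\rho'_*F,\sho^{t,S}_{\XS}).
\]
By Proposition \ref{P:AF} and d\'evissage along distinguished triangles, it suffices to verify this when $F=\pOXS\otimes\C_{\Omega\times V}$ for relatively compact open subanalytic sets $\Omega\subset X$ and $V\subset S$; in this case both sides compute $R\Gamma_{f^{-1}\Omega\times V}f^{-1}\sho^{t,S}_{\XS}$, the commutation of $f^{-1}$ with local cohomology along an open subset being a consequence of the openness of $f$. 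Collecting shifts via $[d_Y]=[d_X]+[d_Y-d_X]$ and invoking the definition of $\RH^S_X$ yields the stated isomorphism, and functoriality in $F\in\rD^\rb_\rc(\pOXS)$ follows from the functoriality of each step.
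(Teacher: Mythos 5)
Your proposal follows essentially the same route as the one the paper takes (which is a terse appeal to ``mimicking the proof of [KS4, Th.\,5.8 (5.14)] using Proposition~\ref{Ldir}''): identify $\Rhom_{\DYS}(\shd_{(Y\to X)\times S/S},\cbbullet)$ with the relative de Rham complex along $f$ via the Spencer resolution, and reduce to a tempered Poincar\'e lemma along the fibres of $f$ after restricting to generators via Proposition~\ref{P:AF}. The one organizational difference is how the natural morphism is obtained. The paper, like KS4, first builds a morphism of functors by adjunction from the direct-image morphism $\Df_!\TH^S_Y(f^{-1}\cbbullet)\to\TH^S_X(\cbbullet)$ of Proposition~\ref{Ldir}, and only then checks isomorphism on the generators of $\shs$; you instead manufacture the natural isomorphism as a chain whose middle link is the Poincar\'e quasi-isomorphism itself. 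This is a legitimate repackaging, though it does mean that naturality in $F$ is slightly harder to see than with the adjunction construction, and that the Poincar\'e lemma is load-bearing both for the existence of the morphism and for its invertibility.

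Two places where your write-up is more optimistic than it should be. First, the quasi-isomorphism $\Omega^\sbullet_{\YS/\XS}\otimes_{\sho_{\YS}}\sho^{t,S}_{\YS}\simeq f^{-1}\sho^{t,S}_{\XS}$ is stated at the level of the single relative subanalytic sheaf $\sho^{t,S}$; for this to even be well formed one must be careful about what pullback functor $f^{-1}$ denotes here (subanalytic pullback on $Y_\sa\times S_\sa$), and the relative parameter $S$ (encoded via projective limits over $V\Subset S$) means the Poincar\'e lemma is not literally the one in KS4 but its $S$-relative variant; its justification is precisely what lives behind the paper's citation. Second, the claim that for $F=\pOXS\otimes\C_{\Omega\times V}$ ``both sides compute $R\Gamma_{f^{-1}\Omega\times V}f^{-1}\sho^{t,S}_{\XS}$'' conflates the topological $f^{-1}$ of the sheaf $\rho'^{-1}\sho^{t,S}_{\XS}$ on $X\times S$ with the subanalytic pullback on $Y_\sa\times S_\sa$; one needs the compatibility $\rho'^{-1}_{S,Y}\circ f^{-1}_{\sa}\simeq f^{-1}\circ\rho'^{-1}_{S,X}$ together with the adjunction-commutation of $\rh$ with $f^{-1}$, which takes a bit more than ``openness of $f$''. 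These are not wrong ideas, but they are exactly the technical core that the paper outsources to KS4 and Proposition~\ref{Ldir}, so the proposal should not treat them as routine.
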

The proof is performed by mimicking the proof of \cite[Th.\,5.8 (5.14)]{KS4} using Proposition \ref{Ldir}.

The following result is the relative version of \cite[Prop.\,5.9 (5.20)]{KS4} from which we adapt the proof.

\begin{proposition}\label{KSmorph}
For any $F\!\in\!\rD^\rb_\rc(\pOXS)$ and for any morphism \hbox{$f\!:\!Y\!\to\!X$} of complex manifolds, there exists a natural morphism in
$\rD^\rb(\DYS)$:
\begin{equation}\label{E:mor}
\Df^*\RH^S_X(F)[d_Y-d_X]\to \RH^S_Y(f^{-1}F).
\end{equation}
Moreover, when $F\in \rD^\rb_\cc(\pOXS)$, this morphism is an isomorphism.
\end{proposition}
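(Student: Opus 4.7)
The plan is to adapt the proof of \cite[Prop.~5.9~(5.20)]{KS4} to the relative setting, exploiting Proposition~\ref{P2new} for smooth morphisms and Proposition~\ref{RHSloc} for closed embeddings. First, I factor $f$ as $f = p\circ \gamma_f$, where $\gamma_f : Y\hto Y\times X$ is the graph embedding (a closed embedding) and $p : Y\times X \to X$ is the second projection (a smooth morphism). By functoriality of $\Df^*$ and of $f^{-1}$ under composition, it is enough to construct \eqref{E:mor} separately in these two cases.

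In the smooth case, Proposition~\ref{P2new} supplies the functorial isomorphism
\[
f^{-1}\RH^S_X(F)[d_Y-d_X] \isom \Rhom_{\DYS}\bigl(\shd_{(Y\to X)\times S/S},\,\RH^S_Y(f^{-1}F)\bigr).
\]
Applying $\shd_{(Y\to X)\times S/S}\otimes^{\mathrm L}_{f^{-1}\DXS}(\cbbullet)$ and composing on the right with the canonical evaluation morphism produces \eqref{E:mor}; equivalently, this arrow corresponds to the identity under the tensor–hom adjunction. For a closed embedding $i:Y\hto X$, the assertion being local in $X$, I may assume that $Y$ is a transverse intersection of smooth hypersurfaces $Y_1,\dots,Y_c$ and resolve $\shd_{(Y\to X)\times S/S}$ as an $i^{-1}\DXS$-module by a Koszul complex built from local equations of the $Y_k$. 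Combining this with the localization isomorphism of Proposition~\ref{RHSloc}\eqref{RHSloc2}, which links $R\Gamma_{[\YS]}\RH^S_X(F)$ to $\RH^S_X(F\otimes\C_{\YS})$, and invoking the functorial comparison morphism of Proposition~\ref{Ldir} on the $Y$ side, one obtains \eqref{E:mor} in this case.

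For the isomorphism statement when $F\in\rD^\rb_\cc(\pOXS)$, both sides of \eqref{E:mor} lie in $\rD^\rb_{\hol}(\DYS)$: the right-hand side by Theorem~\ref{TRHS}, the left-hand side by the smooth case of Theorem~\ref{inverseimage} (Section~\ref{subsubsec:smoothpullbackreghol}) combined with the local Koszul computation above for the closed-embedding factor. By the conservativity of the family $(Li^*_s)_{s\in S}$ on $\rD^\rb_{\hol}(\DYS)$ (Section~\ref{subsec:prelimD}), it is then enough to verify the isomorphism after applying each $Li^*_s$. Using the base-change identities $Li^*_s\circ\Df^*\simeq\Df^*\circ Li^*_s$ and $Li^*_s\circ f^{-1}\simeq f^{-1}\circ Li^*_s$ together with Proposition~\ref{com}, the morphism $Li^*_s$ applied to \eqref{E:mor} identifies (up to shift) with
\[
\Df^*\tho(Li^*_s F,\sho_X)[d_Y-d_X] \to \tho(f^{-1}Li^*_s F,\sho_Y),
\]
for the $\C$-constructible complex $Li^*_s F\in \rD^\rb_\cc(\C_X)$. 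This is precisely \cite[Prop.~5.9~(5.20)]{KS4}, which concludes the argument.

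The hardest step will be the construction of \eqref{E:mor} for a closed embedding: the smooth case is essentially forced by Proposition~\ref{P2new} together with tensor–hom adjunction, while in the case of a closed embedding one has to reconcile the algebraic Koszul description of $\Di^*$ with the subanalytic realification underlying the definition of $\RH^S$, demanding a careful combined use of Propositions~\ref{RHSloc} and \ref{Ldir}. Once \eqref{E:mor} is in hand, the isomorphism property follows cleanly by reduction to the absolute case through the conservative family $Li^*_s$.
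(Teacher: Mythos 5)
Your treatment of the smooth (projection) factor matches the paper: Proposition~\ref{P2new} together with the tensor--hom adjunction and the evaluation map yields the morphism~\eqref{E:mor}, and this is exactly how the paper handles it. The divergence, and the gap, lies in the closed-embedding factor.

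For a closed embedding $i:Y\hto X$, the paper does not merely construct a morphism to be later shown to be an isomorphism; it constructs an \emph{isomorphism} outright, and does so for every $F\in\rD^\rb_\rc(\pOXS)$. The key step is the vanishing $\Di^*\RH^S_X(F\otimes\C_{(X\setminus Y)\times S})=0$, reduced by induction on codimension to the case of a smooth hypersurface and then deduced from Proposition~\ref{RHSloc}. Combined with Proposition~\ref{RHSloc}\eqref{RHSloc2} and Kashiwara's equivalence ($\Di^*\Di_*[d_Y-d_X]\simeq\Id$), this gives directly
$\Di^*\RH^S_X(F)[d_Y-d_X]\simeq \RH^S_Y(i^{-1}F)$.
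Your alternative --- resolve $\shd_{(Y\to X)\times S/S}$ by a Koszul complex and invoke Proposition~\ref{Ldir} --- is not convincing as stated: Proposition~\ref{Ldir} produces a pushforward-type comparison $\Df_!\TH^S_Y(f^{-1}\cbbullet)\to\TH^S_X(\cbbullet)$, and it is not explained how to extract from it (together with the Koszul resolution) the desired pullback morphism with the correct shift and variance. Even if this could be repaired, you would still only have a morphism, not an isomorphism, for the closed-embedding factor, which creates the second gap below.

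The second gap is in your reduction to the absolute case via the conservative family $(Li^*_s)_{s\in S}$. That argument requires both sides of~\eqref{E:mor} to lie in $\rD^\rb_\hol(\DYS)$. For the smooth factor this is fine (Section~\ref{subsubsec:smoothpullbackreghol}), but for the closed-embedding factor you appeal to ``the local Koszul computation'' to get holonomicity of the left-hand side. This is precisely the kind of statement the paper is at pains to avoid: as recalled in the introduction, relative holonomicity is \emph{not} preserved by pullback along a closed embedding in general (\cite[Ex.\,2.4]{MFCS3}), because the graph embedding need not be non-characteristic for $\Dp^*\RH^S_X(F)$. In fact, in this paper the closed-embedding case of Theorem~\ref{inverseimage} is proved \emph{using} Proposition~\ref{KSmorph}, so relying on it here would be circular. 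The paper escapes this by proving the closed-embedding comparison as an isomorphism from the outset, so that holonomicity of the left-hand side is inherited from the right-hand side; the Nakayama/conservativity argument is then needed only for the smooth factor, exactly as you carry it out. To fix your proof, replace the Koszul/\ref{Ldir} construction for closed embeddings by the paper's vanishing argument, and drop the separate holonomicity claim for that factor.
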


\begin{proof}
We start by decomposing $f$ as the graph embedding $Y\to Y\times X$ followed by the projection $Y\times X\to X$, reducing to the case of (i) a closed immersion and (ii) a projection morphism.

Let us treat (i). We shall prove that
\[
\Df^*\RH^S_X(F)[d_Y-d_X]\simeq\RH^S_Y(f^{-1}F)
\]
by a natural isomorphism in $\rD^\rb(\DYS)$ functorially in $F$. We start by noticing that
\begin{equation}\label{eq:vanish}
\Df^*\RH^S_X(F\otimes\C_{(X\setminus Y)\times S})=0.
\end{equation}

To check this local statement we may assume, by induction on $\codim Y$, that~$Y$ is smooth of codimension one, and~\eqref{eq:vanish} follows from Proposition~\ref{RHSloc}. Hence we conclude that
\begin{align*}
\Df^*\RH^S_X(F)[d_Y&-d_X]\simeq\Df^*\RH^S_X(F\otimes \C_{\YS})[d_Y-d_X]\\
&\simeq \Df^*\Df_*\RH^S_Y(f^{-1}F)[d_Y-d_X]\quad\text{by Proposition \ref{RHSloc}\eqref{RHSloc2}}\\
&\simeq \RH^S_Y(f^{-1}F)\ \text{ since $\Df^*\Df_*[d_Y-d_X]\simeq\Id_{\rD^\rb(\DYS)}$}.
\end{align*}

Let us now treat (ii). Let $f:Y\to X$ a projection of $Y=X\times Z$ on~$X$. Recall that in that case we have a natural transformation of functors on $\rD^\rb(\DYS)$
\[
\shd_{(Y\to X)\times S/S} \otimes_{f^{-1}\DXS}\Rhom_{\DYS}(\shd_{(Y\to X)\times S/S},\cdot)\to\Id.
\]
Then by Proposition~\ref{P2new} we obtain the canonical morphism \eqref{E:mor}.

Let us now assume that $F\in \rD^\rb_\cc(\pOXS)$. Since the result is true when~$f$ is a closed embedding, it remains to consider the case where $f$ is a smooth morphism. Then $f$ is a non-characteristic morphism for any $\shm\in\rD^\rb_{\coh}(\shd_X)$ in the sense of \cite[Def.\,11.2.11]{KS1}, hence, according to Theorem 11.3.5 of \loccit, we have a functorial isomorphism $f^{-1}\Sol(\shm)\simeq \Sol(\Df^*\shm)$. If~$\shm\in\rD^\rb_{\rhol}(\shd_X)$ and $F=\Sol(\shm)$, according to the Riemann-Hilbert correspondence in the absolute case (here denoted by $\RH$), we have $\shm\simeq \RH(F)$ and we conclude an isomorphism $\Df^*\shm\simeq \RH(f^{-1}F)$. As a consequence,
\begin{align*}
Li_s^*\Df^*\RH^S_X(F)&\simeq \Df^*Li_s^*\RH^S_X(F) \simeq
\Df^*\tho (Li_s^*F, \sho_X)[d_X] \\
&\overset{(*)}\simeq \tho(f^{-1}Li_s^*F,\sho_Y)[d_X]\simeq Li_s^*\RH^S_Y(f^{-1}F)[d_X-d_Y],
\end{align*}
where $(*)$ holds by the absolute case recalled above and the compatibility of our constructions with the similar ones in the absolute case. By applying the variant of Nakayama's Lemma (see \ref{subsec:prelimD}) to the morphism~\eqref{E:mor} in $\rD^\rb_{\rhol}(\DYS)$, we obtain that \eqref{E:mor} is an isomorphism for any smooth morphism, and thus for any morphism.
\end{proof}

\subsection{Behaviour of \texorpdfstring{$\RH^S$}{RHS} under finite ramification over \texorpdfstring{$S$}{S}}\label{s5}

Let $s_0\in S$, let $N$ be a natural number and let $\delta: (S',s'_0)\to (S,s_0)$ be the ramification of center $s_0$ of
degree $N$ (that is, there exist a local chart on $S$ centered in $s_0$ and a local chart centered in $s'_0$ such that $\delta(s')=s'^N$). For simplicity we shall keep the notation $\delta$ also to denote the morphism $\Id_X\times \delta: X\times S'\to X\times S$.

For a $\DXS$-module $\shm$, \resp an object $F\in \rD^\rb_{\rc}(\pOXS)$, the pullback is defined by
\[
\Ddelta^*\shm:=\sho_{\XS'}\otimes_{\delta^{-1}\sho_{\XS}}\delta^{-1}\shm\quad\resp \delta^*F:=p^{-1}\sho_{S'}\otimes_{p^{-1}\delta^{-1}\sho_S} \delta^{-1}F.
\]

We remark that if $\Ddelta_*$ denotes the direct image in the sense of $\DXS$-modules, then $\Ddelta_*=\delta_*$, so we simply denote $\Ddelta^*,\Ddelta_*$ by $\delta^*,\delta_*$. We also remark that $\sho_{\XS'}$ is flat over $\delta^{-1}\sho_{\XS}$ hence we have
\[
\delta_*\delta^*\shm\simeq\delta_*\sho_{\XS'}\otimes_{\sho_{\XS}}\shm\quad\resp \delta_*\delta^*F:=\delta_*\sho_{S'}\otimes_{\sho_S}F,
\]
so that $\shm$, \resp $F$, is a direct summand in $\delta_*\delta^*\shm$, \resp in $\delta_*\delta^*F$.

The first pullback induces a well-defined exact functor from $\rD^\rb_{\coh}(\DXS)$ to $\rD^\rb_{\coh}(\shd_{X\times S'/S'})$, as already used in \cite{MFCS2} in a particular situation (proof of Corollary 2.8, where $\delta$ is denoted by $\rho$), and the second one a well-defined functor
$\rD^\rb_{\rc}(\pOXS)\to \rD^\rb_{\rc}(p^{-1}\sho_{S'})$. The following results benefited from useful discussions with Luca Prelli.

\refstepcounter{theorem}
\begin{lemma}\label{deltat}
There is a well-defined morphism in $\rD^\rb(\rho'_{S *}\delta^{-1}\DXS)$
\begin{equation}\label{E6}
\delta^{-1}\sho_{\XS}^{t,S}\to \sho_{\XS'}^{t,S'}.
\end{equation}
\end{lemma}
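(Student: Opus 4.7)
The plan is to construct the morphism at the level of (subanalytic) sheaves, using that $\delta:X\times S'\to\XS$ is finite and proper and, in local coordinates, given by the polynomial substitution $s=s'^N$ in the $S$-direction and the identity in the $X$-direction, so that composition with $\delta$ sends tempered holomorphic functions to tempered holomorphic functions. First I would exhibit a morphism of subanalytic sheaves on $\XS$
\[
\sho^{t,S}_{\XS}\to\delta_*\sho^{t,S'}_{X\times S'}
\]
via the recipe $f\mapsto f\circ\delta$. For a relatively compact open subanalytic $\Omega\subset\XS$ and $f\in\Gamma(\Omega,\sho^{t,S}_{\XS})$, the composition $f\circ\delta$ is holomorphic on $\delta^{-1}(\Omega)$ (and relatively holomorphic with respect to~$S'$), and temperedness is preserved because $\delta$ is polynomial: a bound $|f(x,s)|\leq C\cdot\mathrm{dist}((x,s),\partial\Omega)^{-m}$ transforms under $s=s'^N$ into an estimate of the same form for $f\circ\delta$, up to a multiplicative constant and a change of exponent. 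Compatibility with the inverse-limit presentation
\[
\Gamma(U\times V,\sho^{t,S}_{\XS})\simeq\varprojlim_{W\Subset V}\Gamma(U\times W,\sho^{t}_{\XS})
\]
is immediate because $\delta$ is proper, so $\delta^{-1}(W)\Subset\delta^{-1}(V)$ whenever $W\Subset V$.

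Next I would use that $\delta$ is proper to apply the adjunction $(\delta^{-1},\delta_*)$ and convert the morphism above into the desired $\delta^{-1}\sho^{t,S}_{\XS}\to\sho^{t,S'}_{X\times S'}$. For compatibility with the $\delta^{-1}\DXS$-action, one notes that $\DXS$ consists of differential operators relative to~$S$ and that $\delta=\Id_X\times\delta_{S'\to S}$; a local section $P\in\delta^{-1}\DXS$ therefore involves only $X$-derivations, and the chain rule trivially yields $P(f\circ\delta)=(Pf)\circ\delta$. Since the whole construction takes place at the level of honest sheaves, no derived subtleties arise, and the morphism lifts at once to $\rD^\rb(\rho'_{S*}\delta^{-1}\DXS)$.

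The main obstacle is the careful preservation of temperedness by the substitution $s=s'^N$, taking into account the inverse limit $\varprojlim_{W\Subset V}$ defining the relative version. The polynomial character of $\delta$ makes this essentially routine, but one must bookkeep growth exponents uniformly in compact families. If one prefers to first establish the analogous morphism $\delta^{-1}\Db^{t,S}_{\XS}\to\Db^{t,S'}_{X\times S'}$ for tempered distributions (defined by duality with a proper pushforward of test functions along $\delta$) and then deduce the statement for $\sho^{t,S}$ via the realification formula
\[
\sho^{t,S}_{\XS}=\rh_{\rho'_!\shd_{\overline{X}\times\overline{S}}}(\rho'_!\sho_{\overline{X}\times\overline{S}},\Db^{t,S}_{\XS}),
\]
the same temperedness issue resurfaces and is handled by the same polynomial-substitution argument, with the additional verification that the construction intertwines the $\sho_{\overline{X}\times\overline{S}}$-action on the source with the $\sho_{\overline{X}\times\overline{S'}}$-action on the target via the conjugate of $\delta$.
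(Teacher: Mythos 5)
There is a genuine gap. You treat $\sho^{t,S}_{\XS}$ as if it were a sheaf of functions on which the recipe $f\mapsto f\circ\delta$ makes direct sense, but by construction $\sho^{t,S}_{\XS}$ is an object of the \emph{derived} category (it is $\Rhom_{\rho'_!\shd_{\overline{X}\times\overline{S}}}(\rho'_!\sho_{\overline{X}\times\overline{S}},\Db^{t,S}_{\XS})$, a complex which is not concentrated in degree~$0$ once $\dim X>0$), so prescribing the morphism on ``sections'' does not produce a morphism in $\rD^\rb(\rho'_{S*}\delta^{-1}\DXS)$. The paper's key step, which you skip, is to reduce first to the sheaf $C^{\infty,t,S}_{\XS}$ of tempered $C^\infty$ functions: this \emph{is} an honest sheaf, composition with $\delta$ is unambiguously defined there, the polynomial nature of $\delta$ preserves the growth condition, and $\delta^{-1}\DXS$-linearity holds because $\DXS$ involves only $X$-derivations. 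One then passes to $\sho^{t,S}$ through the relative Dolbeault resolution, which is how the morphism is actually promoted to the derived category.

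Your proposed alternative via $\Db^{t,S}$ does not repair the gap. Defining $\delta^{-1}\Db^{t,S}_{\XS}\to\Db^{t,S'}_{\XS'}$ by duality would require a pushforward of test functions under $\delta$, but $\delta$ is a \emph{ramified} finite map ($s=s'^N$): the fiber-sum of a smooth compactly supported function fails to be smooth at the branch point, so the adjoint does not land in test functions, and more generally pullback of distributions along a non-submersive map is not given by composition and is not canonically defined across the ramification locus. This is precisely what makes $C^{\infty,t,S}$ (rather than $\Db^{t,S}$ or $\sho^{t,S}$ directly) the right intermediate object; its absence is the essential missing idea in your argument.
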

\begin{proof}
It is sufficient to prove the existence of such a morphism when replacing $\sho^{t,S}_{\XS}$ with $C^{\infty,t,S}_{\XS}$. In that case it is obtained by the composition of functions with $\delta$ --- this does not interfere with the growth conditions --- yielding a $\delta^{-1}\DXS$-linear morphism since the operators in $\DXS$ do not involve derivations along $S$.
\end{proof}

\begin{proposition}\label{ram}
With the notation as above, for any $F\in\rD^\rb_{\rc}(\pOXS)$, there exists a morphism, functorial in $F$,
in $\rD^\rb(\shd_{X\times S'/S'})$
\[
\Psi_F: \delta^*\RH^S_X(F)\to \RH^{S'}_X(\delta^*F),
\]
which is an isomorphism if $F$ is an object of $\rD^\rb_{\cc}(\pOXS)$.
\end{proposition}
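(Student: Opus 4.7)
My plan is to first produce a morphism $\delta^{-1}\RH^S_X(F)\to\RH^{S'}_X(\delta^*F)$ in the derived category of $\delta^{-1}\DXS$-modules, and then extend it $\sho_{\XS'}$-linearly to obtain $\Psi_F$ using the identification $\DXSp\simeq\sho_{\XS'}\otimes_{\delta^{-1}\sho_{\XS}}\delta^{-1}\DXS$ (valid because $\DXS$ contains only $X$-derivations, which are insensitive to $\delta$). Starting from the definition~\eqref{RHS1}, applying $\delta^{-1}$ and using the compatibility between $\delta$ and the subanalytification functors $\rho'_S,\rho'_{S'}$ yields
\[
\delta^{-1}\RH^S_X(F)\simeq \rho'^{-1}_{S'}\rh_{\rho'_{S'*}\delta^{-1}p^{-1}\sho_S}(\rho'_{S'*}\delta^{-1}F,\delta^{-1}\sho^{t,S}_{\XS})[d_X].
\]
Composition with the morphism of Lemma~\ref{deltat}, followed by the tensor-hom adjunction along the flat extension $\delta^{-1}p^{-1}\sho_S\to p^{-1}\sho_{S'}$ (flat because $\sho_{S'}$ is $\delta^{-1}\sho_S$-locally free of rank $N$, finite ramification being flat), identifies the resulting object with $\RH^{S'}_X(\delta^*F)$. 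Tensoring with $\sho_{\XS'}$ over $\delta^{-1}\sho_{\XS}$ then delivers $\Psi_F$; its $\DXSp$-linearity follows from the $\delta^{-1}\DXS$-linearity of the map in Lemma~\ref{deltat}.

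\textbf{Isomorphism for $F\in\rD^\rb_\cc(\pOXS)$.} Both sides of $\Psi_F$ lie in $\rD^\rb_\rhol(\DXSp)$: the target by Theorem~\ref{TRHS} applied to $\delta^*F\in\rD^\rb_\cc(p^{-1}\sho_{S'})$; the source because $\delta^*$ on $\DXS$-modules is exact (by the flatness above), preserves coherence, preserves holonomicity (the characteristic variety transforms as $\Lambda\times S\mapsto\Lambda\times S'$), and, via the identification $Li^*_{s'}\delta^*\shm\simeq Li^*_{\delta(s')}\shm$ arising from $\delta\circ i_{s'}=i_{\delta(s')}$, transports regular holonomicity. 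By the variant of Nakayama's lemma recalled in Section~\ref{subsec:prelimD} (conservativity of $(Li^*_{s'})_{s'\in S'}$ on $\rD^\rb_\rhol(\DXSp)$), it suffices to show that $Li^*_{s'}\Psi_F$ is an isomorphism for every $s'\in S'$. Applying Proposition~\ref{com} on $\XS$ at the point $\delta(s')$ and on $\XS'$ at the point $s'$, both source and target of $Li^*_{s'}\Psi_F$ are canonically identified with $\tho(Li^*_{\delta(s')}F,\sho_X)[d_X]$.

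\textbf{Expected main obstacle.} The delicate point is not the existence of an isomorphism between source and target of $Li^*_{s'}\Psi_F$, but the assertion that $Li^*_{s'}\Psi_F$ itself coincides with this canonical identification. This reduces to the claim that the morphism of Lemma~\ref{deltat}, restricted to the fibers $X\times\{s'\}$ over $X\times\{\delta(s')\}$, induces the identity on the tempered holomorphic structure sheaf of $X$. Since the construction in Lemma~\ref{deltat} is simply precomposition with $\delta$, and since at the level of the single fiber $\delta$ matches $\{s'\}$ with $\{\delta(s')\}$, this compatibility is natural; however, a careful verification requires unwinding the subanalytic bookkeeping underlying $\rho'_S$, $\rho'_{S'}$, and the realification procedure of~\eqref{RHS1}, and this is where I expect the bulk of the technical work to lie.
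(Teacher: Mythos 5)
Your proposal follows essentially the same route as the paper: build $\Psi_F$ by a chain of natural maps — commutation of $\delta^{-1}$ with the subanalytification functors, the pullback morphism for $\Rhom$, the key comparison $\delta^{-1}\sho^{t,S}_{\XS}\to\sho^{t,S'}_{\XS'}$ of Lemma~\ref{deltat}, extension of scalars $\delta^{-1}\sho_S\to\sho_{S'}$ on the base ring of the $\Rhom$, and finally $\sho_{\XS'}\otimes_{\delta^{-1}\sho_{\XS}}(\cdot)$ — and then check the isomorphism on $\C$-constructible inputs by the conservativity of $(Li^*_{s'})_{s'\in S'}$ combined with Proposition~\ref{com} and the compatibility $Li^*_{s'}\delta^*=Li^*_{\delta(s')}$. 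One small imprecision: in the first display you claim that $\delta^{-1}\RH^S_X(F)$ is \emph{isomorphic} to $\rho'^{-1}_{S'}\rh_{\rho'_{S'*}\delta^{-1}p^{-1}\sho_S}(\rho'_{S'*}\delta^{-1}F,\delta^{-1}\sho^{t,S}_{\XS})[d_X]$; only the site-theoretic commutation $\delta^{-1}\rho'^{-1}_S\simeq\rho'^{-1}_{S'}\delta^{-1}$ is an isomorphism, whereas the step $\delta^{-1}\Rhom\to\Rhom\,\delta^{-1}$ is in general only a natural morphism (the paper treats it as such, in \eqref{E5}). This does not affect the argument since only a morphism in that direction is needed; the isomorphism statement is anyway proved separately by the Nakayama reduction. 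Your closing caveat about whether $Li^*_{s'}\Psi_F$ genuinely realizes the canonical identification $\tho(Li^*_{\delta(s')}F,\sho_X)[d_X]$ is a legitimate worry; the paper addresses it implicitly via the functoriality built into Proposition~\ref{com} and the reduction to the absolute case, without dwelling on the bookkeeping you correctly flag.
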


\begin{proof}

Following the definition of the relative Riemann-Hilbert functor, we have to construct a natural
morphism:
\begin{multline}\label{E3}
\sho_{\XS'}\otimes _{\delta^{-1}\sho_{\XS}}\delta^{-1}{{\rho'}_{S}^{ -1}}\Rhom_{\rho'_{S *}\pOXS}(\rho'_{{S} *}F,\sho^{t,S}_{\XS})\\
\to\rho^{\prime-1}_{S'}\Rhom_{\rho'_{S' *}p^{-1}\sho_{S'}}(\rho'_{ S' *}\delta^*F, \sho^{t,S'}_{\XS'}).
\end{multline}

We have a natural isomorphism of functors on sites $\delta^{-1}\rho^{\prime-1}_S\simeq \rho^{\prime-1}_{S'}\delta^{-1}$ which yields a natural isomorphism in $\rD^\rb(\delta^{-1}\DXS)$
\begin{multline}\label{E4}
\delta^{-1}{\rho'^{-1}_{S}}\Rhom_{\rho'_{S *}\pOXS}(\rho'_{S *}F,\sho^{t,S}_{\XS})\\
\simeq {\rho^{\prime-1}_{S'}}\delta^{-1}\Rhom_{\rho'_{S *}\pOXS}(\rho'_{S *}F,\sho^{t,S}_{\XS}).
\end{multline}

Recall that, for a morphism $g: Z'\to Z$ of manifolds and $\sha$ a sheaf of rings on $Z$, we have a natural morphism of bifunctors on $\rD^\rb(\sha)$ (\cf \cite[(2.6.27]{KS1}):
\begin{equation}\label{eq:fA}
f^{-1}\Rhom_{\sha}(\cbbullet, \cbbullet)\to \Rhom_{f^{-1}\sha}(f^{-1}(\cbbullet), f^{-1}(\cbbullet)).
\end{equation}

Since we are working with sheaves on Grothendieck topologies (see \cite{EP} and~\cite{KS5}), we have the analogous of \eqref{eq:fA}, that is, in the present situation, we have a natural morphism
in $\rD^\rb(\rho'_{S' !}\delta^{-1}\DXS)$
\begin{equation*}
\delta^{-1}\!\!\Rhom_{\rho'_{ S *}\pOXS}(\rho'_{S *}F,\sho^{t,S}_{\XS})
\to \Rhom_{\delta^{-1}\rho'_{S *}\pOXS}(\delta^{-1}\rho'_{S *}F,\delta^{-1}\sho^{t,S}_{\XS}),
\end{equation*}
hence a natural morphism
in $\rD^\rb(\delta^{-1}\DXS)$
\begin{multline}\label{E5}
{\rho^{\prime-1}_{S'}}\delta^{-1}\Rhom_{\rho'_{S *}\pOXS}(\rho'_{S *}F,\sho^{t,S}_{\XS})\\
\to {\rho^{\prime-1}_{S'}}\Rhom_{\delta^{-1}\rho'_{S *}\pOXS}(\delta^{-1}\rho'_{S *}F,\delta^{-1}\sho^{t,S}_{\XS}).
\end{multline}

According to the morphism \eqref{E6} in
$\rD^\rb(\delta^{-1}\DXS)$, combining with \eqref{E5} and the commutation $\delta^{-1}\rho'_{S *}\simeq \rho'_{S' *}\delta^{-1}$
we derive a functorial chain of morphisms in $\rD^\rb(\delta^{-1}\DXS)$
\begin{multline*}
\rho^{\prime-1}_{S'}\delta^{-1}\Rhom_{\rho'_{S *}\pOXS}(\rho'_{S *}F,\sho^{t,S}_{\XS})\\
\to \rho^{\prime-1}_{S'}\Rhom_{\delta^{-1}\rho'_{ S *}\pOXS}(\delta^{-1}\rho'_{S *}F,\sho^{t,S'}_{\XS'})\\
\hspace*{2.3cm}\to \rho^{\prime-1}_{S'}\Rhom_{\rho'_{S *}\delta^{-1}\pOXS}(\rho'_{S *}\delta^{-1}F,\sho^{t,S'}_{\XS'})\\
\to \rho^{\prime-1}_{S'}\Rhom_{\rho'_{S' *}p^{-1}\delta^*\sho_S}(\rho'_{S' *}\delta^*F,\sho^{t,S'}_{\XS'}),
\end{multline*}
where the last term results by applying $\sho_{S'}\otimes_{\delta^{-1}\sho_S}(\cdot)$. We also
remark that the last term (which we will name $\shl$ for simplicity) is the right term of the desired morphism \eqref{E3} and is already an object of $\rD^{\rb}(\shd_{\XS'})$.
Hence, by applying to \eqref{E4} the functor $\sho_{\XS'}\otimes_{\delta^{-1}\sho_{\XS}}(\cdot)$ we derive a chain of natural morphisms
in $\rD^\rb(\shd_{X\times S'/S'})$.
\begin{multline*}
\sho_{\XS'}\otimes _{\delta^{-1}\sho_{\XS}}\delta^{-1}\rho^{\prime-1}_{S'}\Rhom_{\rho'_{S *}\pOXS}(\rho'_{S *}F,\sho^{t,S}_{\XS})\\
\to \sho_{\XS'}\otimes _{\delta^{-1}\sho_{\XS}}\shl\to\shl
\end{multline*}
whose composition gives the desired morphism $\Psi_F$.

Assume now that $F\in \rD^\rb_{\cc}(\pOXS)$ and let us prove that $\Psi_F$ is an isomorphism.

Since in that case $\RH^S_X(F)$ is an object of $\rD^\rb_{\rhol}(\DXS)$, it is clear that $\delta^*\RH^S_X(F)$ is an object of $\rD^\rb_{\rhol}(\DXS')$.
The same holds true with $\RH^{S'}_X(\delta^*F)$ since $\delta^*F$ is an object of $\rD^\rb_{\cc}(p^{-1}_X\sho_{S'})$.

It is then sufficient to apply $Li^*_{s'}$ to both sides of the morphism $\Psi_F$ and apply Proposition \ref{com}, noting that $Li^*_{s'}\delta^*=Li^*_s$, where $s\!=\!\delta(s')$. This way, both members become, by reduction to the absolute case, isomorphic to $\tho(Li^*_sF, \sho_X)[d_X]$.
\end{proof}

We shall now come back to the situation described at the beginning of this section, and we keep the same notations.

\begin{proposition}\label{ram4}
Let $\shm, \shn$ be $\DXS$-modules and assume that $\shm$ is coherent. If the complex $\Rhom_{\shd_{X\times S'/S'}}(\delta^*\shm, \delta^*\shn)$ belongs to $\rD^\rb_{\cc}(p^{-1}_X\sho_{S'})$, then the complex $\Rhom_{\DXS}(\shm, \shn)$ also belongs to $\rD^\rb_{\cc}(\pOXS)$.\end{proposition}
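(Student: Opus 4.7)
The plan is to realize $\shn$ as a direct summand of $\delta_*\delta^*\shn$ and then transfer the hypothesis via an $\sho$-module/$\shd$-module adjunction. First I would observe that $\delta\colon S'\to S$ is Galois of degree $N$ with group $\mu_N$ acting by $s'\mapsto\zeta s'$, so the normalised trace $\frac{1}{N}\sum_{\zeta\in\mu_N}\zeta^\ast\colon\delta_*\sho_{S'}\to\sho_S$ retracts the unit $\sho_S\hookrightarrow\delta_*\sho_{S'}$, exhibiting $\sho_S$ as a direct summand of $\delta_*\sho_{S'}$. Applying $p_X^{-1}$ and invoking the projection formula (together with the base change $\delta_*\sho_{\XS'}\simeq\sho_{\XS}\otimes_{p^{-1}\sho_S}p^{-1}\delta_*\sho_{S'}$) would give a $\DXS$-linear decomposition $\delta_*\delta^*\shn\simeq\shn\oplus\shn'$. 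The $\DXS$-linearity is immediate because $\delta=\id_X\times\delta_S$ acts trivially on derivations along $X$. Thus the unit $\shn\to\delta_*\delta^*\shn$ is a split monomorphism in $\Mod(\DXS)$.

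Next I would invoke the natural adjunction
\[
\delta_*\Rhom_{\shd_{X\times S'/S'}}(\delta^*\shm,\delta^*\shn)\simeq\Rhom_{\DXS}(\shm,\delta_*\delta^*\shn),
\]
obtained by combining the sheaf-level $(\delta^{-1},\delta_*)$ adjunction with the ring-extension adjunction associated with $\delta^{-1}\sho_{\XS}\to\sho_{\XS'}$. No derived subtleties arise: $\delta_*$ is exact since $\delta$ is finite, and $\delta^*$ is exact since $\sho_{\XS'}$ is flat over $\delta^{-1}\sho_{\XS}$ (as recalled at the beginning of this section). Upgrading from an $\sho$-linear to a $\DXS$-linear identification is automatic because $\delta$ is the identity on $X$, so both $\delta^*$ and $\delta_*$ respect the action of $\shd_X$.

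Combining these two steps, $\Rhom_{\DXS}(\shm,\shn)$ becomes a direct summand of $\delta_*\Rhom_{\shd_{X\times S'/S'}}(\delta^*\shm,\delta^*\shn)$. By hypothesis the $\Rhom$ under $\delta_*$ lies in $\rD^\rb_{\cc}(p^{-1}_X\sho_{S'})$, so the remaining task is to check that $\delta_*$ sends $\rD^\rb_{\cc}(p^{-1}_X\sho_{S'})$ into $\rD^\rb_{\cc}(\pOXS)$ and then to invoke closure of $\rD^\rb_{\cc}(\pOXS)$ under direct summands. For the preservation property: finiteness of $\delta$ yields $\R$-constructibility of the pushforward; the relative coherent structure transports via $\delta_*p^{-1}_X\sho_{S'}\simeq p^{-1}_X\delta_*\sho_{S'}$, with $\delta_*\sho_{S'}$ a coherent $\sho_S$-module; and the $\C^\ast$-conic microsupport condition is local and stable under proper direct image.

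The step I expect to require the most care is this last verification that $\delta_*$ preserves $\C$-constructibility in the relative sense (rather than merely $\R$-constructibility of underlying sheaves), together with ensuring that the adjunction of the second paragraph genuinely lifts to the $\DXS$-linear derived setting on $\rD^\rb_{\coh}$; the direct-summand reduction itself is a formal consequence of the Galois nature of $\delta$ and of the projection formula.
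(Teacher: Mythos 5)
Your proof is correct and follows essentially the same route as the paper: both reduce to showing $\Rhom_{\DXS}(\shm,\delta_*\delta^*\shn)\in\rD^\rb_{\cc}(\pOXS)$ via the split monomorphism $\shn\hto\delta_*\delta^*\shn$, then apply the $(\delta^{-1},\delta_*)$ and ring-extension adjunctions to identify this with $\delta_*\Rhom_{\shd_{X\times S'/S'}}(\delta^*\shm,\delta^*\shn)$, and conclude by stability of $\rD^\rb_{\cc}$ under pushforward by the finite map $\delta$. The only cosmetic difference is that you produce the retraction of $\sho_S\hto\delta_*\sho_{S'}$ via the $\mu_N$-trace, whereas the paper simply invokes that $\delta_*\sho_{S'}$ is a free $\sho_S$-module containing $\sho_S$ as the degree-zero summand.
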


\begin{proof}
Since $\shn$ is a direct summand of $\delta_*\delta^*\shn$ it suffices to deduce from the assumption that $\Rhom_{\DXS}(\shm,\delta_*\delta^{*}\shn)$ is an object of $\rD^\rb_{\cc}(p_X^{-1}\sho_S)$. Thanks to the adjunction morphism we have in $\rD^\rb(\pOXS)$
\begin{align*}
\Rhom_{\DXS}(\shm, \delta_*\delta^*\shn)
&\simeq\delta_*\Rhom_{\delta^{-1}\DXS}(\delta^{-1}\shm, \delta^*\shn)\\
&\simeq\delta_*\Rhom_{\shd_{X\times S'/S'}}(\delta^*\shm, \delta^*\shn).
\end{align*}
The result is then a consequence of the assumption and the properness of~$\delta$.
\end{proof}

The following result will be used in Section~\ref{S3}. Recall (\cf\cite[(3.17)]{MFCS2}) that, for $F\in \rD^\rb_\rc(\pOXS)$, there exists a natural morphism
\[
\RH^S_X(F)\to\Rhom_{\pOXS}(F,\sho_{\XS})[d_X].
\]

\begin{lemma}\label{D-type 5}
Let $\shl$ be a $\DXS$-module of D-type and
let $F$ be an object of $\rD^\rb_\rc(\pOXS)$ be such that $F\simeq F\otimes \C_{(X\setminus D)\times S}$.
Then the morphism
\begin{multline*}
\beta_{\shl, F}: \Rhom_{\DXS}(\shl, \RH^S_X(F))\\
\to\Rhom_{\DXS}(\shl, \Rhom_{\pOXS}(F,\sho_{\XS})[d_X])
\end{multline*}
is an isomorphism.
\end{lemma}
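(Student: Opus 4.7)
The morphism $\beta_{\shl,F}$ is induced by the natural morphism
\[
\alpha_F:\RH^S_X(F)\to\Rhom_{\pOXS}(F,\sho_{\XS})[d_X]
\]
coming from $\sho^{t,S}_{\XS}\to\sho_{\XS}$. My plan is to apply a tensor-Hom adjunction to rewrite both sides as $\Rhom_{\pOXS}(F,-)$ applied to two variants of the solution complex of $\shl$ --- the usual one $\pSol_X(\shl)$ and a tempered one
\[
\pSol^{t,S}_X(\shl):=\rho'^{-1}\Rhom_{\DXS}(\shl,\sho^{t,S}_{\XS})[d_X]
\]
--- and then exploit that the natural morphism $\pSol^{t,S}_X(\shl)\to\pSol_X(\shl)$ becomes invertible after Hom-ing out of $F$, because $F$ is localized on $X^*\times S$.

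On the right-hand side, the $(\DXS,\pOXS)$-bimodule structure of $\sho_{\XS}$ yields directly
\[
\Rhom_{\DXS}(\shl,\Rhom_{\pOXS}(F,\sho_{\XS})[d_X])\simeq\Rhom_{\pOXS}(F,\pSol_X(\shl)).
\]
For the left-hand side, one performs the analogous adjunction on the subanalytic site: using the $(\DXS,\pOXS)$-bimodule structure of $\sho^{t,S}_{\XS}$, the adjoint pair $(\rho'^{-1},\rho'_*)$, and the $\DXS$-coherence of $\shl$, one obtains
\[
\Rhom_{\DXS}(\shl,\RH^S_X(F))\simeq\Rhom_{\pOXS}\bigl(F,\pSol^{t,S}_X(\shl)\bigr),
\]
and under these identifications $\beta_{\shl,F}$ becomes $\Rhom_{\pOXS}(F,-)$ applied to the natural morphism $\pSol^{t,S}_X(\shl)\to\pSol_X(\shl)$.

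Since $F\simeq F\otimes\C_{(X\setminus D)\times S}$, the functor $\Rhom_{\pOXS}(F,-)$ only depends on the restriction of its argument to $X^*\times S$, so it suffices to check that $\pSol^{t,S}_X(\shl)\to\pSol_X(\shl)$ is an isomorphism on $X^*\times S$. But at every point $x\in X^*\times S$ the germ of $\rho'^{-1}\sho^{t,S}_{\XS}$ coincides with the germ of $\sho_{\XS}$ (every holomorphic function is bounded, hence tempered, on a small enough subanalytic ball around $x$), so $\rho'^{-1}\sho^{t,S}_{\XS}|_{X^*\times S}\simeq\sho_{\XsS}$; consequently $\pSol^{t,S}_X(\shl)|_{X^*\times S}\simeq\pSol_X(\shl)|_{X^*\times S}$, both being the relative de Rham complex of the flat bundle $V=\shl|_{X^*\times S}$ with coefficients in $\sho_{\XsS}$. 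Alternatively, the $X^*$-localization of $\pSol_X(\shl)$ itself also follows from Theorems \ref{Dt} and \ref{TRHS}, which give $\pSol_X(\shl)\simeq j_!\bD(F_\shl[d_X])$ with $F_\shl=\shh^0\DR_X(\shl)_{|X^*\times S}$.

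\textbf{Main obstacle.} The delicate technical point is the subanalytic tensor-Hom adjunction producing the identification of the LHS: one must commute $\Rhom_{\DXS}(\shl,-)$ with $\rho'^{-1}$ and with $\Rhom_{\rho'_*\pOXS}(\rho'_*F,-)$. This relies on the $\rho'_!\DXS$-module structure of $\sho^{t,S}_{\XS}$ recalled in Section \ref{subsec:relsubanalytic} (drawn from \cite{TL,EP}) combined with the $\DXS$-coherence of $\shl$. Once this adjunction is in place, the remaining argument is a local sheaf-theoretic verification on $X^*\times S$.
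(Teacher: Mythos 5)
Your plan hinges on the identification
\[
\Rhom_{\DXS}(\shl,\RH^S_X(F))\simeq\Rhom_{\pOXS}\bigl(F,\pSol^{t,S}_X(\shl)\bigr),
\]
and this step does not hold. Using the coherence of $\shl$, one can indeed commute $\Rhom_{\DXS}(\shl,-)$ past $\rho'^{-1}$, and then apply a bimodule Hom-swap \emph{inside} the subanalytic topos, arriving at
\[
\rho'^{-1}\Rhom_{\rho'_*\pOXS}\bigl(\rho'_*F,\ \Rhom_{\rho'_!\DXS}(\rho'_!\shl,\sho^{t,S}_{\XS})\bigr)[d_X].
\]
But the final step of your adjunction --- pushing $\rho'^{-1}$ inside $\Rhom_{\rho'_*\pOXS}(\rho'_*F,-)$ to get $\Rhom_{\pOXS}(F,\rho'^{-1}(\cdots))$ --- is false in general: unlike $\shl$, the sheaf $F$ has no finite free $\pOXS$-resolution, and $\rho'^{-1}$ does not commute with $\Rhom$ out of $\rho'_*F$. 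Concretely, sections of the inner complex over a subanalytic open $W$ still impose tempered growth conditions along $\partial W\cap(D\times S)$, even if $F$ is supported on $X^*\times S$; taking $\rho'^{-1}$ last retains this information, while taking $\rho'^{-1}$ first discards it. This is precisely the distinction between the tempered and the ordinary solution complexes.

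There is a self-check that should have raised a flag: as you observe, the stalkwise statement $(\rho'^{-1}\sho^{t,S}_{\XS})_x\simeq\sho_{\XS,x}$ holds at \emph{every} point of $X\times S$, not only on $X^*\times S$, so that $\pSol^{t,S}_X(\shl)\simeq\pSol_X(\shl)$ globally. Combined with your claimed adjunction this would yield the conclusion for arbitrary $F$, with no use of the hypothesis $F\simeq F\otimes\C_{(X\setminus D)\times S}$. That hypothesis is genuinely needed, and in the paper's proof it is used in an essential, explicit way: after reducing via Propositions~\ref{ram} and~\ref{ram4} to a ramified cover where $\shl$ is presented as $\shd_{X\times S'/S'}$ modulo the operators $x_i\partial_{x_i}-\alpha_i(s)$ and $\partial_{x_j}$, and $F=\pOXS\otimes\C_{(\Omega\setminus D)\times S}$, multiplication by $\prod_i|x_i|^{2\alpha_i(s)}$ conjugates $\Rhom_{\DXS}(\shl,-)$ into $\Rhom_{\DXS}(\sho_{\XS},-)$ on both the tempered side $\tho(\C_{(\Omega\setminus D)\times S},\Db_{\XS})$ and the non-tempered side $\Gamma_{(\Omega\setminus D)\times S}\Db_{\XS}$ --- the localization hypothesis is exactly what makes this multiplication an automorphism. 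That reduces everything to the known case $\shl=\sho_{\XS}$ (\cite[Lem.\,3.19]{MFCS2}). A formal subanalytic tensor-Hom interchange cannot see these growth conditions, so the gap is not merely technical; a different mechanism is needed, and the paper's conjugation trick is that mechanism.
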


\begin{proof}
The case where $\shl=\sho_{\XS}$ was proved in \cite[Lem.\,3.19]{MFCS2}. We aim at reducing to this case. The statement has a local nature so we choose local coordinates $x_1,\dots,x_n$ in~$X$ such that $D=\{x\in X\mid x_1\cdots x_\ell=0\}$, and we assume that $S$ is a disc of small enough radius with coordinate $s$. Let $\delta:S'\to S$ be a finite morphism ramified at $s=0$ only. According to Propositions~\ref{ram} and~\ref{ram4}, the assertion of the lemma holds for $(\shl,F)$ if it holds for $(\delta^*\shl,\delta^*F)$. By Theorem \ref{th:Dtype}\eqref{th:Dtype2} and the same argument as in the proof of \cite[Cor.\,2.8]{MFCS2}, there exists $\delta$ such that $\delta^*\shl$ is isomorphic to
\[
\shd_{X\times S'/S'}\Big/\Bigl(\textstyle\sum_{i=1}^\ell\shd_{X\times S'/S'}(x_i\partial_{x_i}-\alpha_i)+\sum_{i=\ell+1}^n\shd_{X\times S'/S'} \partial _{x_i}\Bigr)
\]
for some holomorphic functions $\alpha_i$, $i=1,\dots,\ell$, on $S'$. We will therefore assume that $\shl$ is already of this form. We can replace $F$ with a resolution as given by Proposition \ref{P:AF} and, since the result is local, we may reduce to the case $F=\pOXS\otimes\C_{(\Omega\setminus D)\times S}$ (since $F\simeq F\otimes \C_{(X\setminus D)\times S}$) for some relatively compact open subanalytic subset $\Omega$ of $X$. We have
\[
\RH^S(F)\simeq \tho(\C_{(\Omega\setminus D)\times S}, \sho_{\XS})[d_X]
\]
and
\[
\Rhom _{\pOXS}(F, \sho_{\XS})[d_X]\simeq
\Rhom (\C_{(\Omega\setminus D)\times S}, \sho_{\XS})[d_X].
\]
Let us consider the automorphism $\Phi$ induced on $\tho(\C_{(\Omega\setminus D)\times S}, \Db_{\XS})$ and on $\Rhom (\C_{(\Omega\setminus D)\times S}, \Db_{\XS})\simeq \Gamma_{(\Omega\setminus D)\times S}(\Db_{\XS})$
by multiplication by the real analytic function $|x_1|^{2 \alpha_1(s)}|x_2|^{2\alpha_2(s)}\cdots |x_\ell|^{2\alpha_\ell(s)}$.
Then, in $\rD^\rb(\pOXS)$, $\Phi$ induces isomorphisms\vspace*{-3pt}
\begin{multline*}
\Phi_1:\Rhom_{\DXS}(\shl, \tho(\C_{(\Omega\setminus D)\times S}, \Db_{\XS}))\\
\isom \Rhom_{\DXS}(\sho_{\XS}, \tho(\C_{(\Omega\setminus D)\times S}, \Db_{\XS}))
\end{multline*}
and\vspace*{-3pt}
\begin{multline*}
\Phi_2:\Rhom_{\DXS}(\shl, \Gamma_{(\Omega\setminus D)\times S}(\Db_{\XS}))\\
\isom\Rhom_{\DXS}(\sho_{\XS}, \Gamma_{(\Omega\setminus D)\times S} (\Db_{\XS})).
\end{multline*}
We derive a morphism in $\rD^\rb(\pOXS)$\vspace*{-3pt}
\begin{multline*}
\Phi_2\mu\Phi_1^{-1}:\Rhom_{\DXS}(\sho_{\XS}, \tho(\C_{(\Omega\setminus D)\times S}, \Db_{\XS}))\\\to\Rhom_{\DXS}(\sho_{\XS},\Gamma_{(\Omega\setminus D)\times S} (\Db_{\XS}))
\end{multline*}
which coincides with the natural one. By the realification procedure (\cf Sec\-tion~\ref{subsubsec:real}), we are thus reduced to the case $\shl=\sho_{\XS}$, as wanted.
\end{proof}

\section{Relative Riemann-Hilbert correspondence}\label{S3}

Proving Theorem \ref{RHH} is equivalent to proving Theorem \ref{Tequivtf}. Indeed, in one direction, let us recall the method introduced in \cite[\S 4.3]{MFCS2} to deduce Theorem~\ref{RHH} from Theorem~\ref{Tequivtf}. According to \cite[(3.17)]{MFCS2}, there exists a natural morphism of bifunctors from $\rD^\rb_{\rhol}(\DXS)^\mathrm{op}\times \rD^\rb_{\rc}(\pOXS)$ to
$ \rD^\rb(\pOXS)$:\vspace*{-3pt}
\begin{multline}\label{E:20}
\Rhom_{\DXS}(\shm, \RH^S_X(F))\\
\to \Rhom_{\DXS}(\shm, \Rhom_{\pOXS}(F, \sho_{X\times S}[d_X] ))\\
\hfil \simeq \Rhom_{\pOXS}(F, \pSol_X(\shm)),
\end{multline}
where the last isomorphism is an application of \cite[(2.6.7)]{KS1}. Notice that the right-hand side of \eqref{E:20} is an object of $\rD^\rb_{\cc}(\pOXS)$ provided that $F\in \rD^\rb_{\cc}(\pOXS)$. In that case, by Theorem~\ref{Tequivtf}, the left-hand side is also an object of $\rD^\rb_{\cc}(\pOXS)$. In particular, $\Rhom_{\DXS}(\shm, \RH^S_X(F))_{(x,s)}$ has $\sho_{S,s}$-finitely generated cohomologies for any $(x,s)\in X\times S$. By the variant of Nakayama's lemma recalled in  Section \ref{subsubsec:relativeCc}, and since $Li_{s_o}^*\eqref{E:20}$ is an isomorphism for any $s_o\in S$ (this is the absolute case, where the result is known (cf.\,\cite[Cor.\,8.6]{Ka3}), we conclude that \eqref{E:20} is an isomorphism. Replacing~$F$ with $\pSol_X\shm$, we deduce an isomorphism of functors\vspace*{-3pt}
\[
\Id_{\rD^\rb_{\rhol}(\DXS)}\To{\beta} \RH^S_X \circ \pSol_X,
\]
concluding Theorem \ref{RHH} as explained in the introduction.

Conversely, Theorem \ref{RHH} implies Theorem \ref{Tequivtf} since the former implies full faithfulness of $\pSol$, so we have a natural isomorphism\vspace*{-3pt}
\[
\Rhom_{\DXS}(\shm, \RH^S_X(F))\simeq\Rhom_{\pOXS}(F, \pSol_X\shm)
\]
and the right-hand side belongs to $\rD^\rb_{\cc}(\pOXS)$.

\subsection{Proof of Theorems \ref{RHH} and \ref{Tequivtf} in the torsion case}\label{sec:TRHE}

Recall that, according to Proposition~\ref{Charhol} a holonomic $\DXS$-module $\shm$ is torsion if and only if $\supp(M)\subseteq X\times T$ with $\dim T=0$. In that case we have the following result.

\begin{proposition}\label{L:tor0}
Let $\shm\in \mathrm{Mod}_{\rhol}(\DXS)$ be a torsion $\DXS$-module. Then $\tilde{\shm}:=\shd_{\XS}\otimes_{\DXS}\shm$ is a regular holonomic $\shd_{\XS}$-module.
\end{proposition}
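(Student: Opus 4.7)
The plan is to work locally and induct on the minimal integer $N$ such that $t^N \shm = 0$, where $t = s - s_0$ is a local coordinate vanishing at a chosen point $s_0$ in the support of $\shm$. Since $\shm$ is coherent and torsion, its support is contained in $X \times T$ with $T \subset S$ discrete (Proposition \ref{Charhol}), and on a relatively compact open subset of $X$ such an $N$ exists; hence we reduce to the case $\supp \shm = X \times \{s_0\}$ with $t^N \shm = 0$.

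For the base case $N=1$, the relation $t\shm = 0$ makes $\shm$ a coherent $\shd_X$-module (the $\DXS$-action factors through $\DXS/t\DXS \simeq \shd_X$); call it $M \simeq Li^*_{s_0}\shm$, which is regular holonomic by the regularity hypothesis on $\shm$. By the PBW-type decomposition, $\shd_{\XS}$ is a free right $\DXS$-module with basis $\{\partial_s^k\}_{k\geq 0}$, so $\tilde\shm = \bigoplus_{k \geq 0} \partial_s^k \otimes_\C M$. A direct computation using $[s, \partial_s^k] = -k \partial_s^{k-1}$ together with $tm = 0$ yields $\partial_s \cdot (\partial_s^k \otimes m) = \partial_s^{k+1} \otimes m$ and $t \cdot (\partial_s^k \otimes m) = -k \partial_s^{k-1} \otimes m$, which are the standard formulas for the $\shd_{\XS}$-module direct image $\Di_{s_0, *} M$ under the closed immersion $i_{s_0}: X \hto \XS$. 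Since $\Di_{s_0, *}$ preserves regular holonomicity, $\tilde\shm$ is regular holonomic.

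For the inductive step, consider the short exact sequence $0 \to t\shm \to \shm \to \shm/t\shm \to 0$ in $\Mod_\coh(\DXS)$. Both outer terms are coherent holonomic $\DXS$-modules (their characteristic varieties are contained in $\Char(\shm)$) and are annihilated by $t^{N-1}$ and $t$ respectively; applying $Li^*_{s_0}$ yields a distinguished triangle in $\rD^\rb_\hol(\shd_X)$, and since regular holonomic $\shd_X$-modules form a Serre subcategory of $\Mod_\coh(\shd_X)$, both outer terms are regular as relative $\DXS$-modules. The freeness (hence flatness) of $\shd_{\XS}$ over $\DXS$ established above implies that $\shd_{\XS} \otimes_{\DXS} (\cbbullet)$ is exact, so we obtain a short exact sequence of $\shd_{\XS}$-modules whose outer terms are regular holonomic by the inductive hypothesis and the base case. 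Hence $\tilde\shm$ is regular holonomic by closure under extensions. The main obstacle in this argument is the base case identification $\tilde\shm \simeq \Di_{s_0, *} M$; once that is established, the rest is essentially formal.
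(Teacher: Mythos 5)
Your proof is correct and follows essentially the same strategy as the paper: reduce by induction to the case $t\shm=0$, identify $\tilde\shm$ explicitly, and invoke a known stability property of regular holonomicity. The only cosmetic difference is in the base case: you recognize $\tilde\shm=\bigoplus_k\partial_s^k\otimes M$ as the closed-embedding pushforward $\Di_{s_0,*}M$, whereas the paper phrases the same object as the external tensor product $\shm_0\boxtimes\shd_S/\shd_S s$ (and invokes closure of regular holonomic $\shd_{\XS}$-modules under $\boxtimes$ rather than under proper direct image) — these are the same module and the two stability facts are interchangeable here.
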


\begin{proof}
The statement being local, we may assume that $\Char(\shm)=\Lambda\times \{s_o\}$, where $\Lambda$ is a Lagrangian $\C^*$-conic closed analytic subset in $T^*X$, and, taking a local coordinates $s$ on $S$ vanishing at $s_o$, there exists $n\in\N$ such that $s^n\shm=0$. Since we are dealing with triangulated categories, by an easy argument by induction on $n$ we may assume that $n=1$. In that case, we have $\shm\simeq \shm_0\boxtimes \sho_S/\sho_Ss$, where, by the assumption of relative regularity, $\shm_0$ is a regular holonomic $\shd_X$-module satisfying $\Char(\shm_0)=\Lambda$. By construction $\tilde{\shm}\simeq \shm_0\boxtimes \shd_S/\shd_Ss$ and $\Char(\tilde{\shm})=\Lambda\times T^*_{T}S=:\tilde{\Lambda}$.

Therefore $\tilde{\shm}$ is a regular holonomic $\shd_{\XS}$-module since the category of regular holonomic $\shd_{\XS}$-modules is closed under external tensor product.
\end{proof}

We denote by
$\rD^\rb_{\rhol}(\DXS)_\rt$ the thick subcategory of $\rD^\rb_{\rhol}(\DXS)$
whose objects have support in $X\times T$ with $\dim T=0$.

\begin{proposition}\label{Tequiv1}
The solution functor $\pSol$ restricted to $\rD^\rb_{\rhol}(\DXS)_\rt$ is an equivalence of categories
\[
\pSol_X:\rD^\rb_{\rhol}(\DXS)_\rt\to \rD^\rb_{\cc}(\pOXS)_\rt
\]
with quasi-inverse the restriction of the functor
$\RH^S_X$ to $\rD^\rb_{\cc}(\pOXS)_\rt$.
\end{proposition}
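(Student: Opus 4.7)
The plan is to reduce Proposition~\ref{Tequiv1} to the absolute Riemann--Hilbert correspondence of Kashiwara--Mebkhout, mimicking the Nakayama strategy outlined at the beginning of Section~\ref{S3}. First I would check that both functors preserve the torsion subcategories: if $\shm\in\rD^\rb_{\rhol}(\DXS)_\rt$ has support in $X\times T$ with $T\subset S$ discrete, then $\supp \pSol_X(\shm)\subseteq X\times T$, and combining this with the preservation $\pSol_X:\rD^\rb_\hol(\DXS)\to\rD^\rb_\cc(\pOXS)$ from Section~\ref{subsec:solfunctor} shows $\pSol_X(\shm)\in\rD^\rb_{\cc}(\pOXS)_\rt$; the symmetric statement for $\RH^S_X$ follows similarly from Theorem~\ref{TRHS}. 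Since Theorem~\ref{TRHS} already supplies the functorial isomorphism $\alpha_F\colon F\isom\pSol_X(\RH^S_X F)$ on the whole of $\rD^\rb_{\cc}(\pOXS)$, it remains to show that the canonical morphism $\beta_\shm\colon \shm\to\RH^S_X(\pSol_X\shm)$ described in the introduction is an isomorphism whenever $\shm\in\rD^\rb_{\rhol}(\DXS)_\rt$.

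By the variant of Nakayama's lemma recalled in Section~\ref{subsec:prelimD}, it is enough to verify that $Li^*_{s_o}\beta_\shm$ is an isomorphism in $\rD^\rb_\hol(\shd_X)$ for every $s_o\in S$; for $s_o$ outside the discrete support $T$ of $\shm$ both sides vanish, so I may assume $s_o\in T$. Following the devissage underlying the proof of Proposition~\ref{L:tor0}, after applying the standard $t$-structure truncation and inducting on the nilpotency order of $s-s_o$ acting on $\shm$ via the torsion filtration, the five-lemma reduces the problem to the base case $\shm\simeq \shm_0\boxtimes\sho_S/(s-s_o)\sho_S$ with $\shm_0\in\Mod_{\rhol}(\shd_X)$, i.e.\ $\shm$ is the naive pushforward $i_{s_o*}\shm_0$ endowed with its canonical $\DXS$-module structure.

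In this base case, Proposition~\ref{com} computes
\[
Li^*_{s_o}\RH^S_X(\pSol_X\shm)[-d_X]\simeq \tho(Li^*_{s_o}\pSol_X\shm,\sho_X),
\]
while the Koszul-type resolution $0\to\sho_{\XS}\xrightarrow{s-s_o}\sho_{\XS}\to i_{s_o*}\sho_X\to 0$ yields $Li^*_{s_o}\shm\simeq \shm_0\oplus\shm_0[1]$ together with an analogous computation of $Li^*_{s_o}\pSol_X\shm$ expressing it in terms of the absolute solution complex $\Sol(\shm_0)$. Invoking the absolute Riemann--Hilbert correspondence in the form $\tho(\Sol(\shm_0),\sho_X)\simeq \shm_0[d_X]$ and assembling the shifts, one verifies that the two sides of $Li^*_{s_o}\beta_\shm$ agree, which concludes the proof. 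The main obstacle is this last bookkeeping: $Li^*_{s_o}$ applied to a torsion module is a genuine two-term complex, and the identifications coming from Proposition~\ref{com} and the absolute correspondence must be combined with the correct degree shifts so that the resulting object recovers $Li^*_{s_o}\shm$ rather than a twisted or shifted variant.
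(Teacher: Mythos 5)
Your proof has a genuine logical gap at its starting point: you take for granted the existence of the canonical morphism $\beta_\shm\colon\shm\to\RH^S_X(\pSol_X\shm)$ ``described in the introduction,'' but that morphism is only defined once one knows that $\pSol_X$ is faithful on the relevant objects --- the introduction explicitly says the existence of $\beta$ is deduced \emph{from} Theorem~\ref{Tequivtf}, whose proof in turn invokes the torsion case (Proposition~\ref{Tequiv1}) through Lemma~\ref{Luisa}\eqref{g}. So you cannot assume $\beta_\shm$ is available when proving Proposition~\ref{Tequiv1}: the argument is circular. What \emph{is} available at this stage is the natural morphism \eqref{E:20}, namely $\Rhom_{\DXS}(\shm,\RH^S_X(G))\to\Rhom_{\pOXS}(G,\pSol_X\shm)$; but this goes the wrong way to hand you $\beta_\shm$ unless you first prove it is an isomorphism, which is exactly what needs proving.

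The paper takes a different route that avoids constructing the unit $\beta$ entirely. Its central tool is Proposition~\ref{L:tor0}: for a torsion regular holonomic $\shm$, the induced \emph{absolute} module $\shd_{\XS}\otimes_{\DXS}\shm$ is regular holonomic on $\XS$. Combined with the resolution of $G$ by $\shs$-objects (Proposition~\ref{P:AF}), this converts the relative $\Rhom_{\DXS}(\shm,\RH^S_X(G))$ into an absolute $\Rhom_{\shd_{\XS}}(\shd_{\XS}\otimes_{\DXS}\shm,\cdot)$ by adjunction, to which Kashiwara's absolute theorem \cite[Cor.\,8.6]{Ka3} applies directly, showing \eqref{E:20} is an isomorphism for torsion $\shm$ and \emph{any} $G\in\rD^\rb_\rc(\pOXS)$. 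Full faithfulness of $\RH^S_X$ on torsion objects, and hence the equivalence, drops out formally. You mention Proposition~\ref{L:tor0} only to organise the d\'evissage, not as the engine of the reduction to the absolute case; your fibrewise strategy via $Li^*_{s_o}$ and Proposition~\ref{com} is not wrong in spirit, and the d\'evissage and five-lemma step is fine, but without a non-circular construction of $\beta_\shm$ the Nakayama argument has nothing to apply to. The ``bookkeeping'' you flag at the end is also where you would have to verify that the morphism you eventually construct in the base case is natural enough to propagate through the d\'evissage, which is nontrivial and left entirely implicit.
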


\begin{proof}
It is sufficient to prove that the restriction of
$\RH^S_X$ to $\rD^\rb_{\cc}(\pOXS)_{t }$ is fully faithful.
Indeed $\pSol$ is essentially surjective since, according to Theorem \ref{TRHS}, for any
$F \in\rD^\rb_{\cc}(\pOXS)$ we have $F\simeq \pSol_X \RH^S_X(F)$,
and in the case of a torsion object $F$ in $\rD^\rb_{\cc}(\pOXS)_\rt$ we have
$\RH^S_X(F)\in \rD^\rb_{\rhol}(\DXS)_\rt$.

For the full faithfulness it is enough to prove that the morphism:
\begin{multline*}
\Rhom_{\DXS}(\shm, \RH^S_X(G))\\
\to\Rhom_{\DXS}(\shm, \Rhom_{\pOXS}(G,\sho_{\XS})[d_X])
\end{multline*}
is an isomorphism for any $\shm\!\in\!\rD^\rb_{\rhol}(\DXS)_\rt$ and for any $G\!\in\!\rD^\rb_\rc(\pOXS)$.

The cohomologies of $\shm$ are regular holonomic $\DXS$-modules and,
accor\-ding to Proposition~\ref{L:tor0},
$\shd_{\XS}\otimes_{\DXS}\shm$ is a complex whose cohomologies are regular holonomic.

Thanks to Proposition \ref{P:AF}, we may assume that
\hbox{$G\!=\!\pOXS\otimes\C_{\Omega\times S}$}
for some open subanalytic subset $\Omega$ of $X$, hence
\[
\RH^S_X(G)=\tho(\C_{\Omega\times S}, \sho_{\XS})[d_X],
\]
which is a complex with $\shd_{\XS}$-modules as cohomologies and we get a chain of isomorphisms
\begin{align*}
\Rhom_{\DXS}&(\shm, \RH^S_X(G))\simeq \Rhom_{\shd_{\XS}}(\shd_{\XS}\otimes_{\DXS}\shm, \RH^S_X(G))\\
&\overset{(*)}{\simeq}
\Rhom_{\shd_{\XS}}(\shd_{\XS}\otimes_{\DXS}\shm, \Rhom(\C_{\Omega\times S},\sho_{\XS})[d_X])\\
&\simeq \Rhom_{\DXS}(\shm, \Rhom(\C_{\Omega\times S},\sho_{\XS})[d_X])\\
&\simeq \Rhom_{\DXS}(\shm, \Rhom_{\pOXS}(G,\sho_{\XS})[d_X]),
\end{align*}
where isomorphism $(*)$ follows by \cite[Cor.\,8.6]{Ka3}.
\end{proof}

\subsection{Proof of Theorem \ref{Tequivtf}, main propostion}\label{subsec:proofmain}

For a regular holonomic $\DXS$-module $\shm$, let us set (see Proposition~\ref{Charhol})
\begin{equation}\label{eq:strictperv2}
\begin{gathered}
\Char(\shm):=\bigcup_{j}\Char(\shh^j\shm)=\bigcup_{i\in I}\Lambda_i\times\nobreak T_i\\
\supp(\shm)=\bigcup_{i\in I}Y_i\times T_i,\quad \supp_X(\shm)=Z=Z_{\shm}:= \bigcup_{i\in I}Y_i.
\end{gathered}
\end{equation}

\begin{proposition}\label{P:strictperv2}
Let $\shm$ be a strict regular holonomic $\DXS$-module with $X$-support $Z$. Let $Y\subset X$ be a hypersurface containing the singular locus $\mathrm{Sing}(Z)$ and all subsets $Y_i$ with $\dim Y_i<\dim Z$. Then the localized $\DXS$-module $\shm(*(\YS))$ is regular holonomic and locally isomorphic to the projective pushforward of a relative $\shd$-module of D\nobreakdash-type.
\end{proposition}

\begin{proof}
The question is local. The assumption on $Y$ implies that $Z^o:=Z\moins(Y\cap Z)$ is smooth of pure dimension $\dim Z$ and the characteristic variety of $\shm_{|(X\moins Y)\times S}$ is contained in $(T^*_{Z^o}X)\times S$. By Kashiwara's equivalence, $\shm_{|(X\moins Y)\times S}$ is the pushforward by the inclusion map of a coherent $\sho_{Z^o\times S}$-module with flat relative connection. The strictness assumption entails that this flat relative connection is of the form $(\sho_{Z^o\times S}\otimes_{\pOS} F,\rd_{Z^o\times S/S})$ for some locally constant $p^{-1}_{Z^o}\sho_S$-module $F$ which is  \emph{locally free of finite rank}.

One can find a complex manifold~$X'$ together with a divisor with normal crossings $Y'\subset X'$ and a projective morphism $\pi:X'\to X$ which induces a biholomorphism \hbox{$X'\moins Y'\isom Z^o$}. We~set $\delta=\dim Z-\dim X=\dim X'-\dim X\leq0$. For each $\ell$, we consider the $\DXpS$-module $\shm^{\prime\ell}:=\shh^\ell\Dpi^*\shm$. Although we cannot yet conclude it is coherent, the latter is locally an inductive limit (union) of coherent $\DXpS$-submodules, and also of $\sho_{\XpS}$-coherent submodules (\cf \cite[Prop.\,2.1]{L-S87} and its proof). We simply say that $\shm^{\prime\ell}$ is quasi-coherent (over $\DXpS$ or over $\sho_{\XpS}$). We will use the following property, that is deduced from the similar one for coherent $\sho_{\XpS}$-modules:
\begin{itemize}
\item[$(*)$]
A quasi-coherent $\sho_{\XpS}$-module which is zero on $(X'\moins Y')\times S$ becomes zero after being tensored with $\sho_{\XpS}(*(\YpS))$.
\end{itemize}

If $\ell\neq\delta$, the sheaf-theoretic restriction of $\shm^{\prime\ell}$ to $(X'\moins Y')\times S$ is zero, so $\shm^{\prime\ell}(*(\YpS))=0$ owing to quasi-coherence, according to $(*)$. Since $\sho_{\XpS}(*(\YpS))$ is flat over $\sho_{\XpS}$, we conclude that
\begin{equation}\label{eq:DpiMloc}
\Dpi^*\bigl(\shm(*(\YS))\bigr)[\delta]\simeq\bigl(\Dpi^*\shm\bigr)(*(\YpS))[\delta]\simeq\shm^{\prime\delta}(*(\YpS)).
\end{equation}
We will first check that $\shm^{\prime\delta}(*(\YpS))$ is strict (\ie \hbox{$Li_s^*\shm^{\prime\delta}(*(\YpS))$} has cohomology in degree zero only, \cf\cite[Lem.\,1.13]{MFCS2}). Strictness of $\shm(*(\YS))$ follows from flatness of $\sho_{\XS}(*(\YS))$ over $\sho_{\XS}$. Furthermore, as a complex of $\sho_{\XpS}$-modules, $\Dpi^*\bigl(\shm(*(\YS))\bigr)$ is nothing but \hbox{$L\pi^*\bigl(\shm(*(\YS))\bigr)$}. We then have, for each $s\in S$,
\begin{align*}
Li^*_s\shm^{\prime\delta}(*(\YpS))&\simeq Li^*_sL\pi^*\bigl(\shm(*(\YS))\bigr)[\delta]\quad (\text{according to \eqref{eq:DpiMloc}})\\
&\simeq L\pi^*(i^*_s\shm)(*Y)[\delta]\quad (\text{strictness of }\shm(*(\YS)))\\
&\simeq L^\delta\pi^*(i^*_s\shm)(*Y)\quad (\text{same argument as \eqref{eq:DpiMloc}})
\end{align*}
has cohomology in degree zero only, as wanted. The same argument shows that, while $\shm^{\prime\delta}(*(\YpS))$ may a~priori be non $\DXpS$-coherent, its restriction by $i^*_s$ is regular holonomic (hence $\shd_{X'}$\nobreakdash-coherent) for each $s\in S$.

We now take up the argument of \cite[Proof of Prop.\,2.11]{MFCS2} and show that $\shm^{\prime\delta}(*(\YpS))$ is regular holonomic and of D-type with respect to $Y'$. As noticed at the beginning of the proof, $F:=\ho_{\DXpS}(\sho_{\XpS}, \shm^{\prime\delta})|_{(X'\setminus Y')\times S}$ is locally free of finite rank. Let \hbox{$j':X'\moins Y'\hto X'$} denote the inclusion. The isomorphism
\[
j^{\prime-1}\shm^{\prime\delta}\isom(\sho_{(X'\setminus Y')\times S}\otimes_{\pOS}F,\rd_{\XpS/S})=:(V,\nabla)
\]
extends as a morphism of $\DXpS(*(\YpS))$-modules
\[
\psi:\shm^{\prime\delta}(*(\YpS))\to j'_*(V,\nabla).
\]
Let $m$ be a local section of $\shm^{\prime\delta}(*(\YpS))$. Since for each $s\in S$, $i^*_s\bigl(\shm^{\prime\delta}(*(\YpS))\bigr)=(i^*_s\shm^{\prime\delta})(*Y')$ is regular holonomic, the image $m(\cdot,s)$ of $m$ in the latter module has moderate growth in the sense of \cite[p.\,862]{K-K81} when restricted to $X'\moins Y'$. According to \cite[Lem.\,2.12]{MFCS2}, $\psi(m)$~is a local section of the Deligne extension $\wt V$ of $(V,\nabla)$, which is $\DXpS$-coherent by Theorem~\ref{th:Dtype}\eqref{th:Dtype1}. Then $\im\psi$, being quasi-coherent, is a coherent $\DXpS$-submodule of~$\wt V$. By applying~$(*)$ to the kernel and cokernel of $\psi$, we obtain that $\psi$ is an isomorphism.

According to Proposition \ref{Dirim}, $\Dpi_*\wt V$ has regular holonomic cohomology. Furthermore, since $\shh^j\Dpi_*\wt V$ is supported on $\YS$ for $j\neq0$, and since $\wt V=\wt V(*(\YpS))$, so that $\Dpi_*\wt V\simeq\Dpi_*\wt V(*(\YS))$, we have
\[
\Dpi_*\wt V\simeq \shh^0\Dpi_*\wt V\simeq\shh^0\Dpi_*\wt V(*(\YS)).
\]
On the other hand, there is a natural adjunction morphism (\cf \cite[Lem.\,4.28 \& Prop.\,4.34]{Ka2})
\[
\Dpi_*\Dpi^*\shm[\delta]\to\shm,
\]
which induces a morphism of coherent $\DXS(*(\YS))$-modules
\[
\shh^0\Dpi_*\wt V\simeq(\shh^0\Dpi_*\shm^{\prime\delta})(*(\YS))\to\shm(*(\YS)),
\]
where the left-hand side is $\DXS$-coherent and regular holonomic. Its cokernel is zero on $(X\moins Y)\times S$ and $\DXS(*(\YS))$-coherent, hence it is zero according to $(*)$, so that this morphism is an isomorphism. In~conclusion, $\shm(*(\YS))$ is regular holonomic.
\end{proof}

After the proof of Theorem \ref{RHH}, Proposition \ref{P:strictperv2} can be improved:

\begin{corollary}[of Theorem \ref{RHH}]\label{C:ststY}
For any $\shm$ in $\rD^{\rb}_{\rhol}(\DXS)$ and any hyper\-surface $Y\subset X$, the complexes
$R\Gamma_{[\YS]}(\shm)$ and $ \shm(*(\YS))$ belong to $\rD^{\rb}_{\rhol}(\DXS)$.
\end{corollary}

\begin{proof}
In view of the equivalence of Theorem \ref{RHH}, this reduces to Proposition~\ref{RHSloc}.
\end{proof}

\subsection{End of the proof of Theorem \ref{inverseimage}}
We can argue by induction on the length of~$\shm$ and then reduce to the cases of a projection and of a closed embedding. The first case was proved in Section \ref{subsubsec:smoothpullbackreghol}. The case of a closed embedding $i:Y\hto X$ is a consequence of Corollary \ref {C:ststY}.\qed

\subsection{End of the proof of Theorem \ref{Tequivtf}}\label{subsec:endTequivtf}
We refer to \cite[Lem.\,4.1.4]{KS6} which contains the guidelines for the proof of Theorem~\ref{Tequivtf}. In what follows, for a complex manifold~$X$ and $\shm\in\rD^\rb_{\rhol}(\DXS)$ we consider the statement
\[
P_X(\shm):\quad \Rhom_{\DXS}(\shm, \RH^S_X(F))\in\rD^\rb_{\cc}(\pOXS)\ \ \forall F\in\rD^\rb_{\cc}(\pOXS),
\]
in other words, $\shm$ satisfies Theorem \ref{Tequivtf}.

\refstepcounter{theorem}
\begin{lemma}\label{Luisa}
The statement $P$ satisfies the following properties.
\begin{enumerate}
\item \label{a}
For any manifold $X$ and any open covering $(U_i)_{i\in I}$ of $X$,
\[
P_X(\shm)\iff P_{U_i}(\shm_{|U_i})\ \forall i\in I.
\]
\item\label{b} $P_X(\shm)\implies P_X(\shm[n])\ \forall n\in\ZZ$.
\item\label{c}
For any distinguished triangle $\shm'\to \shm\to \shm'' \To{+1}$ in $\rD^{\rb}_{\hol}(\DXS)$,
\[
P_X(\shm')\wedge P_X(\shm'')\implies P_X(\shm).
\]
\item\label{d}
For any regular relative holonomic $\DXS$-modules $\shm$ and $\shm'$,\[
P_X(\shm\oplus\shm')\implies P_X(\shm).
\]
\item \label{e}
For any projective morphism $f:X\to Y$ and any regular holonomic $\DXS$-module $\shm$ which is $f$-good,
\[
P_X(\shm)\implies P_Y(\Df_*\shm).
\]
\item\label{g}
If $\shm=\shh^0(\shm)$ is torsion, then $P_X(\shm)$ is true.
\end{enumerate}
\end{lemma}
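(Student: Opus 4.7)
The plan is to verify items (a)--(g) by standard homological manipulations together with the structural results on $\RH^S_X$ already established in Section \ref{S2} and the torsion case of the Riemann-Hilbert correspondence treated in Proposition \ref{Tequiv1}.

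Items (a)--(d) I would handle as formal properties, relying on the fact that $\rD^\rb_\cc(\pOXS)$ is a triangulated subcategory of $\rD^\rb(\pOXS)$ that is both stable under shifts and thick. For (a), the functors $\Rhom_{\DXS}(-, -)$ and $\RH^S_X(-)$ commute with restriction to open subsets (both being defined by local sheaf-theoretic constructions), and $\C$-constructibility is itself a local property, which gives the equivalence. Item (b) is immediate from the shift isomorphism for $\Rhom$ together with shift-stability of $\rD^\rb_\cc(\pOXS)$. For (c), applying the triangulated functor $\Rhom_{\DXS}(-, \RH^S_X(F))$ to the distinguished triangle in $\shm$ yields a distinguished triangle in $\rD^\rb(\pOXS)$, and the middle term lies in the triangulated subcategory $\rD^\rb_\cc(\pOXS)$ as soon as the outer terms do. Item (d) follows from additivity of $\Rhom$ together with thickness of $\rD^\rb_\cc(\pOXS)$, which forces each direct summand of a $\C$-constructible complex to be $\C$-constructible.

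Item (e) is the substantial step. For a projective $f: X \to Y$ and a good regular holonomic $\DXS$-module $\shm$, Proposition \ref{Dirim} ensures that $\Df_*\shm \in \rD^\rb_\rhol(\DYS)$. I would invoke the proper direct image adjunction for relative $\shd$-modules to obtain a natural isomorphism
\[
\Rhom_{\DYS}(\Df_*\shm, \RH^S_Y(F)) \simeq Rf_*\Rhom_{\DXS}(\shm, \Df^!\RH^S_Y(F)),
\]
where $\Df^!$ denotes the extraordinary pull-back for relative $\shd$-modules. Proposition \ref{KSmorph} then identifies, up to the shift by $d_Y - d_X$, the complex $\Df^!\RH^S_Y(F)$ with $\RH^S_X(f^{-1}F)$, and $f^{-1}F \in \rD^\rb_\cc(\pOXS)$ since pullback preserves $\C$-constructibility. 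The hypothesis $P_X(\shm)$ then places $\Rhom_{\DXS}(\shm, \RH^S_X(f^{-1}F))$ in $\rD^\rb_\cc(\pOXS)$, and since proper direct image preserves relative $\C$-constructibility (the relative analogue of the classical stability result for constructible sheaves, along the lines of \cite[Prop.\,2.20]{MFCS1}), the conclusion $P_Y(\Df_*\shm)$ follows. The main obstacle here will be tracking the cohomological shifts produced by $\Df^!$ versus $\Df^*$ and verifying that the adjunction isomorphism is compatible with the pull-back morphism of Proposition \ref{KSmorph}.

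Item (g) I will derive directly from Proposition \ref{Tequiv1}. Its proof establishes, for $\shm \in \rD^\rb_\rhol(\DXS)_\rt$ and any $G \in \rD^\rb_\rc(\pOXS)$, the natural isomorphism
\[
\Rhom_{\DXS}(\shm, \RH^S_X(G)) \simeq \Rhom_{\pOXS}(G, \pSol_X \shm),
\]
obtained via Proposition \ref{L:tor0} (which makes $\shd_{\XS}\otimes_{\DXS}\shm$ absolutely regular holonomic, so that Kashiwara's absolute-case theorem applies) together with Proposition \ref{P:AF} (reducing $G$ to sheaves of the form $\pOXS \otimes \C_{\Omega \times S}$, for which $\RH^S_X(G)$ is a genuine $\shd_{\XS}$-module). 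When $F \in \rD^\rb_\cc(\pOXS)$, the right-hand side lies in $\rD^\rb_\cc(\pOXS)$ because $\pSol_X\shm \in \rD^\rb_\cc(\pOXS)$ by the results of Section \ref{subsec:solfunctor}, and this category is stable under internal $\Rhom$, yielding $P_X(\shm)$.
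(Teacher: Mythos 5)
Your proposal is correct and follows essentially the same route as the paper's own (very terse) proof. The paper simply states that (a)--(d) are clear, that (e) ``follows by adjunction, Proposition~\ref{KSmorph} and by the stability of $S$-$\C$-constructibility under proper direct image,'' and that (g) ``has been seen in Section~\ref{sec:TRHE}''; you supply the same ingredients, just spelled out. One small caveat on (e): Proposition~\ref{KSmorph} gives control of $\Df^*$, not of an ``extraordinary'' pull-back $\Df^!$, so the adjunction you invoke has to be formulated in terms of the shift $\Df^*[d_X-d_Y]$ (this is the form used in the relative $\shd$-module calculus, after factoring a projective $f$ through a closed embedding into a product with $\mathbb{P}^n$ followed by a smooth projection); you flag this yourself as the ``main obstacle,'' and indeed it is exactly the bookkeeping the paper elides. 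Your treatment of (g) via the isomorphism $\Rhom_{\DXS}(\shm,\RH^S_X(G))\simeq\Rhom_{\pOXS}(G,\pSol_X\shm)$ established inside the proof of Proposition~\ref{Tequiv1} is precisely what the paper's cross-reference to Section~\ref{sec:TRHE} amounts to.
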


\begin{proof}
It is clear that $P_X(\cbbullet)$ satisfies Properties \ref{Luisa}\eqref{a}, \eqref{b}, \eqref{c}, \eqref{d}. Then Property \eqref{e} follows by adjunction, Proposition~\ref{KSmorph} and by the stability of $S$-$\C$-constructibility under proper direct image. Last, Property \eqref{g} has been seen in Section \ref{sec:TRHE}.
\end{proof}

\begin{proof}[End of the proof of Theorem \ref{Tequivtf} (and hence that of Theorem \ref{RHH})]
We wish to prove that $P_X(\shm)$ is true for any $X$ and $\shm\in \rD^\rb_{\rhol}(\DXS)$.

We~proceed by induction on the dimension of $Z_{\shm}$ (\cf\eqref{eq:strictperv2}). If $\dim Z_{\shm}=\nobreak0$, then $P_X(\shm)$ holds true by Kashiwara's equivalence and \ref{Luisa}\eqref{e}, since $P_X(\shm)$ obviously holds if $X$ has dimension zero.

Let us suppose $P_X(\shn)$ true for any $\shn\in \rD^\rb_{\rhol}(\DXS)$ such that $\dim Z_\shn<k$ (with $k\geq 1$) and let us prove the truth of $P_X(\shm)$ for $\shm \in \rD^\rb_{\rhol}(\DXS)$ with $\dim Z_\shm=k$.

By \ref{Luisa}\eqref{b} and \eqref{c}, we are reduced to proving $P_X(\shm)$ in the case where $\shm$ is a regular holonomic $\DXS$-module with $\dim Z_\shm=k$.

Following the notation of Section~\ref{subsec:prelimD}, let $ t(\shm)$ (respectively $f(\shm)$) be the torsion part (respectively the strict quotient) of $\shm$. According to \ref{Luisa}\eqref{c} (applied to the distinguished triangle $t(\shm)\to \shm \to f(\shm) \To{+1}$) and to \ref{Luisa}\eqref{g}, we are reduced to proving $P_X(f(\shm))$. Notice that $\dim Z_{f(\shm)}\leq k$ since $Z_{f(\shm)}\subseteq Z_\shm$. If $\dim Z_{f(\shm)}<k$, $P_X(f(\shm))$ holds true by induction. Hence we are reduced to proving $P_X(\shm)$ in the case where $\shm$ is a strict regular holonomic $\DXS$-module such that $\dim Z_\shm=k$, a property that we now assume to hold. Locally (recall that $P_X(\shm)$ is a local statement by \ref{Luisa}\eqref{a}), there exists a hypersurface $Y$ in~$X$ satisfying the assumptions of Proposition \ref{P:strictperv2}.

On the one hand, it is enough to check the property $P_X(\shm)$ for those $F\in\rD^\rb_{\cc}(\pOXS)$ such that $F=F\otimes\CC_{(X\setminus Y)\times S}$. Indeed, let us check that it holds for those $F$ such that $F=F\otimes\CC_{\YS}$. For any $F\in\rD^\rb_{\cc}(\pOXS)$, the complex $\shn:= \RH^S_X(F\otimes\CC_{\YS})\simeq R\Gamma_{[\YS]}(\RH^S_X(F))$ belongs to $\rD^\rb_{\rhol}(\DXS)$ according to Proposition~\ref{RHSloc}\eqref{RHSloc2}, and we have, by \cite[(3)]{MFCS1},
\[
\Rhom_{\DXS}(\shm,\shn)\simeq \Rhom_{\DXS}(\bD \shn, \bD \shm).
\]
The duality functor preserves $\rD^\rb_{\hol}(\DXS)$ by \cite[Prop.\,2.5]{SS1} and also $\rD^\rb_{\rhol}(\DXS)$ since it does so in the absolute case and $Li_s^*(\bD\shm)\simeq \bD(Li^*_s\shm)$. Let us also notice that $\bD\shm=\shh^0\bD\shm$ is strict holonomic (\cf\cite[Prop.\,2]{MFCS2}). Since $\shn$ has $\DXS$-coherent cohomology and is supported on $\YS$, we have
\[
\Rhom_{\DXS}(\bD\shn, (\bD\shm)(*(\YS)))=0.
\]
Furthermore, $\bD\shm$ being regular holonomic and strict, so is $(\bD\shm)(*(\YS))$ by Proposition \ref{P:strictperv2}, hence $R\Gamma_{[\YS]}(\bD\shm)$ is also regular holonomic, as well as $\shm':=\bD R\Gamma_{[\YS]}(\bD\shm)$. Finally, applying once more \cite[(3) \& (1)]{MFCS1}, we obtain
\[
\Rhom_{\DXS}(\shm,\shn)\simeq\Rhom_{\DXS}(\shm',\shn),
\]
with $\dim Z_{\shh^j\shm'}<k$ for any $j$, so the latter complex is $S$-$\CC$-constructible by the induction hypothesis.

On the other hand, $\shm(*(\YS))$ is regular holonomic, according to Proposition \ref{P:strictperv2}. We can now apply \ref{Luisa}\eqref{c} to the triangle
$
R\Gamma_{[\YS]}(\shm)\to \shm \to \shm(*(\YS))\To{+1}
$
(which is a distinguished triangle in $ \rD^\rb_{\rhol}(\DXS)$). By the induction hypothesis, $P_X(R\Gamma_{[\YS]}(\shm))$ holds true.

We thus assume that $\shm=\shm(*(\YS))$ is strict, and $F=F\otimes\CC_{(X\setminus Y)\times S}$. Let $\pi:X'\to X$ be as in Proposition \ref{P:strictperv2} and set $\delta=\dim X'-\dim X$. Note that the assumption on $F$ entails
\[
\pi^{-1}F=\pi^{-1}F\otimes\C_{(X'\setminus Y')\times S},
\]
while $\Dpi^*\shm[\delta]$ is concentrated in degree zero and is of D-type along~$Y'$. According to Lemma \ref{D-type 5}, $\Rhom_{\DXpS}(\Dpi^*\shm[\delta],\RH^S_{X'}(\pi^{-1}F))$ is an object of $\rD^\rb_{\cc}(p_{X'}^{-1}\sho_S)$, isomorphic to $\Rhom_{\DXpS}(\Dpi^*\shm,\Dpi^*\RH^S_X(F))$ by Proposition \ref{KSmorph}, and thus $R\pi_*$ of the latter is an object of $\rD^\rb_{\cc}(p_{X}^{-1}\sho_S)$. By~adjunction we have (\cf\cite[Th.\,4.33]{Ka2})
\begin{align*}
R\pi_*\Rhom_{\DXpS}(\Dpi^*\shm,\Dpi^*&\RH^S_X(F))\\
&\simeq\Rhom_{\DXS}(\Dpi_*\Dpi^*\shm[\delta],\RH^S_X(F))\\
&\simeq\Rhom_{\DXS}(\shm,\RH^S_X(F)),
\end{align*}
since the adjunction $\Dpi_*\Dpi^*[\delta]\to\id$ is an isomorphism when applied to $\DXS(*(\YS))$-modules. This ends the proof of Theorem \ref{Tequivtf}.
\end{proof}

\subsection{Proof of Corollary \ref{D2}}\label{subsec:proofcorD2}
For any $F\in\rD^\rb_{\cc}(\pOXS)$, we have functorial isomorphisms
\[
\pSol_X \bD(\RH^S_X(F))\simeq \bD\pSol_X (\RH^S_X(F))\simeq \bD F\simeq \pSol_X \RH^S_X(\bD F).
\]
Corollary \ref{D2} then follows by the full faithfulness of the functor $\pSol_X$.\qed

\backmatter
\providecommand{\hal}[1]{\href{https://hal.archives-ouvertes.fr/hal-#1}{\texttt{hal-#1}}}
\providecommand{\bysame}{\leavevmode\hbox to3em{\hrulefill}\thinspace}
\providecommand{\MR}{\relax\ifhmode\unskip\space\fi MR }
\providecommand{\MRhref}[2]{%
  \href{http://www.ams.org/mathscinet-getitem?mr=#1}{#2}
}
\providecommand{\href}[2]{#2}

\end{document}